\newtheorem{theorem}{Theorem}
\theoremstyle{plain}
\newtheorem{corollary}{Corollary}
\newtheorem{definition}{Definition}
\newtheorem{lemma}{Lemma}
\newtheorem{notation}{Notation}
\newtheorem{proposition}{Proposition}
\numberwithin{equation}{section}
\theoremstyle{definition}
\newtheorem{example}{Example}
\newtheorem{remark}{Remark}
\DeclareMathOperator{\supp}{supp}
\DeclareMathOperator*{\esssup}{ess\,sup}
\DeclareMathOperator*{\essinf}{ess\,inf}
\begin{document}
\title[Dimensions of random Moran measures]{Assouad-like dimensions of a
class of random Moran measures II -- non-homogeneous Moran sets}
\author{Kathryn E. Hare}
\address{Dept. of Pure Mathematics, University of Waterloo, Waterloo, Ont.,
Canada, N2L 3G1}
\email{kehare@uwaterloo.ca}
\author{Franklin Mendivil}
\address{Department of Mathematics and Statistics, Acadia University,
Wolfville, N.S. Canada, B4P 2R6}
\email{franklin.mendivil@acadiau.ca}
\thanks{The research of K. Hare is partially supported by NSERC 2016:03719.
The research of F. Mendivil is partially supported by NSERC\ 2019:05237.}
\subjclass[2010]{Primary: 28A80; Secondary 28C15, 60G57}
\keywords{random non-homogeneous Moran sets, 1-variable fractals, random
measures, Assouad dimensions, quasi-Assouad dimensions}

\begin{abstract}
In this paper, we determine the almost sure values of the $\Phi$-dimensions
of random measures $\mu$ supported on random Moran sets in ${\mathbb{R}}^d$
that satisfy a uniform separation condition. This paper generalizes earlier
work done on random measures on homogeneous Moran sets \cite{HM} to the case
of unequal scaling factors. The $\Phi$-dimensions are intermediate
Assouad-like dimensions with the (quasi-)Assouad dimensions and the $\theta$%
-Assouad spectrum being special cases.

The almost sure value of $\dim_\Phi \mu$ exhibits a threshold phenomena,
with one value for ``large'' $\Phi$ (with the quasi-Assouad dimension as an
example of a ``large'' dimension) and another for ``small'' $\Phi$ (with the
Assouad dimension as an example of a ``small'' dimension). We give many
applications, including both where the scaling factors are fixed and the
probabilities are uniformly distributed and also where the probabilities are
fixed and the scaling factors are uniformly distributed. 
\end{abstract}

\maketitle



\section{Introduction}

A dimension provides a way of quantifying the size of a set. In the context
of subsets of a metric space, there are many different dimensions that have
been defined and each describes slightly different geometric properties of
the subset. Two well-known examples of this are the Hausdorff and
box-counting dimensions, which are both global measures of the geometry of
the given subset. It is also of substantial interest to understand the local
variation in the geometry and for this other dimensions have been introduced
including the (upper and lower) Assouad dimensions and variations. The
Assouad dimensions \cite{A2,Fraserbook,KLV,L1}, the less extreme
quasi-Assouad dimensions \cite{CDW,GH,LX}, the $\theta $-Assouad spectrum 
\cite{FY}, and (the most general of these) the intermediate Assouad-like $%
\Phi $-dimensions \cite{FY,GHM} all quantify various aspects of the
\textquotedblleft thickest\textquotedblright\ and \textquotedblleft
thinnest\textquotedblright\ parts of the set. These same Assouad-like
dimensions are all also available to quantify Borel measures on metric
spaces \cite{FH,HH,HHT}.

The $\Phi$-dimensions range between the box dimensions and the Assouad
dimensions and are also locally defined. However, they differ in the depth
of scales that they consider and thus can provide precise information about
the set or measure (see Section \ref{sec:phidim} for definitions). In this
paper we extend the investigation of the $\Phi$-dimensions of random $1$%
-variable measures on homogeneous Moran sets \cite{HM} to the case of random
measures supported on random Moran sets with multiple scaling factors for
the similarities.

The study of the dimensional properties of random fractal objects is
well-established, with some early papers investigating the almost sure
Hausdorff dimension \cite{Fa, Gr}, while more recently the Assouad and
related dimensions have also been investigated \cite{FMT,FT,GHM2,HM,Tr,Tr2}.

By a random Moran measure we mean a random Borel probability measure
supported on a random Moran construction in ${\mathbb{R}}^D$ (see Section %
\ref{sec:construction} for the precise details of the construction); our
construction can also be described as a random $1$-variable fractal measure.
The support of the measure is constructed by a random iterative procedure,
where at each stage we replace each component of the set with a random (but
uniformly bounded) number of randomly scaled, separated, compact, and
similar subsets. A random Borel probability measure is then defined on the
random limiting set by a similar iterative process which subdivides the
total mass by randomly choosing a set of probabilities at each step. The
process produces a $1$-variable fractal measure since at each level we make
one random choice and use that same choice for all subdivisions on that
level. Specifically, at level $n$ we choose $K_n$ random geometric scaling
factors for the similarities and $K_n$ random probabilities to use in
subdividing the mass and use these $2 K_n$ choices for every subdivision at
that level. This is in contrast with the stochastically self-similar (or $%
\infty$-variable) construction where the choice is made independently for
each subdivision. We make a blanket separation assumption which can be
thought of as a uniform strong convex separation condition.

For any dimension function $\Phi $, the $\Phi $-dimension of the resulting
random measure $\mu _{\omega }$ is almost surely constant and this value
depends on how $\Phi $ compares to the threshold function $\Psi (t)=\log
|\log t|/|\log t|$ near $0$; this behavior is similar to what was seen in 
\cite{GHM2,HM,Tr2}. For $\Phi \ll \Psi $\footnote{%
For $f,g>0,$ we will write $g\ll f$ if there is a function $A$ and $\delta >0
$ such that $f(t)\geq A(t)g(t)$ for all $0<t<\delta $ and $A(t)\rightarrow
\infty $ as $t\rightarrow 0^{+}$.} (the \textquotedblleft
small\textquotedblright\ dimension functions $\Phi $, such as the Assouad
dimension), the computations of the almost sure values of the upper and
lower $\Phi $-dimensions of the random measure $\mu _{\omega }$ are quite
similar to the homogeneous (same scaling factor for all children) case dealt
with in \cite{HM}. These computations involve the essential supremum (or
essential infimum) of ratios of the logarithm of a probability to the
logarithm of a scaling factor (see Section \ref{sec:dimthm_small}).
Furthermore, the almost sure value of the $\Phi $-dimension is the same for
all small dimension functions. It is natural to ask if there is a choice of
probabilities such the almost sure $\Phi $-dimension of the associated
random measures coincides with the almost sure $\Phi $-dimension of the
underlying sets, as is true in the homogenous case. In Section \ref%
{sec:small_dim_underlying} we show that this need not be true in the more
general situation.

In contrast, for $\Phi \gg \Psi $ (the \textquotedblleft
large\textquotedblright\ dimension functions, such as the quasi-Assouad
dimension), the computations are significantly different in the current
situation of different scaling factors. Roughly, the reason for this is that
the choice of the extremal branch down the tree of subdivisions depends on
what exponent (dimension) one thinks is the correct one. Thus the
computation of the $\Phi $-dimension involves solving an equation of the
form $G(\theta )=\theta $ to find the correct exponent. The function $G$ is
a ratio of expected values of logarithms of probabilities to logarithms of
scaling ratios (see Section \ref{subsec:mainthm} for details). Again, the
almost sure value of the $\Phi $-dimension is the same for all large
dimension functions. One special case we examine carefully is when the set
is deterministic with two scaling ratios, $a$ and $b$, and the probabilities
are uniformly chosen. Setting $a=b^{\gamma }$, the dimension is the root, $%
\theta $, of $b^{\theta }+b^{\gamma \theta }=e^{-1}$. Notice that this is a
polynomial in $b^{\theta }$ if $\gamma $ is an integer. It is interesting to
note that the dimension of the support (the Cantor-like set) in this case is
the root of $b^{\theta }+b^{\gamma \theta }=1$. Another special case we
examine is again when the set is deterministic, but now the
\textquotedblleft left\textquotedblright\ probability is chosen randomly
from the two possibilities $p$ or $1-p$ (for a fixed value of $p$). In this
case the almost sure $\Phi $-dimension of $\mu _{\omega }$ is given
explicitly as one of two values where the one to use depends on the
relationship between $a$ and $b$ and also between $p$ and $1-p$. All of
these examples are discussed in Section \ref{subsec:Cab}. It is an open
problem if the probabilities can be chosen so that the almost sure $\Phi $%
-dimension of the random measures coincides with that of the random sets. 

The definition and basic properties of the $\Phi$-dimensions are given in
Section \ref{sec:phidim} and the details of the random construction are
given in Section \ref{sec:construction}. Section \ref{sec:LargePhi} contains
our results for large $\Phi$ and Section \ref{sec:SmallPhi} those for small $%
\Phi$.

We present most of our discussion in the context of random subsets of ${%
\mathbb{R}}$ where at each stage we split each component into two
``children''. This is done for simplicity of exposition only and in Section %
\ref{sec:multi} we briefly indicate what changes are necessary to
accommodate random subsets of ${\mathbb{R}}^D$ with a random (but uniformly
bounded) number of children at each level.

It is important to note that we always assume that the scaling ratios are
uniformly bounded away from $0$. It is certainly possible to remove this
assumption, but this seems to require some delicate technical arguments and
we leave this case for future work.

\section{Assouad-like dimensions}

\label{sec:phidim}

There are many ways to quantify the `size' of subsets of metric spaces and
Borel probability measures on these metric spaces. The so-called $\Phi $%
-dimensions provide refined information on the local size of a set or
concentration of a measure. To define these, we first recall some standard
notation and define what we mean by a dimension function.

\begin{notation}
We will write$\ B(x,R)$ for the open ball centred at $x$ belonging to the
bounded metric space $X$ and radius $R$. By $N_{r}(E)$ we mean the least
number of open balls of radius $r$ required to cover $E\subseteq X$.
\end{notation}

\begin{definition}
A \textbf{dimension function} is a map $\Phi :(0,1)\rightarrow \mathbb{R}%
^{+} $ with the property that $\Psi(t) = t^{1+\Phi (t)}$ decreases to $0$ as $t$
decreases to $0$ and is  doubling, that is, there are $C_1, C_2 > 0$ so that 
$\Psi(2 t) \le C_1 \Psi(t)$ for all $0 < t < C_2$.
\end{definition}

Examples include the constant functions $\Phi (t)=\delta \geq 0,$ the
function $\Phi (t)=1/|\log t|$ and the function $\Phi (t)=\log \left\vert
\log t\right\vert /\left\vert \log t\right\vert $. The latter will be of
particular interest in this paper.

\begin{definition}
We will say that a dimension function $\Phi $ is \textbf{large} if 
\begin{equation*}
\Phi (t)=H(t)\frac{\log \left\vert \log t\right\vert }{\left\vert \log
t\right\vert }
\end{equation*}
where $H(t)\rightarrow \infty $ as $t\rightarrow 0$ and \textbf{small }if
(with the same notation) $H(t)\rightarrow 0$ as $t\rightarrow 0$.
\end{definition}

\begin{definition}
Let $\mu $ be a measure on $X$ and $\Phi $ be a dimension function. The 
\textbf{upper and lower }$\Phi $\textbf{-dimensions} of $\mu $ are given,
respectively, by%
\begin{equation*}
\overline{\dim }_{\Phi }\mu =\inf \left\{ 
\begin{array}{c}
d:(\exists C_{1},C_{2}>0)(\forall 0<r<R^{1+\Phi (R)}\leq R\leq C_{1}) \\ 
\frac{\mu (B(x,R))}{\mu (B(x,r))}\leq C_{2}\left( \frac{R}{r}\right) ^{d}%
\text{ }\forall x\in \supp\mu%
\end{array}%
\right\}
\end{equation*}%
and 
\begin{equation*}
\underline{\dim }_{\Phi }\mu =\sup \left\{ 
\begin{array}{c}
d:(\exists C_{1},C_{2}>0)(\forall 0<r<R^{1+\Phi (R)}\leq R\leq C_{1}) \\ 
\frac{\mu (B(x,R))}{\mu (B(x,r))}\geq C_{2}\left( \frac{R}{r}\right) ^{d}%
\text{ }\forall x\in \supp \mu%
\end{array}%
\right\} .
\end{equation*}
\end{definition}

These dimensions were introduced in \cite{HH} and were motivated, in part,
by the $\Phi $-dimensions of sets, introduced in \cite{FY} and thoroughly
studied in \cite{GHM}. We recall the definition.

\begin{definition}
The \textbf{upper }and \textbf{lower }$\Phi $\textbf{-dimensions\ }of $%
E\subseteq X$ are given, respectively, by 
\begin{equation*}
\overline{\dim }_{\Phi }E=\inf \left\{ 
\begin{array}{c}
d:(\exists C_{1},C_{2}>0)(\forall 0<r\leq R^{1+\Phi (R)} < R<C_{1})\text{ }
\\ 
N_{r}(B(z,R)\bigcap E)\leq C_{2}\left( \frac{R}{r}\right) ^{d}\text{ }%
\forall z\in E%
\end{array}%
\right\}
\end{equation*}%
and%
\begin{equation*}
\underline{\dim }_{\Phi }E=\sup \left\{ 
\begin{array}{c}
d:(\exists C_{1},C_{2}>0)(\forall 0<r\leq R^{1+\Phi (R)} < R<C_{1})\text{ }
\\ 
N_{r}(B(z,R)\bigcap E)\geq C_{2}\left( \frac{R}{r}\right) ^{d}\text{ }%
\forall z\in E%
\end{array}%
\right\} .
\end{equation*}
\end{definition}

\begin{remark}
(i) In the special case of $\Phi =0$, these dimensions are known as the 
\textbf{upper }and \textbf{lower Assouad dimensions }of the measure or set.
For measures, these dimensions are also known as the upper and lower
regularity dimensions and were studied by K\"{a}enm\"{a}ki et al in \cite%
{KL, KLV} and Fraser and Howroyd in \cite{FH}. The upper and lower Assouad
dimensions of the measure $\mu $ are denoted $\dim _{A}\mu $ and $\dim
_{L}\mu $ respectively and are important because the measure $\mu $ is
doubling if and only if $\dim _{A}\mu $ $<\infty $ (\cite{FH}) and uniformly
perfect if and only if $\dim _{L}\mu >0$ (\cite{KL}).

(ii) If we put $\Phi _{\theta }=1/\theta -1$ for $0<\theta <1,$ then $%
\overline{\dim }_{\Phi _{\theta }}\mu $ and \underline{$\dim $}$_{\Phi
_{\theta }}\mu $ are (basically) the \textbf{upper} and \textbf{lower $%
\theta $-Assouad spectrum} introduced in \cite{FY}. The \textbf{upper} and 
\textbf{lower quasi-Assouad dimensions} of $\mu ,$ developed in \cite{HHT,
HT}, are given by 
\begin{equation*}
\dim _{qA}\mu =\lim_{\theta \rightarrow 1}\overline{\dim }_{\Phi _{\theta
}}\mu \text{ and }\dim _{qL}\mu =\lim_{\theta \rightarrow 1}\underline{\dim }%
_{\Phi _{\theta }}\mu .
\end{equation*}
\end{remark}

Here are some basic relationships between these dimensions; for proofs see 
\cite{FY}, \cite{GHM}, \cite{HH} and the references cited there.

\begin{proposition}
{\ } Let $\Phi ,\Psi $ be dimension functions and $\mu $ be a measure.

(i) If $\Phi (t)\leq \Psi (t)$ for all $t>0$, then $\overline{\dim }_{\Psi
}\mu $ $\leq \overline{\dim }_{\Phi }\mu $ and $\underline{\dim }_{\Phi }\mu
\leq \underline{\dim }_{\Psi }\mu $.

(ii) We have that 
\begin{equation*}
\dim _{A}\mu \geq \overline{\dim }_{\Phi }\mu \geq \overline{\dim }_{\Phi }\ %
\supp\mu \geq \dim _{H}\supp \mu
\end{equation*}
and $\dim _{L}\mu \leq \underline{\dim }_{\Phi }\mu $. If $\mu $ is
doubling, then $\underline{\dim }_{\Phi }\mu \leq \underline{\dim }_{\Phi }%
\supp \mu . $

(iii) If $\Phi (t)\rightarrow 0$ as $t\rightarrow 0,$ then $\underline{\dim }%
_{\Phi }\mu \leq \dim _{qL}\mu $ and $\dim _{qA}\mu \leq \overline{\dim }%
_{\Phi }\mu $ .

(iv) If $\Phi (t)\leq 1/\left\vert \log t\right\vert $ for $t$ near $0$,
then $\overline{\dim }_{\Phi }\mu =\dim _{A}\mu $ and $\underline{\dim }%
_{\Phi }\mu =\dim _{L}\mu $.

(v) For any set $E,$ 
\begin{equation*}
\dim _{L}E\leq \underline{\dim }_{\Phi }E\leq \underline{\dim }_{B}E\leq 
\overline{\dim }_{B}E\leq \overline{\dim }_{\Phi }E\leq \dim _{A}E.
\end{equation*}%
(Here $\underline{\dim }_{B}$ and $\overline{\dim }_{B}$ are the lower and
upper box dimensions.)
\end{proposition}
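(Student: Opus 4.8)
The plan is to make part (i) the engine of the whole proposition and to derive most of the remaining statements from it by recognizing the classical dimensions as $\Phi$-dimensions for distinguished choices of $\Phi$. For (i) I would argue straight from the definitions: since $0<R<1$ and $\Phi(R)\le\Psi(R)$ we have $R^{1+\Psi(R)}\le R^{1+\Phi(R)}$, so every pair $(r,R)$ admissible for $\Psi$ (those with $0<r<R^{1+\Psi(R)}$) is also admissible for $\Phi$. Consequently any $d$ witnessing the upper ratio bound defining $\overline{\dim}_\Phi\mu$ over the larger admissible range also works over the smaller range of $\Psi$; taking infima gives $\overline{\dim}_\Psi\mu\le\overline{\dim}_\Phi\mu$, and the supremum version gives $\underline{\dim}_\Phi\mu\le\underline{\dim}_\Psi\mu$. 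The one point needing care is that the defining conditions only restrict scales $R\le C_1$, so it in fact suffices to assume $\Phi\le\Psi$ for $t$ near $0$; I would record this \emph{germ-at-$0$} observation explicitly, since parts (iii) and (iv) supply comparison hypotheses only for small $t$.

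Granting (i) and this observation, several inequalities are immediate. The constant function $0$ is a dimension function with $\overline{\dim}_0\mu=\dim_A\mu$ and $\underline{\dim}_0\mu=\dim_L\mu$, and every dimension function obeys $\Phi\ge0$; hence (i) yields the outer bounds $\overline{\dim}_\Phi\mu\le\dim_A\mu$ and $\dim_L\mu\le\underline{\dim}_\Phi\mu$ in (ii), as well as $\overline{\dim}_\Phi E\le\dim_A E$ and $\dim_L E\le\underline{\dim}_\Phi E$ in (v). For (iii), the $\theta$-Assouad spectrum is $\overline{\dim}_{\Phi_\theta}$ with the constant $\Phi_\theta=1/\theta-1$: if $\Phi(t)\to0$ then for each fixed $\theta<1$ we have $\Phi\le\Phi_\theta$ near $0$, so $\overline{\dim}_{\Phi_\theta}\mu\le\overline{\dim}_\Phi\mu$, and letting $\theta\to1$ gives $\dim_{qA}\mu\le\overline{\dim}_\Phi\mu$ (and dually $\underline{\dim}_\Phi\mu\le\dim_{qL}\mu$). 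For (iv), the key elementary estimate is that $\Phi(R)\le1/|\log R|$ forces $R^{\Phi(R)}=e^{-\Phi(R)|\log R|}\ge e^{-1}$, whence $R^{1+\Phi(R)}\ge R/e$; thus the admissible range already contains every $r<R/e$, and the only scales the Assouad condition sees but the $\Phi$-condition omits are $R/e\le r<R$, contributing a bounded factor $R/r<e$ that is absorbed into $C_2$. Combined with $\overline{\dim}_\Phi\mu\le\dim_A\mu$ this gives equality, and similarly for the lower dimensions.

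The genuinely set/measure-theoretic content lies in the box-dimension comparisons of (v) and the support comparisons of (ii). The box comparisons are the easier: fixing the coarse scale at a single value $R=C_1$ and covering $E$ by a bounded number of $R$-balls centred in $E$, the $\Phi$-covering bound in each yields $N_r(E)\le C\,r^{-d}$ for all small $r$, so $\overline{\dim}_B E\le\overline{\dim}_\Phi E$; dually $N_r(E)\ge N_r(B(z,R)\cap E)$ with $R=C_1$ gives $\underline{\dim}_\Phi E\le\underline{\dim}_B E$, while $\underline{\dim}_B E\le\overline{\dim}_B E$ is classical. Here no scale ratio shrinks, so there is no doubling issue. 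Together with the classical $\dim_H\le\overline{\dim}_B$ this also produces the remaining inequality $\overline{\dim}_\Phi\,\mathrm{supp}\,\mu\ge\dim_H\,\mathrm{supp}\,\mu$ of (ii).

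The step I expect to be the main obstacle is the transfer between the measure $\mu$ and its support in (ii): the unconditional bound $\overline{\dim}_\Phi\,\mathrm{supp}\,\mu\le\overline{\dim}_\Phi\mu$ and, under the doubling hypothesis, $\underline{\dim}_\Phi\mu\le\underline{\dim}_\Phi\,\mathrm{supp}\,\mu$. The natural route is a packing argument: for $z\in\mathrm{supp}\,\mu$ and admissible $(r,R)$ one takes a maximal $r$-separated set $\{x_i\}$ in $B(z,R)\cap\mathrm{supp}\,\mu$, so that $N_r(B(z,R)\cap\mathrm{supp}\,\mu)$ is comparable to the number of $x_i$ while the balls $B(x_i,r/2)$ are disjoint, then lower-bounds each $\mu(B(x_i,r/2))$ through the measure ratio bound applied at the centre $x_i$ and sums over $i$. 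The delicate point, and the reason this is more than bookkeeping, is that aligning the large scale of the measure-ratio condition with the region $B(z,R)$ produces a coarse factor of the form $\mu(B(z,cR))/\mu(B(z,R))$ with $c>1$; for large $\Phi$ such a factor is \emph{not} controlled by the $\Phi$-condition (indeed it is essentially a local doubling ratio, which may be unbounded when $\mu$ is non-doubling), so the estimate must be organised so that these coarse factors either cancel or are otherwise circumvented, and it is exactly here that the doubling hypothesis enters to drive the reverse estimate for the lower dimension. I would therefore regard this measure-to-support transfer, rather than the reductions to (i), as the technical heart of the argument.
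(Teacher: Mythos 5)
The first thing to say is that the paper does not prove this proposition at all: it is presented as a summary of known facts, with proofs deferred to \cite{FY}, \cite{GHM} and \cite{HH}. So your proposal cannot match or diverge from a ``paper route''; it can only be measured against the standard arguments in that literature, and for the most part it reproduces them correctly. Part (i) by monotonicity of the admissible scale range, together with the germ-at-$0$ refinement (which is indeed exactly what (iii) and (iv) require); (iii) by comparison with the constants $\Phi_\theta = 1/\theta-1$; (iv) from the observation $R^{\Phi(R)} = e^{-\Phi(R)|\log R|} \ge e^{-1}$, so that the pairs omitted by the $\Phi$-condition satisfy $R/r \le e$ and contribute only a constant absorbed into $C_2$; and the box-dimension bounds in (v) by freezing the coarse scale at a fixed $R<C_1$. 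All of these are sound.

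The one genuine gap is the step you flag but do not resolve: the unconditional inequality $\overline{\dim}_\Phi\,\mathrm{supp}\,\mu \le \overline{\dim}_\Phi\,\mu$. With the assembly you describe (lower-bounding each $\mu(B(x_i,r/2))$ against a ball centred at $z$ and summing), the uncontrolled factor $\mu(B(z,cR))/\mu(B(z,R))$ really does appear, and saying it ``must be circumvented'' is not yet a proof. The circumvention is to never change centres: let $\{x_1,\dots,x_M\}$ be a maximal $r$-separated subset of $B(z,R)\cap\mathrm{supp}\,\mu$, so that $N_r(B(z,R)\cap\mathrm{supp}\,\mu)\le M$, and let $i_0$ minimize $\mu(B(x_i,r/2))$. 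Disjointness of the balls $B(x_i,r/2)$ inside $B(z,2R)$ and the containment $B(z,2R)\subseteq B(x_{i_0},3R)$ give
\begin{equation*}
M\,\mu(B(x_{i_0},r/2)) \;\le\; \sum_{i=1}^{M}\mu(B(x_i,r/2)) \;\le\; \mu(B(z,2R)) \;\le\; \mu(B(x_{i_0},3R)),
\end{equation*}
hence $M \le \mu(B(x_{i_0},3R))/\mu(B(x_{i_0},r/2)) \le C_2\,6^d\,(R/r)^d$, the last step being the measure-ratio bound applied at the single centre $x_{i_0}$; no doubling-type factor ever arises because numerator and denominator share that centre. Note also that the scale bookkeeping works in your favour here: $r/2 < r \le R^{1+\Phi(R)} \le (3R)^{1+\Phi(3R)}$ by monotonicity of $t^{1+\Phi(t)}$, so the pair $(r/2,3R)$ is admissible after shrinking $C_1$. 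In the lower-dimension transfer the recentred radius doubles rather than halves, and the resulting comparisons (for instance $\mu(B(z,R))$ against $\mu(B(y_{i_0},2R))\le\mu(B(z,3R))$, and the admissibility of the pair $(2r,\cdot)$ when $r$ sits at the top of the admissible range) are precisely doubling estimates; this confirms your diagnosis that the doubling hypothesis enters there, and only there.
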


\section{Random Moran sets and Measures}

\label{sec:construction}

\subsection{Definition of random Moran sets $\mathcal{C}_{\protect\omega }$
and random measures $\protect\mu_\protect\omega$}

For the majority of this paper we describe our results in the simple context
of subsets of $[0,1]$ with two ``children'' at each ``level''. We do this
for clarity and to highlight the important features of the construction.
However, in Section \ref{sec:multi} we briefly indicate the natural
extension to compact subsets of ${\mathbb{R}}^D$ with an arbitrary (but
uniformly bounded) number of children at each level. All of our proofs are
given so that they can be easily modified for the more general situation.

Let $(\Omega ,\mathcal{P})$ be a probability space. Fix $0< 2A \le B<1$ and
choose independently and identically distributed random variables 
\begin{equation*}
(a_{n}(\omega ),b_{n}(\omega ), p_n(\omega))\in \{(x,y,z)\in [
0,1]^{3}:A\leq \min (x,y)<x+y\leq B\}.
\end{equation*}%
We assume that $\mathbb{E}(e^{-t\log p_{n}}) = \mathbb{E}( p_n^{-t}) <\infty 
$ and $\mathbb{E}(e^{-t\log (1-p_{n})})= \mathbb{E}( (1-p)^{-t}) < \infty$
for some $t>0$. Note that this implies that the probability that $p_n = 0$
or $p_n = 1$ is zero. Note also that since $A > 0$, we have $\mathbb{E}%
(e^{-t\log a_{n}})=\mathbb{E}(a_{n}^{-t})<\infty $ and $\mathbb{E}(e^{-t\log
b_{n}}) = \mathbb{E}(b_n^{-t})<\infty $ for all $t>0$.

Let $L$ denote the minimal positive integer such that 
\begin{equation}
2B^{L-1}\leq 1-B.  \label{Ldefn}
\end{equation}

To create the random Moran set, $\mathcal{C}_{\omega },$ we begin with the
closed interval $[0,1]$ and then at step one form the set $\mathcal{C}%
_{\omega }^{(1)}$ by keeping the outer-most left subinterval of length $%
a_{1}(\omega )$ and the outer-most right subinterval of length $b_{1}(\omega
)$. Having inductively created $\mathcal{C}_{\omega }^{(n-1)},$ a union of $%
2^{n-1}$ closed intervals $\{I_{j}(\omega )\}_{j=1}^{2^{n-1}}$ (which we
call the Moran intervals of step (or level) $n-1$), we let $\mathcal{C}%
_{\omega }^{(n)}=\bigcup_{j=1}^{2^{n-1}}(I_{j}^{(1)}\cup I_{j}^{(2)})$ where 
$I_{j}^{(1)}=I_{j}^{(1)}(\omega )$ is the outer-most left closed subinterval
of $I_{j}=I_{j}(\omega )$ of length $|I_{j}^{(1)}|=a_{n}(\omega )|I_{j}|$
and $I_{j}^{(2)}=I_{j}^{(2)}(\omega )$ is the outer-most right closed
subinterval of $I_{j}$ of length $|I_{j}^{(2)}|=b_{n}(\omega )|I_{j}|$. We
call $I_{j}^{(1)}$ the left child of $I_{j}$ and $I_{j}^{(2)},$ the right
child. The \textbf{random Moran set }$\mathcal{C}_{\omega }$ is the compact
set 
\begin{equation*}
\mathcal{C}_{\omega }=\bigcap_{n=1}^{\infty }\mathcal{C}_{\omega }^{(n)}.
\end{equation*}

It can be convenient to label the Moran intervals of step $N$ as $%
I_{v_{1}\cdot \cdot \cdot v_{N}}$ with $v_{j}\in \{0,1\},$ where $%
I_{v_{1}\cdot \cdot \cdot v_{N-1}0}$ is the left child of $I_{v_{1}\cdot
\cdot \cdot v_{N-1}}$ and $I_{v_{1}\cdot \cdot \cdot v_{N-1}1}$ is the right
child. When we write $I_{N}(x)$ we mean the Moran interval of step $N$
containing the element $x\in \mathcal{C}_{\omega }$.

Notice that any Moran interval of step $N$ has length between $A^{N}$ and $%
B^{N}$ and 
\begin{equation*}
A^{k}\leq \frac{\left\vert I_{N+k}(x)\right\vert }{\left\vert
I_{N}(x)\right\vert }\leq B^{k}
\end{equation*}%
for any $N,x$. In particular this means that none of the intervals disappear.

The \textbf{random measure} $\mu _{\omega }$ is defined by the rule that $%
\mu _{\omega }([0,1])=1$ and if $I_{N}$ is a Moran interval of step $N,$
then (with the notation as above) 
\begin{equation*}
\mu _{\omega }(I_{N}^{(1)})=p_{N+1}(\omega )\mu _{\omega }(I_{n})\text{ and }%
\mu _{\omega }(I_{N}^{(2)})=(1-p_{N+1}(\omega ))\mu _{\omega }(I_{N}).
\end{equation*}%
For each $\omega $, this uniquely determines a probability measure on $%
\mathcal{C}_{\omega }$. In addition, for almost all $\omega $ the support is
all of $\mathcal{C}_{\omega }$. For those familiar with $V$-variable
fractals (see \cite{Betc}), we mention that our construction produces a
random $1$-variable fractal measure. Our entire random model can also be
viewed as sampling from the product space 
\begin{equation*}
\Omega = \prod_{n=1}^\infty \biggl ( \{ (x,y,z) : A \le \min\{x,y\} \le x+y
\le B, 0 \le z \le 1 \} \biggr ),
\end{equation*}
where we use the product measure on $\Omega$ induced by a given probability
measure on each factor.

Notice that we allow the possibility that $a_n$, $b_n$ and $p_n$ can be
dependent or independent of each other; we only assume that $(a_n,b_n,p_n)$
is independent of $(a_m, b_m, p_m)$ when $n \ne m$.

We remark that $\mathcal{C}_{\omega }$ has a \textquotedblleft uniform
separation\textquotedblright\ property in the sense that the distance
between the two children of $I_{N}$ is at least $(1-B)|I_{N}|$. This fact
allows us to prove the following simple, but useful, relationship between
Moran intervals of various levels and balls.

\begin{lemma}
\label{L1} Given $\omega \in \Omega ,$ $x\in \mathcal{C}_{\omega }$ and $%
0<R<1,$ choose the integer $N=N(\omega ,x)$ such that $\left\vert
I_{N}(x)\right\vert \leq R<\left\vert I_{N-1}(x)\right\vert $. Then 
\begin{equation*}
I_{N}(x)\cap \mathcal{C}_{\omega }\subseteq B(x,R)\cap \mathcal{C}_{\omega
}\subseteq I_{N-L}(x)\cap \mathcal{C}_{\omega }\text{.}
\end{equation*}
\end{lemma}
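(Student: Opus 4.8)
The plan is to establish the two inclusions separately; both reduce to elementary estimates on interval lengths, the second one also invoking the uniform separation property. For the left inclusion $I_{N}(x)\cap C_{\omega }\subseteq B(x,R)\cap C_{\omega }$, I would take any $y\in I_{N}(x)\cap C_{\omega }$ and note that $x\in I_{N}(x)$ as well, so both points lie in the single step-$N$ interval $I_{N}(x)$, whose length is at most $R$ by the defining choice of $N$. Hence $|x-y|\le |I_{N}(x)|\le R$, which places $y$ in the ball of radius $R$ about $x$ (the open/closed distinction at the boundary is immaterial here). This direction is immediate and uses nothing beyond the definition of $N$.

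The right inclusion $B(x,R)\cap C_{\omega }\subseteq I_{N-L}(x)\cap C_{\omega }$ carries the real content, and I would argue by contraposition. (If $N\le L$ then $I_{N-L}(x)$ is all of $[0,1]$ and the inclusion is trivial, so assume $N>L$.) Given $y\in C_{\omega }$ with $y\notin I_{N-L}(x)$, let $m$ be the last level at which $x$ and $y$ lie in a common step-$m$ Moran set $I_{m}$; then they lie in its two distinct children, and since they have already separated by level $N-L$, we have $m\le N-L-1$. The uniform separation property states that the two children of $I_{m}$ are separated by a gap of length at least $(1-B)|I_{m}|$, and since $x$ and $y$ sit in different children, $|x-y|\ge (1-B)|I_{m}|$.

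It then remains to convert this into a comparison with $R$. Since $m\le N-L-1$, the interval $I_{m}$ is an ancestor of $I_{N-1}(x)$ lying at least $L$ levels above it, so $|I_{N-1}(x)|\le B^{(N-1)-m}|I_{m}|\le B^{L}|I_{m}|$, giving $|I_{m}|\ge B^{-L}|I_{N-1}(x)|$. The definition of $L$ yields $B^{L}=B\cdot B^{L-1}\le B(1-B)/2<1-B$, hence $(1-B)B^{-L}\ge 1$, and therefore $|x-y|\ge (1-B)|I_{m}|\ge |I_{N-1}(x)|>R$, the final inequality again coming from the choice of $N$. Thus $y\notin B(x,R)$, which is exactly the claimed inclusion. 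The main obstacle — really the only subtle point — is correctly locating the separation level $m$ and verifying that it lies at least $L$ levels above $N-1$, so that the factor $B^{-L}$ guaranteed by the definition of $L$ is large enough to absorb the separation constant $1-B$ and overtake $R$.
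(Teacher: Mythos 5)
Your proof is correct and follows essentially the same route as the paper's: both locate the last common ancestor at which branching occurs, apply the uniform separation property (a gap of at least $(1-B)|I_m|$ between children), and combine the scaling bound $|I_{N-1}(x)|\le B^{L}|I_m|$ with the defining inequality $2B^{L-1}\le 1-B$ for $L$. The only cosmetic difference is that you argue pointwise by contraposition (showing $|x-y|>R$ for $y\in C_\omega\setminus I_{N-L}(x)$), whereas the paper argues by contradiction on step-$N$ Moran sets meeting $B(x,R)$, bounding their mutual distance by $2R$.
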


\begin{proof}
The proof is similar to \cite[Lemma 1]{HM}, but we include it here for
completeness. Obviously, $I_{N}(x)$ is contained in $B(x,R)$.

Assume $I_{N}^{\prime }$ is another Moran interval of step $N$ which
intersects $B(x,R)$ and suppose $I_{N-k}(x)$ is the common ancestor of $%
I_{N}(x)$ and $I_{N}^{\prime }$ with $k$ minimal. Then the two level $N$
intervals $I_{N}(x)$ and $I_{N}^{\prime }$ must be separated by a distance
of at least $|I_{N-k}(x)|(1-B)$ and at most $2R$. If $k\geq L,$ the
definition of $L$ gives%
\begin{eqnarray*}
|I_{N-k}(x)|(1-B) &\leq &2R<2|I_{N-1}(x)|\leq 2B^{k-1}|I_{N-k}(x)| \\
&\leq &2B^{L-1}\left\vert I_{N-k}(x)\right\vert \leq |I_{N-k}(x)|(1-B),
\end{eqnarray*}%
which is a contradiction. Hence, all step $N$ Moran intervals intersecting $%
B(x,R)$ are contained in $I_{N-L}(x)$ and that implies $B(x,R)\bigcap 
\mathcal{C}_{\omega }\subseteq I_{N-L}(x)$.
\end{proof}

Our next lemma shows that the dimension of $\mu_\omega$ is completely
determined by the lengths and measures of the Moran intervals. While this
result is not surprising because of our separation assumption, it is very
useful to make it explicit.

\begin{lemma}
\label{lem:intervals_give_dimension} Let 
\begin{equation*}
\Delta _{\omega }=\inf \left\{ 
\begin{array}{c}
d:(\exists c_{1},c_{2}>0)(\forall \ I_{n}(\omega )\subseteq I_{N}(\omega
),\left\vert I_{N}\right\vert \leq c_{1},\left\vert I_{n}\right\vert
<\left\vert I_{N}\right\vert ^{1+\Phi (\left\vert I_{N}\right\vert )})\text{ 
} \\ 
\frac{\mu _{\omega }(I_{N})}{\mu _{\omega }(I_{n})}\leq c_{2}\left( \frac{%
\left\vert I_{N}\right\vert }{\left\vert I_{n}\right\vert }\right) ^{d}\text{
}%
\end{array}%
\right\} 
\end{equation*}
and
\begin{equation*}
\delta _{\omega }=\sup \left\{ 
  \begin{array}{c}
        d:(\exists c_{1},c_{2}>0)(\forall \ I_{n}(\omega )\subseteq I_{N}(\omega),
                   \left\vert I_{N}\right\vert \leq c_{1},\left\vert I_{n}\right\vert
           <\left\vert I_{N}\right\vert ^{1+\Phi (\left\vert I_{N}\right\vert )})\  \\ 
          \frac{\mu _{\omega }(I_{N})}{\mu _{\omega }(I_{n})}\geq 
               c_{2}\left( \frac{\left\vert I_{N}\right\vert }{\left\vert I_{n}\right\vert }\right) ^{d}
\end{array}
\right\}.
\end{equation*} 
Then $\Delta _{\omega }=\overline{\dim }_{\Phi }\mu _{\omega }$ and
$\delta _{\omega }=\underline{dim}_\Phi \mu _{\omega }$. 
\end{lemma}

\begin{proof}
We fix an $\omega \in \Omega$ for the rest of the proof and simplify our
notation by removing any explicit mention of the dependence on $\omega$.

Let $\varepsilon >0$ and get constants $c_{1},c_{2}$ such that 
\begin{equation*}
\frac{\mu (I_{N})}{\mu (I_{n})}\leq c_{2}\left( \frac{\left\vert
I_{N}\right\vert }{\left\vert I_{n}\right\vert }\right) ^{\Delta
+\varepsilon }
\end{equation*}%
whenever $I_{n}\subseteq I_{N}$ with $\left\vert \text{ }I_{N}\right\vert
\leq c_{1}$ and $\left\vert I_{n}\right\vert <\left\vert I_{N}\right\vert
^{1+\Phi (\left\vert I_{N}\right\vert )}$. Choose $N_{0}$ so that all Moran
intervals of level $N_{0}-L$ have diameter at most $c_{1}$. Choose $x\in 
\mathcal{C}_{\omega },$ and suppose $R\leq A^{N_{0}}$ and $0<r<R^{1+\Phi
(R)} $. Obtain $n\geq N\geq N_{0}$ such that 
\begin{equation*}
\left\vert I_{N}(x)\right\vert \leq R<\left\vert I_{N-1}(x)\right\vert \leq
\left\vert I_{N-L}(x)\right\vert \leq c_{1}\text{ and }\left\vert
I_{n}(x)\right\vert \leq r<\left\vert I_{n-1}(x)\right\vert .
\end{equation*}%
By Lemma \ref{L1}, $B(x,r)\supseteq I_{n}(x)$ and $B(x,R)\cap \mathcal{C}%
_{\omega }\subseteq I_{N-L}(x)$.

As the function $t^{1+\Phi (t)}$ is decreasing as $t\downarrow 0$, $%
\left\vert I_{n}(x)\right\vert \leq r<R^{1+\Phi (R)}\leq \left\vert
I_{N-L}\right\vert ^{1+\Phi (\left\vert I_{N-L}\right\vert )}$. Hence%
\begin{equation*}
\frac{\mu (B(x,R))}{\mu (B(x,r))}\leq \frac{\mu (I_{N-L})}{\mu (I_{n})}\leq
c_{2}\left( \frac{\left\vert \text{ }I_{N-L}\right\vert }{\left\vert
I_{n}\right\vert }\right) ^{\Delta +\varepsilon }\leq c_{2}\left( \frac{%
A^{-L}\left\vert I_{N}\right\vert }{A\left\vert I_{n-1}\right\vert }\right)
^{\Delta +\varepsilon }\leq C_{2}\left( \frac{R}{r}\right) ^{\Delta
+\varepsilon }
\end{equation*}%
for $C_{2}=c_{2}A^{-(L+1)(\Delta +\varepsilon )}$ and consequently, $%
\overline{\dim }_{\Phi }\mu \leq \Delta $.

The opposite inequality is similar. Let $D=\overline{\dim }_{\Phi }\mu $ and
given $\varepsilon >0$ choose $C_{1},C_{2}$ such that%
\begin{equation*}
\frac{\mu (B(x,R))}{\mu (B(x,r))}\leq C_{2}\left( \frac{R}{r}\right)
^{D+\varepsilon }
\end{equation*}%
whenever $r<R^{1+\Phi (R)}\leq R\leq C_{1}$ and $x\in \mathcal{C}_{\omega }$%
. Suppose that $I_{n}\subseteq I_{N}$ with $\left\vert \text{ }%
I_{N}\right\vert \leq C_{1}$ and $\left\vert I_{n}\right\vert <\left\vert
I_{N}\right\vert ^{1+\Phi (\left\vert I_{N}\right\vert )}$. Choose $x\in 
\mathcal{C}_{\omega }$ such that $I_{n}=I_{n}(x)$ and $I_{N}=I_{N}(x)$. Let $%
R=\left\vert I_{N}(x)\right\vert \leq C_{1}$ and $r=\left\vert
I_{n}(x)\right\vert (1-B)$. As the distance from $I_{n}(x)$ to the nearest
Moran interval of level $n$ is at most $r,$ $B(x,r)\cap \mathcal{C}_{\omega
}\subseteq I_{n}(x)$. Clearly $B(x,R)\supseteq I_{N}(x)$ and $r<R^{1+\Phi
(R)}$. Thus 
\begin{equation*}
\frac{\mu (I_{N})}{\mu (I_{n})}\leq \frac{\mu (B(x,R))}{\mu (B(x,r))}\leq
C_{2}\left( \frac{R}{r}\right) ^{D+\varepsilon }=C_{2}(1-B)^{-(D+\varepsilon
)}\left( \frac{|I_{N}|}{\left\vert I_{n}\right\vert }\right) ^{D+\varepsilon
},
\end{equation*}%
which proves $\Delta \leq D$.

\medskip

The proof of the second part involving the lower dimension is fairly different.
Let $\Psi(t) = t^{1 + \Phi(t)}$ and
choose $\eta $ from the doubling property so that $\Psi (x)\leq \eta \Psi
((1-B)x)$ and $\Psi (x)\leq \eta \Psi (Ax)$. 
Choose $T=T(\eta )$ such that  $B^{T}\leq A^{L}/\eta$.

Let $d=\underline{\dim }_{\Phi }\mu$. 
Fix $\varepsilon >0$ and small $c_{2}>0$. 
There must be $x,R<A^{L+1}$ and $r\leq \Psi (R)$ such that 
\begin{equation*}
  \frac{\mu (B(x,R))}{\mu (B(x,r))}\leq 
      c_{2}\left( \frac{R}{r}\right)^{d+\varepsilon }.
\end{equation*}%
Choose $n,N>L$ such that 
\begin{eqnarray*}
   \left\vert I_{N}(x)\right\vert  &\leq &R\leq 
    \left\vert I_{N-1}(x)\right\vert  \\
        \left\vert I_{n}(x)\right\vert  &\leq &r\leq \left\vert I_{n-1}(x)\right\vert .
\end{eqnarray*}%
Then 
\begin{eqnarray*}
    I_{N}(x) &\subseteq &B(x,R)\cap C_{\omega }\subseteq I_{N-L}(x)\text{ and} \\
    I_{n}(x) &\subseteq &B(x,r)\cap C_{\omega }\subseteq I_{n-L}(x).
\end{eqnarray*}%
As well, $\Psi (R)\leq \Psi (|I_{N}|/A)\leq \eta \Psi (|I_{N}|)$. 
Note that we can assume $c_{2}$ is so small that $n>N+L$.

Since $B(x,r)\subseteq I_{n-L}(x)$ it is covered by at most $2^{T}$ Cantor
intervals at level $n-L+T,$ say the intervals $I_{n-L+T}(y_{j})$ with 
$y_{j}\in B(x,r)\cap I_{n-L}(x)$. 
For each $j$, 
\begin{equation*}
   \left\vert I_{n-L+T}(y_{j})\right\vert \leq B^{T}\left\vert
     I_{n-L}(x)\right\vert \leq \frac{B^{T}}{A^{L}}\left\vert I_{n}(x)\right\vert
      \leq \frac{1}{\eta }r\leq \frac{1}{\eta }\Psi (R)\leq 
    \frac{1}{\eta }\Psi\left( \frac{\left\vert I_{N}\right\vert }{A}\right) 
   \leq \Psi (|I_{N}|).
\end{equation*}%
Moreover, since $\mu (B(x,r))$ is at most the sum of the measures of these
Cantor intervals, one of them, say $I_{n-L+T}(y)$ with 
$y\in B(x,r)\cap I_{n-L}(x)$, satisfies
\begin{equation*}
    \mu (I_{n-L+T}(y))\geq 2^{-T}\mu (B(x,r)).
\end{equation*}%
Lastly, we note that as $y\in I_{n-L}(x)\subseteq I_{N}(x),$ we have 
$I_{N}(y)=I_{N}(x)\subseteq B(x,R)$.

Putting together these facts, we see that 
\begin{eqnarray*}
   \frac{\mu (I_{N}(y))}{\mu (I_{n-L+T}(y))} &\leq 
   &\frac{\mu (B(x,R))}{2^{-T}\mu (B(x,r))}
          \leq 2^{T}c_{2}\left( \frac{R}{r}\right) ^{d+\varepsilon }
\\
    &\leq &2^{T}c_{2}\left( \frac{\frac{1}{A}\left\vert I_{N}(y)\right\vert }
          {\eta \left\vert I_{n-L+T}(y)\right\vert }\right) ^{d+\varepsilon}=
       C_{2}\left( \frac{\left\vert I_{N}(y)\right\vert }{\left\vert I_{n-L+T}(y)\right\vert }\right)^{d \varepsilon }
\end{eqnarray*}%
where the new constant $C_{2}$ can be made as small as desired by choosing $c_{2}$ suitably small.

Since we can do this for arbitrarily small $\varepsilon ,C_{2}$ and $|I_{N}|$
it follows that $\delta \leq d$.

The argument for the reverse inequality is similar.

\end{proof}

 Notice that we used the doubling assumption on $t \to t^{1 + \Phi(t)}$ only for the lower dimension and so all the results in our paper for the upper dimension hold without this assumption.

Using this lemma it is simple to show that the $\Phi$-dimensions of $\mu_\omega$ are almost surely constant.

\begin{proposition}
\label{prop:almostsuredim} For any dimension function $\Phi$, the upper and
lower $\Phi$-dimensions are almost surely constant functions of $\omega$.
\end{proposition}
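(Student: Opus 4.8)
The plan is to reduce the statement to the interval description of Lemma~\ref{lem:intervals_give_dimension} and then to recognise the resulting quantity as a tail random variable of the i.i.d. sequence $(a_n,b_n,p_n)_{n\ge1}$, so that Kolmogorov's zero--one law forces it to be almost surely constant. By Lemma~\ref{lem:intervals_give_dimension}, $\overline{\dim}_\Phi\mu_\omega=\Delta(\omega)$, where $\Delta(\omega)$ is defined through the lengths $|I_N|$ and masses $\mu_\omega(I_N)$ of the Moran intervals; this $\Delta$ is a Borel function of $\omega$, being an infimum over rational $d$ of a countable intersection of measurable events. The lower dimension is handled identically using the lower version of the lemma, so I focus on $\overline{\dim}_\Phi$.

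First I would show that $\Delta$ is measurable with respect to the tail $\sigma$-algebra $\mathcal T=\bigcap_m\sigma\{(a_n,b_n,p_n):n>m\}$. Fix $m$. Because shrinking $c_1$ only removes constraints, in the definition of $\Delta$ one may assume $c_1\le A^m$; as every step-$N$ interval has $|I_N|\ge A^N$, the requirement $|I_N|\le c_1$ then forces $N\ge m$. For a nested pair $I_n\subseteq I_N$ with $N\ge m$, both the length ratio $|I_N|/|I_n|=(\prod_{k=N+1}^{n}s_k)^{-1}$ with $s_k\in\{a_k,b_k\}$, and the corresponding mass ratio $\mu_\omega(I_N)/\mu_\omega(I_n)=(\prod_{k=N+1}^{n}q_k)^{-1}$ with $q_k\in\{p_k,1-p_k\}$, involve only indices $k>m$. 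Hence the inequality $\mu_\omega(I_N)/\mu_\omega(I_n)\le c_2(|I_N|/|I_n|)^d$ has exactly the same two sides no matter how the coordinates $(a_k,b_k,p_k)$ with $k\le m$ are chosen.

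The one place where the first $m$ coordinates still intervene, and the main obstacle, is the admissibility constraint $|I_n|<|I_N|^{1+\Phi(|I_N|)}$: altering those coordinates multiplies every length by a factor $\rho$ in the fixed range $[(A/B)^m,(B/A)^m]$ (depending only on the first $m$ symbols of the address), which shifts the threshold $|I_N|^{1+\Phi(|I_N|)}$ and thus changes which pairs are admissible. To complete the argument one must verify that this shift leaves $\Delta$ unchanged, and this is the step requiring the most care; in particular one cannot simply bound the threshold shift, since for rapidly growing dimension functions it need not be bounded. The key observation is that, after absorbing bounded length ratios into $c_2$, the value $\Delta$ is governed only by the asymptotics of $\log(\mu_\omega(I_N)/\mu_\omega(I_n))/\log(|I_N|/|I_n|)$ along nested pairs whose length ratio tends to infinity; and for each fixed top level $N$ every sufficiently deep descendant $I_n$ satisfies $|I_n|<|I_N|^{1+\Phi(|I_N|)}$ in both the original and the modified construction, since $|I_N|^{1+\Phi(|I_N|)}$ is a fixed positive number. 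Thus the two admissibility regions contain all sufficiently deep pairs and differ only in a boundary layer that the extremal limit defining $\Delta$ does not see, so $\Delta$ is the same for both. It follows that $\Delta$ does not depend on $(a_k,b_k,p_k)_{k\le m}$ for any $m$, hence $\Delta$ is $\mathcal T$-measurable. Since the variables $(a_n,b_n,p_n)$ are independent, Kolmogorov's zero--one law shows that $\Delta=\overline{\dim}_\Phi\mu_\omega$ is almost surely constant, and the same reasoning gives the conclusion for $\underline{\dim}_\Phi\mu_\omega$.
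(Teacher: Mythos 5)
Your reduction to Lemma~\ref{lem:intervals_give_dimension} and the zero--one law strategy are reasonable (though different from the paper's route, which uses permutability of the levels and the Hewitt--Savage zero--one law), and you correctly isolate the real obstacle: changing the first $m$ coordinates rescales every Moran length by a factor $\rho\in[(A/B)^m,(B/A)^m]$ and therefore moves the admissibility threshold $|I_N|^{1+\Phi(|I_N|)}$. The gap is in the step that disposes of this obstacle. Your assertion that the two admissibility regions ``differ only in a boundary layer that the extremal limit defining $\Delta$ does not see'' is supported solely by the observation that, for each fixed top level $N$, all sufficiently deep descendants are admissible in both constructions. That observation makes no quantitative comparison between the two thresholds at all: applied verbatim, the same reasoning would ``prove'' that $\overline{\dim}_\Phi\mu_\omega=\overline{\dim}_{\Phi'}\mu_\omega$ for \emph{any} two dimension functions $\Phi,\Phi'$, which is false --- by the paper's own theorems, large and small dimension functions give genuinely different almost sure values, and the difference is produced exactly by how deep the constraint $|I_n|<|I_N|^{1+\Phi(|I_N|)}$ forces the pairs to be. The supremum defining $\Delta$ can perfectly well be attained only along the \emph{shallowest} admissible pairs (the exponent $\log(\mu_\omega(I_N)/\mu_\omega(I_n))/\log(|I_N|/|I_n|)$ dilutes toward an average as $n-N\to\infty$), i.e.\ along pairs sitting on the admissibility boundary, which is precisely the layer where your two regions disagree; so that layer is the crux, not a negligible fringe. (A secondary soft spot: ``absorbing bounded length ratios into $c_2$'' tacitly assumes mass ratios over bounded-depth pairs are bounded, which fails when $p_n$ is not essentially bounded away from $0$ and $1$.)

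The gap can be closed, but only by actually invoking the defining property of dimension functions. Since $t^{1+\Phi(t)}$ decreases to $0$, the function $T(t)=(1+\Phi(t))\log t$ is monotone; combining this with the fact that consecutive construction scales shrink by factors in $[A,B]$, one checks that any pair admissible in one construction becomes admissible in the other after raising the top level or deepening the bottom level by at most $j_0=\lceil m\log(B/A)/|\log B|\rceil$ levels, and such a bounded shift changes length ratios by a bounded factor and only increases mass ratios in a controlled way, so it can be absorbed into the constants $c_1,c_2$, leaving $\Delta$ unchanged. Without this (or an equivalent) use of the monotonicity of $t^{1+\Phi(t)}$, the tail-measurability claim is unsupported. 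It is worth noting that the paper's argument avoids the issue entirely: a finite permutation of the levels leaves the lengths and masses of all sufficiently deep Moran intervals \emph{exactly} invariant (the same factors are multiplied in a different order), so the admissibility condition is untouched and Hewitt--Savage applies directly, whereas tail-invariance, which your approach requires, is sensitive to absolute scales through the constraint $|I_n|<|I_N|^{1+\Phi(|I_N|)}$.
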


\begin{proof}
We show that $\omega \mapsto \dim _{\Phi }\mu _{\omega }$ is a permutable
random variable (meaning that it is invariant under any finite permutation
of the levels) and thus is almost surely constant by the Hewit-Savage
zero-one law \cite{Chung}. To see this, let $\omega $ be fixed and $\pi :{%
\mathbb{N}}\rightarrow {\mathbb{N}}$ be a permutation that fixes all but
finitely many values. Suppose that $N_{0}$ is the largest such value. We use 
$I$ to denote a Moran interval from the unpermuted construction and $J$ for
a Moran interval from the permuted construction. Then for any $n>N_{0}$ and
choice $v_{1},v_{2},\ldots ,v_{n}\in \{0,1\}$, it is clear from the
description of the construction that $|I_{v_{1}v_{2}\cdots
v_{n}}|=|J_{v_{1}v_{2}\ldots v_{n}}|$. Thus the proposition follows from
Lemma \ref{lem:intervals_give_dimension}.
\end{proof}

\section{\protect\bigskip Dimension results for large $\Phi $}

In this section we continue to use the notation and assumptions from Section %
\ref{sec:construction}.

\label{sec:LargePhi}

\subsection{Statement of the dimension theorem for large $\Phi$ and
preliminary results}

\label{subsec:mainthm}

When computing the $\phi $-dimension of $\mu $ we need to compare ratios of
lengths to ratios of mass under $\mu $ (as in equation (\ref{eq:YandZ})).
The definitions of the random variables $Y$, $Z$, and $G$ (given next) can
be understood using this, as will be clear from the work in this section.

\begin{notation}
\smallskip Given $\theta \ge 0$, we define the iid random variables $%
Y_{n}(\theta), Z_{n}(\theta): \Omega \to {\mathbb{R}}$ by 
\begin{equation*}
Y_{n}(\theta)(\omega)=\left\{ 
\begin{array}{cc}
\log p_{n}(\omega ) & \text{if }p_{n}(\omega )\leq \frac{a_{n}^{\theta
}(\omega )}{a_{n}^{\theta }(\omega )+b_{n}^{\theta }(\omega )} \\ 
\log (1-p_{n}(\omega )) & \text{if }p_{n}(\omega )>\frac{a_{n}^{\theta
}(\omega )}{a_{n}^{\theta }(\omega )+b_{n}^{\theta }(\omega )}%
\end{array}%
\right.
\end{equation*}%
and 
\begin{equation*}
Z_{n}(\theta)(\omega)=\left\{ 
\begin{array}{cc}
\log a_{n}(\omega ) & \text{if }p_{n}\leq \frac{a_{n}^{\theta }(\omega )}{%
a_{n}^{\theta }(\omega )+b_{n}^{\theta }(\omega )} \\ 
\log b_{n}(\omega ) & \text{if }p_{n}>\frac{a_{n}^{\theta }(\omega )}{%
a_{n}^{\theta }(\omega )+b_{n}^{\theta }(\omega )}%
\end{array}%
\right. .
\end{equation*}%
Random variables $Y_{n}^{\prime },Z_{n}^{\prime }$ are defined similarly,
but with the relationship between $p_{n}$ and $\frac{a_{n}^{\theta }}{%
a_{n}^{\theta }+b_{n}^{\theta }}$ interchanged. Put 
\begin{equation}
G(\theta )=\frac{\mathbb{E}_{\omega }\mathbb{(}Y_{1}(\theta)(\omega))}{%
\mathbb{E}_{\omega }\mathbb{(}Z_{1}(\theta)(\omega))}\text{ and }
G^{\prime}(\theta )= \frac{\mathbb{E}_{\omega }\mathbb{(}Y_{1}^{\prime
}(\theta)(\omega))}{\mathbb{E}_{\omega }\mathbb{(}Z_{1}^{\prime
}(\theta)(\omega))}\text{.}  \label{G}
\end{equation}
\end{notation}

We have written $\mathbb{E}_{\omega }$ to emphasize that the expectation is
taken over the variable $\omega $.

The condition $p\leq \frac{a^{\theta }}{a^{\theta }+b^{\theta }}$ is
relevant because it is equivalent to $a^{\theta }/p\geq b^{\theta }/(1-p)$,
an inequality very important for computing these dimensions.

It would be interesting to explore the properties of the functions $G(\theta
)$ and $G^{\prime }(\theta )$, and better understand them as objects in
their own right. However, in this paper we mainly view these functions as
technical tools that we use in our proofs. We do provide some discussion in
Section \ref{subsubsec:G_theta} and plots for a few examples in the
Appendix. We have also explicitly computed $G(\theta )$ in a few of the
examples in Section \ref{subsec:Cab}.

With this notation we can now state our main result for large dimension
functions $\Phi $.

\begin{theorem}
\label{MainLarge} (i) Suppose $G(\psi )<\psi $. There is a set $\Gamma_\psi
\subseteq \Omega ,$ of full measure in $\Omega ,$ such that 
\begin{equation*}
\overline{\dim }_{\Phi }\mu _{\omega }\leq \psi \text{ }
\end{equation*}%
for all large dimension functions $\Phi $ and all $\omega \in \Gamma_\psi $.

(ii) Suppose $G(\psi )\geq \psi $. There is a set $\Gamma_\psi \subseteq
\Omega ,$ of full measure in $\Omega ,$ such that%
\begin{equation*}
\overline{\dim }_{\Phi }\mu _{\omega }\geq \psi \text{ }
\end{equation*}%
for all large dimension functions $\Phi $ and all $\omega \in \Gamma_\psi $.

(iii) Suppose $G^{\prime }(\psi )>\psi $. There is a set $\Gamma_\psi
\subseteq \Omega ,$ of full measure in $\Omega ,$ such that 
\begin{equation*}
\underline{\dim }_{\Phi }\mu _{\omega }\geq \psi \text{ }
\end{equation*}%
for all large dimension functions $\Phi $ and all $\omega \in \Gamma_\psi $.

(iv) Suppose $G^{\prime }(\psi )\leq \psi $. There is a set $\Gamma_\psi
\subseteq \Omega ,$ of full measure in $\Omega ,$ such that%
\begin{equation*}
\underline{\dim }_{\Phi }\mu _{\omega }\leq \psi \text{ }
\end{equation*}%
for all large dimension functions $\Phi $ and all $\omega \in \Gamma_\psi $.
\end{theorem}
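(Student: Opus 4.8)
The plan is to compute $\overline{\dim}_\Phi\mu$ through the exponent $\Delta$ of Lemma~\ref{lem:intervals_give_dimension}, reducing everything to the ratios $\log(\mu(I_N)/\mu(I_n))/\log(|I_N|/|I_n|)$ over nested Moran intervals with $|I_n|<|I_N|^{1+\Phi(|I_N|)}$. A descendant $I_n\subseteq I_N$ is a choice of children $(v_{N+1},\dots,v_n)$, and writing $P=\sum_{k=N+1}^{n}\log(\mathrm{prob})_k<0$ and $F=\sum_{k=N+1}^{n}\log(\mathrm{factor})_k<0$, where $(\mathrm{prob},\mathrm{factor})_k=(p_k,a_k)$ or $(1-p_k,b_k)$ according to $v_k$, the ratio equals $P/F$. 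Since $F<0$, for a target $\theta$ one has $P/F>\theta$ exactly when $P-\theta F=\sum_{k=N+1}^{n}[\log(\mathrm{prob})_k-\theta\log(\mathrm{factor})_k]<0$. The crucial structural point, which uses the $1$-variable construction, is that this sum is additive in the levels and the choices $v_k$ are free, so $\min_{v}(P-\theta F)=\sum_{k=N+1}^{n}W_k(\theta)$ with $W_k(\theta)=\min(\log p_k-\theta\log a_k,\log(1-p_k)-\theta\log b_k)=Y_k(\theta)-\theta Z_k(\theta)$, the minimizing branch being the one recorded by $Y_k,Z_k$. Thus a descendant with ratio exceeding $\theta$ exists iff $\sum_{k=N+1}^{n}W_k(\theta)<0$. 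As $\mathbb{E}Z_1(\theta)<0$, the hypotheses become statements about the i.i.d.\ mean: $G(\psi)<\psi\iff\mathbb{E}W_1(\psi)>0$ and $G(\psi)\ge\psi\iff\mathbb{E}W_1(\psi)\le0$.

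For (i) I will show that almost surely, for all admissible pairs with $N$ large, $\sum_{k=N+1}^{n}W_k(\psi)\ge0$, so no descendant has ratio above $\psi$ and $\Delta\le\psi$. Because $\mathbb{E}W_1(\psi)>0$ and $W_1(\psi)$ has an exponential moment (from $\mathbb{E}a_1^{-t},\mathbb{E}p_1^{-t}<\infty$), a Chernoff bound gives $\rho<1$ with $\Pr[S_m^{(N)}<0]\le\rho^{m}$, where $S_m^{(N)}=\sum_{k=N+1}^{N+m}W_k(\psi)$. Hence the last descent time $M_N=\sup\{m:S_m^{(N)}<0\}$ satisfies $\Pr[M_N\ge C'\log N]\le\sum_{m\ge C'\log N}\rho^m\le C\,N^{-C'|\log\rho|}$, which is summable once $C'$ is large; by Borel--Cantelli, almost surely $M_N<C'\log N$ for all large $N$. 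The constraint $|I_n|<|I_N|^{1+\Phi(|I_N|)}$ together with $A^N\le|I_N|\le B^N$ and $|\log(\mathrm{factor})_k|\le|\log A|$ forces the depth $n-N\ge \Phi(|I_N|)\,|\log|I_N||\,/\,|\log A|=H(|I_N|)\,\log|\log|I_N||\,/\,|\log A|$; since $|\log|I_N||\ge N|\log B|$ and (replacing $H$ by a monotone minorant tending to infinity) $H(|I_N|)\to\infty$, this exceeds $C'\log N$ for every fixed $C'$ once $N$ is large. Thus every admissible depth lies beyond the last descent time, so $\Delta\le\psi$. This passage from the geometric length constraint to a depth growing faster than $\log N$, handled uniformly over all large $\Phi$ by a single full-measure set, is the main obstacle, and is precisely where largeness of $\Phi$ enters.

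For (ii), with $\mathbb{E}W_1(\psi)\le0$, fix $\varepsilon>0$ and put $\theta=\psi-\varepsilon$; strict monotonicity of $\theta\mapsto\mathbb{E}W_1(\theta)$ (its derivative is at least $-\log B>0$) gives $\mathbb{E}W_1(\theta)<0$. Following from a level $N$ the branch realizing $W_k(\theta)$, the strong law gives both $\sum_{k=N+1}^{N+m}W_k(\theta)\to-\infty$ and $\sum_{k=N+1}^{N+m}Z_k(\theta)\to-\infty$ almost surely; the first makes the ratio exceed $\theta$, while the second drives the length of the level-$(N+m)$ interval below $|I_N|^{1+\Phi(|I_N|)}$, so for all large $m$ this descendant is admissible and witnesses ratio $>\psi-\varepsilon$. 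Running this along a fixed sequence $N_j\to\infty$ (a countable intersection of almost-sure events) yields such witnesses at scales tending to $0$, hence $\overline{\dim}_\Phi\mu\ge\psi-\varepsilon$, and letting $\varepsilon\to0$ gives $\ge\psi$. No union bound is needed, and uniformity over large $\Phi$ is automatic since the extremal branch does not depend on $\Phi$; only the depth at which admissibility begins does.

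Parts (iii)--(iv) are the mirror image for the lower dimension, where Lemma~\ref{lem:intervals_give_dimension} identifies $\underline{\dim}_\Phi\mu$ with the infimum of the same ratios. A descendant with ratio below $\theta$ exists iff $\max_v(P-\theta F)=\sum_{k=N+1}^{n}W_k'(\theta)>0$, with $W_k'(\theta)=\max(\log p_k-\theta\log a_k,\log(1-p_k)-\theta\log b_k)=Y_k'(\theta)-\theta Z_k'(\theta)$; and $G'(\psi)>\psi\iff\mathbb{E}W_1'(\psi)<0$, $G'(\psi)\le\psi\iff\mathbb{E}W_1'(\psi)\ge0$. Then (iii) repeats the argument of (i) with $W'$ in place of $W$ (the needed exponential moment now being automatic, as $W_1'$ is bounded above), and (iv) repeats (ii) with $\theta=\psi+\varepsilon$. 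Finally, each conclusion is an almost-sure inequality, and since $\overline{\dim}_\Phi\mu$ and $\underline{\dim}_\Phi\mu$ are almost surely constant by Proposition~\ref{prop:almostsuredim}, it holds on a full-measure set $\Gamma_\psi$ simultaneously for all large $\Phi$.
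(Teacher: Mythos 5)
Your proof is correct, and it follows the same basic strategy as the paper---reduction to Moran-interval ratios via Lemma \ref{lem:intervals_give_dimension}, per-level extremization producing exactly the paper's $Y_n(\theta),Z_n(\theta)$ (your $W_n(\theta)=Y_n(\theta)-\theta Z_n(\theta)$), Chernoff plus Borel--Cantelli for the uniform direction, and the extremal branch for the witness direction---but your execution differs in two worthwhile ways. First, you linearize: since $\mathbb{E}(Z_1(\theta))<0$, the hypotheses $G(\psi)<\psi$, $G(\psi)\geq\psi$, etc., become sign conditions on $\mathbb{E}(W_1(\psi))$, and all estimates concern the single i.i.d.\ sum $\sum_k W_k(\psi)$ rather than the ratio $\sum_k Y_k/\sum_k Z_k$ treated in the paper's Lemma \ref{PR}; this removes the continuity step (the $\eta(\delta)$ argument) there, and your ``last descent time'' bound $M_N<C'\log N$ with one fixed $C'$ yields a single full-measure set serving all large $\Phi$ at once, where the paper instead applies Lemma \ref{PR} for each member of the countable family $\Phi_j$ and intersects over $j$. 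Second, and more substantively, for the lower bounds (parts (ii) and (iv)) the paper again relies on the uniform control of Lemma \ref{PR} (for all $m\geq\zeta_N^H$ and all large $N$, then intersecting over $\delta_j=1/j$), whereas you observe that witnesses at a fixed sequence of base levels $N_j\to\infty$ suffice, that admissibility from a fixed base level holds automatically at sufficient depth for \emph{any} dimension function, and that the plain strong law along the extremal branch then produces the witnesses; this direction therefore needs no large-deviation input at all, and it shows in passing that the conclusions of (ii)/(iv) hold for every dimension function $\Phi$, not only large ones (consistent with the monotonicity of $\overline{\dim}_\Phi$ in $\Phi$). The only point left implicit in your sketch---as it also is in the paper---is that to convert witnesses with ratio exceeding $\theta$ into the statement $\overline{\dim}_\Phi\mu_\omega\geq\theta$ one notes that the factor $\left(|I_{N_j}|/|I_n|\right)^{\theta-d}\to\infty$ as the depth grows, so no multiplicative constant $c_2$ can survive; this is standard and does not affect correctness.
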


An immediate corollary is as follows. Again, there is a corresponding
statement for $G^{\prime }$ and the lower $\Phi $-dimensions.

\begin{corollary}
\label{cor:large_dim} \label{CDim}Suppose there is a choice of $\alpha $
such that $G(\alpha )=\alpha $ and $G(\psi )<\psi $ if $\psi >\alpha $. Then
there is a set $\Gamma \subseteq \Omega ,$ of full measure in $\Omega ,$
such that 
\begin{equation*}
\overline{\dim }_{\Phi }\mu _{\omega }=\alpha \text{ }
\end{equation*}%
for all large dimension functions $\Phi $ and all $\omega \in \Gamma $.
\end{corollary}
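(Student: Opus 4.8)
The plan is to derive the two inequalities $\overline{\dim}_\Phi \mu_\omega \geq \alpha$ and $\overline{\dim}_\Phi \mu_\omega \leq \alpha$ separately, each holding on a set of full measure, and then to intersect these sets. Throughout I would exploit the fact that the full-measure sets produced by Theorem \ref{MainLarge} already work simultaneously for \emph{all} large dimension functions $\Phi$, so no delicate uniformity over $\Phi$ needs to be engineered by hand.

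For the lower bound I would apply Theorem \ref{MainLarge}(ii) directly at $\psi = \alpha$. Since the hypothesis gives $G(\alpha) = \alpha$, in particular $G(\alpha) \geq \alpha$, so part (ii) applies and yields a full-measure set $\Gamma_\alpha$ on which $\overline{\dim}_\Phi \mu_\omega \geq \alpha$ for every large $\Phi$.

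The upper bound is where the one genuine subtlety lies: I cannot simply invoke part (i) at $\psi = \alpha$, because that would require $G(\alpha) < \alpha$, whereas we only have equality. The remedy is to approach $\alpha$ from above. I would fix a sequence $\psi_k \downarrow \alpha$ with $\psi_k > \alpha$ for every $k$; by hypothesis each satisfies $G(\psi_k) < \psi_k$, so Theorem \ref{MainLarge}(i) supplies a full-measure set $\Gamma_{\psi_k}$ on which $\overline{\dim}_\Phi \mu_\omega \leq \psi_k$ for every large $\Phi$. Setting $\Gamma = \Gamma_\alpha \cap \bigcap_{k} \Gamma_{\psi_k}$, a countable intersection of full-measure sets and hence itself of full measure, we have on $\Gamma$ that $\overline{\dim}_\Phi \mu_\omega \leq \psi_k$ for all $k$; letting $k \to \infty$ gives $\overline{\dim}_\Phi \mu_\omega \leq \alpha$.

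Combining the two bounds on $\Gamma$ yields $\overline{\dim}_\Phi \mu_\omega = \alpha$ for all large dimension functions $\Phi$ and all $\omega \in \Gamma$, as desired. The main obstacle is thus not a difficult estimate but rather the structural observation that the equality case $G(\alpha)=\alpha$ forces the upper bound to be obtained through a limiting argument over a countable decreasing sequence $\psi_k \downarrow \alpha$, rather than by a single application of part (i); the monotonicity of the $\Phi$-dimension in $\psi$ (implicit in the fact that smaller target exponents give smaller dimensions) and the countability of this sequence are exactly what make the limiting step legitimate.
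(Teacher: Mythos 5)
Your proposal is correct and follows essentially the same argument as the paper: apply Theorem \ref{MainLarge}(ii) at $\psi=\alpha$ for the lower bound, apply part (i) along a countable family of values decreasing to $\alpha$ (the paper uses the rationals $q>\alpha$, you use a sequence $\psi_k\downarrow\alpha$) for the upper bound, and intersect the resulting full-measure sets. The paper itself remarks immediately after the corollary that a sequence $\psi_k\downarrow\alpha$ suffices, so your variant is precisely the one anticipated there.
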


\begin{proof}
From part (i) of the theorem, for each rational $q > \alpha$ we have a set $%
\Gamma_q$ of full measure so that for all large dimension functions $\Phi$
and $\omega \in \Gamma_q$ we have $\overline{\dim}_\Phi \mu_\omega \le q$.
From part (ii) of the theorem there is a set $\Gamma_\alpha$ of full measure
so that for all large dimension functions $\Phi$ and $\omega \in
\Gamma_\alpha$ we have $\overline{\dim}_\Phi \mu_\omega \ge \alpha$. Let 
\begin{equation*}
\Gamma = \Gamma_\alpha \cap \bigcap_{q > \alpha, q \text{ rational }}
\Gamma_q,
\end{equation*}
which is also a subset of $\Omega$ of full measure. Then for any large
dimension function $\Phi$ and $\omega \in \Gamma$, we have 
\begin{equation*}
\alpha \le \overline{\dim}_\Phi \mu_\omega \le \inf \{ q : q > \alpha, q 
\text{ rational } \} = \alpha.
\end{equation*}
\end{proof}

Of course, it is enough that $G(\psi _{k})<\psi _{k}$ for a sequence $(\psi
_{k})$ decreasing to $\alpha $.

\begin{corollary}
\label{cor:Gfuncroot} Let $\Phi$ be a large dimension function. Then $\alpha
= \overline{\dim}_\Phi \mu_\omega$ almost surely if and only if $G(\psi) <
\psi$ for all $\psi > \alpha$ and $G(\psi) \ge \psi$ for all $\psi < \alpha$.
\end{corollary}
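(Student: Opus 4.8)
**The plan is to derive Corollary \ref{cor:Gfuncroot} as a direct logical consequence of Theorem \ref{MainLarge} parts (i) and (ii), by showing that the biconditional splits cleanly into two implications, each of which is handled by the packaging already done in Corollary \ref{cor:large_dim}.**

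First I would prove the ``if'' direction. Assume $G(\psi)<\psi$ for all $\psi>\alpha$ and $G(\psi)\ge\psi$ for all $\psi<\alpha$. The hypothesis that $G(\psi)<\psi$ for all $\psi>\alpha$ gives, via part (i) of the theorem applied along a countable sequence $(q_k)$ of rationals decreasing to $\alpha$, a full-measure set on which $\overline{\dim}_\Phi\mu_\omega\le q_k$ for every $k$, hence $\overline{\dim}_\Phi\mu_\omega\le\alpha$. For the reverse inequality I would apply part (ii) along a countable sequence $(\psi_k)$ increasing to $\alpha$: for each such $\psi_k<\alpha$ we have $G(\psi_k)\ge\psi_k$, so part (ii) yields a full-measure set on which $\overline{\dim}_\Phi\mu_\omega\ge\psi_k$, and intersecting over $k$ gives $\overline{\dim}_\Phi\mu_\omega\ge\sup_k\psi_k=\alpha$. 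Intersecting the two full-measure sets, we conclude $\overline{\dim}_\Phi\mu_\omega=\alpha$ almost surely. This is essentially the argument already spelled out in the proof of Corollary \ref{cor:large_dim}, so I would cite that corollary rather than repeat the intersection-of-full-measure-sets bookkeeping.

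For the ``only if'' direction, I would argue by contraposition on each of the two conditions separately. Suppose $\alpha=\overline{\dim}_\Phi\mu_\omega$ almost surely. If $G(\psi)\ge\psi$ failed for some $\psi<\alpha$, then $G(\psi)<\psi$, and I would pick a rational $q$ with $\psi\le q<\alpha$; a short monotonicity-type argument using part (i) would force $\overline{\dim}_\Phi\mu_\omega\le q<\alpha$ on a full-measure set, contradicting the almost-sure value $\alpha$. Symmetrically, if $G(\psi)<\psi$ failed for some $\psi>\alpha$, then $G(\psi)\ge\psi$, and part (ii) would give $\overline{\dim}_\Phi\mu_\omega\ge\psi>\alpha$ on a full-measure set, again a contradiction.

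The main obstacle, and the point needing the most care, is that parts (i) and (ii) as stated only control $\overline{\dim}_\Phi\mu_\omega$ at the single exponent $\psi$ whose $G$-value is assumed bounded; the ``only if'' direction for the first condition requires relating the value at the given $\psi$ (where only $G(\psi)<\psi$ is known) to a rational $q$ with $\psi\le q<\alpha$ at which I can actually invoke part (i). This is safe only if $G(q)<q$ holds as well, which is \emph{not} guaranteed by the bare hypothesis $\alpha=\overline{\dim}_\Phi\mu_\omega$. I would resolve this by noting that the almost-sure equality $\overline{\dim}_\Phi\mu_\omega=\alpha$ together with part (ii) already forces $G(q)<q$ for every $q>\alpha$ (otherwise part (ii) would push the dimension above $\alpha$), so the contrapositive can be run directly at the offending $\psi$ itself using part (i), giving $\overline{\dim}_\Phi\mu_\omega\le\psi<\alpha$. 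Making this dependency explicit — that the two conditions in the biconditional are precisely the two halves of the threshold already witnessed by parts (i) and (ii) — is the crux of writing the argument correctly.
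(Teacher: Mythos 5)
Your proposal is correct and takes essentially the same route as the paper: the ``only if'' direction is a contradiction argument applying Theorem \ref{MainLarge}(ii) or (i) directly at the offending $\psi$, and the ``if'' direction approximates $\alpha$ from above and below and intersects the resulting full-measure sets (as in Corollary \ref{cor:large_dim}). The ``obstacle'' in your final paragraph is a phantom --- parts (i) and (ii) apply at any single $\psi$ satisfying their hypothesis, so no rational $q$ or monotonicity argument is ever needed --- and your final resolution (running the contrapositive directly at the offending $\psi$) is exactly what the paper does.
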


\begin{proof}
Suppose that $\alpha \ge 0$ is the almost sure value for $\overline{\dim }%
_{\Phi }\mu _{\omega }$ (which we know exists by Proposition \ref%
{prop:almostsuredim}). Take $\psi >\alpha $ and suppose that $G(\psi )\geq
\psi $. Then by part (ii) of the theorem, $\overline{\dim }_{\Phi }\mu
_{\omega }\geq \psi >\alpha $ almost surely, which is a contradiction. Thus
in fact $G(\psi )<\psi $. Similarly, if $\psi <\alpha $ but $G(\psi )<\psi ,$
then $\overline{\dim }_{\Phi }\mu _{\omega }\leq \psi <\alpha $ almost
surely, which is another contradiction and so $G(\psi )\geq \psi $ in this
case.

For the converse, suppose $G(\psi) < \psi$ for all $\psi > \alpha$ and $%
G(\psi) \ge \psi$ for all $\psi < \alpha$. Then for all $\psi > \alpha$ we
have $\overline{\dim}_\Phi \mu_\omega \le \psi$ almost surely and so $%
\overline{\dim}_\Phi \mu_\omega \le \alpha$ almost surely. Similarly for all 
$\psi < \alpha$ we have $\overline{\dim}_\Phi \mu_\omega \ge \psi$ almost
surely and so $\overline{\dim}_\Phi \mu_\omega \ge \alpha$ almost surely.
\end{proof}

What this last corollary shows, in particular, is that there must always be
such a value $\alpha $ where $G$ \textquotedblleft crosses the
diagonal\textquotedblright\ since for any given large $\Phi $ it is clear
that $\overline{\dim }_{\Phi }\mu _{\omega }$ must have some almost sure
value.

\smallskip

Before proving the theorem, we introduce further notation and establish some
preliminary results. Given a large dimension function $\Phi ,$ assume $H$
and $t_{0}$ satisfy%
\begin{equation}
\Phi (t)\geq \frac{H(t)\log \left\vert \log t\right\vert }{\left\vert \log
t\right\vert }\text{ for all }0<t\leq t_{0},  \label{Hdef}
\end{equation}%
where $H(t)$ $\uparrow \infty $ as $t\rightarrow 0$. Set 
\begin{equation}
\zeta _{N}^{H}=\frac{H(B^{N})\log (N\left\vert \log B\right\vert )}{%
\left\vert \log A\right\vert }.  \label{PsiDefn}
\end{equation}

\begin{lemma}
\label{prelim}(i) If $k<\zeta _{N}^{H}$, then for $N$ sufficently large
there are no pairs of Moran subsets $I_{N}(x),$ $I_{N+k}(x)$ where 
\begin{equation*}
|I_{N+k}(x)|\leq |I_{N}(x)|^{1+\Phi (|I_{N}(x)|)}.
\end{equation*}

(ii) Fix $c>0$. If $H$ is sufficiently large near 0, then $%
\sum_{N=1}^{\infty }\exp (-c\zeta _{N}^{H})<\infty $.
\end{lemma}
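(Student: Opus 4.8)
The plan is to prove both parts by direct estimation, using the crude length bounds from the construction together with the defining lower bound \eqref{Hdef} on $\Phi$.

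For part (i) it suffices to show directly that \emph{every} pair satisfies the reverse inequality $|I_{N+k}(x)| > |I_{N}(x)|^{1+\Phi(|I_{N}(x)|)}$. Since the construction gives $|I_{N+k}(x)|/|I_{N}(x)| \geq A^{k}$, it is enough to prove $A^{k} > |I_{N}(x)|^{\Phi(|I_{N}(x)|)}$; writing $t = |I_{N}(x)|$ and taking logarithms (noting $\log A, \log t < 0$, so the inequality flips), this is precisely $k|\log A| < \Phi(t)\,|\log t|$. First I would invoke \eqref{Hdef}, which applies once $N$ is large enough that $t \leq B^{N} \leq t_{0}$, to get $\Phi(t)\,|\log t| \geq H(t)\log|\log t|$. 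Then, using that $t \in [A^{N}, B^{N}]$ forces $|\log t| \geq N|\log B|$, and that $H$ increases as $t \downarrow 0$ so $H(t) \geq H(B^{N})$, I would bound
\[
\Phi(t)\,|\log t| \geq H(B^{N})\log(N|\log B|) = |\log A|\,\zeta_{N}^{H}.
\]
Combining this with the hypothesis $k < \zeta_{N}^{H}$ yields $k|\log A| < \zeta_{N}^{H}|\log A| \leq \Phi(t)\,|\log t|$, which is exactly the required inequality.

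For part (ii) I would substitute the definition \eqref{PsiDefn} of $\zeta_{N}^{H}$ and rewrite the summand as a power:
\[
\exp(-c\zeta_{N}^{H}) = \exp\!\left(-\frac{cH(B^{N})}{|\log A|}\log(N|\log B|)\right) = \bigl(N|\log B|\bigr)^{-cH(B^{N})/|\log A|}.
\]
Since $H(B^{N}) \to \infty$ as $N \to \infty$, the exponent $cH(B^{N})/|\log A|$ tends to infinity, so for any fixed $\varepsilon > 0$ it eventually exceeds $1+\varepsilon$. For all large $N$ the summand is then dominated by $(N|\log B|)^{-(1+\varepsilon)}$, whose tail is comparable to the convergent $p$-series $\sum N^{-(1+\varepsilon)}$; the finitely many initial terms contribute nothing to convergence. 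This is the content of requiring $H$ to be ``sufficiently large near $0$'': only the growth of $H$ at small scales enters the estimate.

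The main obstacle, such as it is, lies entirely in part (i): keeping the directions of the inequalities straight after taking logarithms of quantities in $(0,1)$, and correctly matching the three ingredients—the lower bound on $\Phi$, the scale estimate $|\log t| \geq N|\log B|$, and the monotonicity $H(t) \geq H(B^{N})$—to the three factors appearing in $\zeta_{N}^{H}$. Once the algebraic form of $\zeta_{N}^{H}$ has been unwound into a power of $N|\log B|$, part (ii) is then routine.
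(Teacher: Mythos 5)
Your proof of part (i) is correct and is the same argument as the paper's: reduce to $A^{k} > R^{\Phi(R)}$ for $R = |I_{N}(x)|$, then chain together (\ref{Hdef}), the bound $|\log R| \ge N|\log B|$, and the monotonicity of $H$ to obtain $\Phi(R)|\log R| \ge H(B^{N})\log(N|\log B|) = \zeta_{N}^{H}|\log A| > k|\log A|$. Nothing further is needed there.

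Part (ii), however, as you have written it proves a weaker statement than the lemma asserts, and the weakening is exactly what would break the lemma's later use. You assume $H(B^{N}) \to \infty$ as $N \to \infty$, reading ``sufficiently large near $0$'' as a growth condition; the intended meaning is a constant threshold: there is a constant $M = M(c,A)$ such that the conclusion holds whenever $H(t) \ge M$ for all $t$ near $0$. The distinction is not cosmetic. The lemma is invoked in Lemma \ref{PR} (and again in the proof of Theorem \ref{MainLarge}) for \emph{constant} functions $H$, and a constant function never tends to infinity, so your argument does not apply in precisely the situation where the paper needs the lemma. Fortunately the repair is immediate from your own identity $\exp(-c\zeta_{N}^{H}) = (N|\log B|)^{-cH(B^{N})/|\log A|}$: if $H(t) \ge 2|\log A|/c$ for all small $t$, then for all large $N$ the exponent is at least $2$, so once $N|\log B| \ge 1$ the summand is at most $|\log B|^{-2}N^{-2}$ and the series converges by comparison with $\sum N^{-2}$. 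This constant-threshold version is exactly the calculation the paper carries out, and it is what ``sufficiently large near $0$'' is meant to capture.
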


\begin{proof}
(i) Choose $N_{0}$ such that $B^{N_{0}}\leq t_{0}$. Assume $N\geq N_{0}$ and
for convenience put $r=|I_{N+k}(x)|$ and $R=|I_{N}(x)|\leq B^{N}\leq t_{0}$.
Then 
\begin{eqnarray*}
\Phi (R)|\log R| &\geq &H(R)\log |\log R|\geq H(B^{N})\log |N\log B| \\
&=&\zeta _{N}^{H}|\log A|>k|\log A|,
\end{eqnarray*}%
so $R^{\Phi (R)}<A^{k}$. As $r/R\geq A^{k}>R^{\Phi (R)},$ this means $%
r>R^{1+\Phi (R)}$, hence we cannot have $|I_{N+k}(x)|\leq |I_{N}(x)|^{1+\Phi
(|I_{N}(x)|)}.$

(ii) A straightforward calculation shows that if $H(B^{N})$ is suitably
large for $N\geq N_{0}$, then $\exp (-c\zeta _{N}^{H})\leq N^{-2}$ and hence 
$\sum_{N\geq N_{0}}\exp (-c\zeta _{N}^{H})<\infty .$
\end{proof}

The next lemma is the key probabilistic result. It is based on the Chernov
bounds for iid random variables $X_{1},X_{2},\ldots ,X_{n}$.

\begin{theorem}
{\rm \lbrack Chernov]} \cite[Section 2.2]{Pe} Let $X_{n}$ be iid random variables
and assume that $\mathbb{E}(\exp (t(X_{i}-\mathbb{E}(X_{i}))))\leq
e^{ct^{2}/2}$ for some $c>0$ and $t>0$. Then for all $0<\eta \leq nct$ we
have 
\begin{equation*}
\mathcal{P}\left( \left\vert \sum_{i=1}^{n}X_{i}-n\mathbb{E}%
(X_{i})\right\vert \geq \eta \right) \leq 2e^{\eta ^{2}/(2nc)}.
\end{equation*}
\end{theorem}

\begin{lemma}
\label{PR}{\rm [Probabilistic Result]} Fix any $\theta \geq 0$ and $\delta >0$. If
the constant function $H$ is sufficiently large and $\zeta _{N}^{H}$ is
defined as in (\ref{PsiDefn}), then 
\begin{equation*}
\mathcal{P}\left\{ \omega :\exists m\geq \zeta _{N}^{H}\text{ with }%
\left\vert \frac{\sum_{n=N+1}^{N+m}Y_{n}(\theta )(\omega )}{%
\sum_{n=N+1}^{N+m}Z_{n}(\theta )(\omega )}-\frac{\mathbb{E}(Y_{1}(\theta ))}{%
\mathbb{E}(Z_{1}(\theta ))}\right\vert >\delta \ \mathrm{\ i.o.}\right\} =0%
\text{.}
\end{equation*}%
A similar statement holds for $Y_{n}^{\prime },$ $Z_{n}^{\prime }$.
\end{lemma}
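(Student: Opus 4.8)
The plan is to reduce the statement to a Borel--Cantelli argument driven by Chernoff-type exponential tail bounds, after first recording the structural facts that make the ratio $\sum Y_n/\sum Z_n$ well behaved. Write $Y_n=Y_n(\theta )$, $Z_n=Z_n(\theta )$, $\mu _Y=\mathbb{E}(Y_1(\theta ))$, $\mu _Z=\mathbb{E}(Z_1(\theta ))$ and $G=\mu _Y/\mu _Z$. Since $A\le \min (a_n,b_n)$ and $a_n+b_n\le B$, both $a_n$ and $b_n$ lie in $[A,B-A]$, so $Z_n\in [\log A,\log (B-A)]$ is bounded and bounded away from $0$; in particular $\mu _Z<0$ and every partial sum $\sum Z_n$ is negative and grows linearly in the number of terms, so no division-by-zero issue arises. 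The moment hypotheses $\mathbb{E}(p_n^{-t}),\mathbb{E}((1-p_n)^{-t})<\infty $ give $\mathbb{E}(e^{-sY_n})<\infty $ for all sufficiently small $s>0$, while $Y_n\le 0$ makes $\mathbb{E}(e^{sY_n})\le 1$ for $s\ge 0$; thus the cumulant generating function of $Y_n$ is finite in a neighbourhood of the origin, and $Z_n$, being bounded, has finite exponential moments of every order.

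Next I would linearise the ratio. For a block of length $m$ set $S_Y=\sum _{n=N+1}^{N+m}Y_n$ and $S_Z=\sum _{n=N+1}^{N+m}Z_n$. A short computation shows that if $|S_Y-m\mu _Y|\le \varepsilon m$ and $|S_Z-m\mu _Z|\le \varepsilon m$, then $|S_Y/S_Z-G|$ is bounded by a quantity that tends to $0$ with $\varepsilon $, uniformly in $m$, precisely because $\mu _Z$ is bounded away from $0$ (here I use crucially that $\theta $, and hence $\mu _Y,\mu _Z$, is fixed). Consequently, given $\delta $ and $\theta $ I can choose $\varepsilon =\varepsilon (\delta ,\theta )>0$ for which the event $\{|S_Y/S_Z-G|>\delta \}$ is contained in $\{|S_Y-m\mu _Y|>\varepsilon m\}\cup \{|S_Z-m\mu _Z|>\varepsilon m\}$. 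Applying the Chernoff inequality \cite{Pe} to each of these two deviation events, and using the exponential moment bounds above, produces a constant $c=c(\delta ,\theta )>0$ with $\mathcal{P}(|S_Y/S_Z-G|>\delta )\le 4e^{-cm}$ for every $m$.

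Finally I would assemble the Borel--Cantelli step. Let $E_N$ be the event that some integer $m\ge \zeta _N^H$ violates the ratio bound; summing the geometric tail gives $\mathcal{P}(E_N)\le \sum _{m\ge \zeta _N^H}4e^{-cm}\le Ce^{-c\zeta _N^H}$. With $c$ now fixed, Lemma \ref{prelim}(ii) says that for the constant $H$ chosen large enough we have $\sum _N e^{-c\zeta _N^H}<\infty $ (explicitly $e^{-c\zeta _N^H}=(N|\log B|)^{-cH/|\log A|}$, which is summable once $cH/|\log A|>1$). Hence $\sum _N\mathcal{P}(E_N)<\infty $, and the Borel--Cantelli lemma yields $\mathcal{P}(\limsup _N E_N)=0$, which is exactly the assertion. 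The argument for $Y_n^{\prime },Z_n^{\prime }$ is identical.

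The main obstacle I anticipate is making the ``uniform over all $m\ge \zeta _N^H$'' part rigorous while keeping the bound summable in $N$. This is resolved by the \emph{exponential} (rather than polynomial) decay in $m$, which lets the union bound over $m$ collapse to the single boundary term $e^{-c\zeta _N^H}$, and then by the fact, supplied by Lemma \ref{prelim}(ii), that enlarging the constant $H$ turns that term into a summable sequence. A secondary care point is the two-sided Chernoff bound for $Y_n$: the lower tail needs the exponential-moment hypothesis on $-\log p_n$ and $-\log (1-p_n)$ and is available only for small $\varepsilon $, but this suffices since $\varepsilon $ may be chosen as small as we like.
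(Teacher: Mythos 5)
Your proposal is correct and follows essentially the same route as the paper's proof: reduce the ratio deviation to separate deviations of the $Y$-sums and $Z$-sums via continuity of $(x,y)\mapsto x/y$ at $(\mathbb{E}(Y_1),\mathbb{E}(Z_1))$ (valid since $\mathbb{E}(Z_1)$ is bounded away from $0$), apply Chernoff bounds from the exponential-moment hypotheses, collapse the union over $m\geq \zeta_N^H$ to a geometric tail $Ce^{-c\zeta_N^H}$, invoke Lemma \ref{prelim}(ii) for summability in $N$ once $H$ is large, and finish with Borel--Cantelli. The only cosmetic difference is that you merge the two deviation events into a single event $E_N$ before applying Borel--Cantelli, whereas the paper treats the $Y$- and $Z$-deviations separately and intersects the resulting full-measure sets.
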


\begin{proof}
Since the function $f(x,y)=x/y$ is continuous at the point $(\mathbb{E}%
(Y_{1}(\theta )),\mathbb{E}(Z_{1}(\theta ))),$ for the given $\delta >0$
there is some $\eta =\eta (\delta )>0$ such that when both inequalities 
\begin{equation*}
\left\vert \frac{1}{m}\sum_{n=N+1}^{N+m}Y_{n}(\theta )(\omega )-\mathbb{E}%
(Y_{1}(\theta ))\right\vert \leq \eta \text{ and }\left\vert \frac{1}{m}%
\sum_{n=N+1}^{N+m}Z_{n}(\theta )(\omega )-\mathbb{E}(Z_{1}(\theta
))\right\vert \leq \eta
\end{equation*}%
hold, then 
\begin{equation*}
\left\vert \frac{\sum_{n=N+1}^{N+m}Y_{n}}{\sum_{n=N+1}^{N+m}Z_{n}}-\frac{%
\mathbb{E}(Y_{1})}{\mathbb{E}(Z_{1})}\right\vert \leq \delta .
\end{equation*}

Since we have assumed $\mathbb{E}(e^{-t\log p_{n}}),\mathbb{E}(e^{-t\log
(1-p_{n})})<\infty $, Chernov's inequality implies there are constants $C$
and $c>0$ such that for all $m,$ 
\begin{equation*}
\mathcal{P}\left\{ \omega :\left\vert \frac{1}{m}\sum_{n=N+1}^{N+m}Y_{n}-%
\mathbb{E}(Y_{1})\right\vert >\eta \right\} \leq Ce^{-cm}.\text{ }
\end{equation*}%
Applying Lemma \ref{prelim}(ii), we know $\sum_{N}e^{-c\zeta
_{N}^{H}}<\infty $ if $H$ is sufficiently large. Thus, if we let%
\begin{equation*}
\Gamma _{N,\eta }=\{\omega :\exists m\geq \zeta _{N}^{H}\text{ with }%
\left\vert \frac{1}{m}\sum_{n=N+1}^{N+m}Y_{n}-\mathbb{E}(Y_{1})\right\vert
>\eta \},
\end{equation*}%
then for a new constant $C_{1},$%
\begin{equation*}
\sum_{N=1}^{\infty }\mathcal{P(}\Gamma _{N,\eta })\leq \sum_{N}\sum_{m=\zeta
_{N}^{H}}^{\infty }Ce^{-cm}\leq \sum_{N}C_{1}e^{-c\zeta _{N}^{H}}<\infty .
\end{equation*}%
By the Borel-Cantelli lemma this means $\mathcal{P(}\Gamma _{N,\eta }$ i.o.$%
)=0$.

Similarly, if we let $\Gamma _{N,\eta }^{\prime }=\{\omega :\exists m\geq
\zeta _{N}^{H}$ with $\left\vert \sum_{n=N+1}^{N+m}Z_{n}-\mathbb{E}%
(Z_{1})\right\vert >\eta \},$ then for a suitable choice of $H$ we have $%
\mathcal{P(}\Gamma _{N,\eta }^{\prime }$ i.o.$)=0$.

Hence there is a set $\Omega (\eta ),$ of full measure, with the property
that for each $\omega \in \Omega (\eta )$ there is some $N_{\eta }=N_{\eta
}(\omega )$ such that for all $N\geq N_{\eta }$ and all $m\geq \zeta
_{N}^{H},$ we have both 
\begin{equation*}
\left\vert \frac{1}{m}\sum_{n=N+1}^{N+m}Y_{n}-\mathbb{E}(Y_{1})\right\vert
\leq \eta \text{ and }\left\vert \frac{1}{m}\sum_{n=N+1}^{N+m}Z_{n}-\mathbb{E%
}(Z_{1})\right\vert \leq \eta \text{. }
\end{equation*}%
and therefore, 
\begin{equation*}
\left\vert \frac{\sum_{n=N+1}^{N+m}Y_{n}}{\sum_{n=N+1}^{N+m}Z_{n}}-\frac{%
\mathbb{E}(Y_{1})}{\mathbb{E}(Z_{1})}\right\vert \leq \delta .
\end{equation*}%
That completes the proof.
\end{proof}

\subsection{Proof of the Theorem}

\begin{proof}
\lbrack Proof of Theorem \ref{MainLarge}] (i) For each positive integer $j,$
let 
\begin{equation*}
\Phi _{j}(t)=\frac{j\log \left\vert \log t\right\vert }{\left\vert \log
t\right\vert }\text{ and }\zeta _{N}^{j}=\frac{j\log (N\left\vert \log
B\right\vert )}{\left\vert \log A\right\vert }.
\end{equation*}

Consider any $N,m\in \mathbb{N}$, $\psi >0$, Moran interval $I_{N}(\omega )$
and descendent interval $I_{N+m}(\omega )$. If $I_{N}=I_{v}$ for $%
v=v_{1}\cdot \cdot \cdot v_{N}$ with $v_{i}\in \{0,1\}$ and $%
I_{N+m}=I_{vv_{N+1}\cdot \cdot \cdot v_{N+m}}$, then 
\begin{equation*}
\frac{\mu _{\omega }(I_{N})}{\mu _{\omega }(I_{N+m})}=\left( \prod 
_{\substack{ v_{N+i}=0,  \\ i=1,...,m}}p_{N+i}(\omega )\cdot \prod 
_{\substack{ v_{N+i}=1,  \\ i=1,...,m}}(1-p_{N+i}(\omega ))\right) ^{-1}
\end{equation*}%
and%
\begin{equation*}
\frac{\left\vert I_{N}\right\vert }{\left\vert I_{N+m}\right\vert }=\left(
\prod_{\substack{ v_{N+i}=0,  \\ i=1,...,m}}a_{N+i}(\omega )\cdot \prod 
_{\substack{ v_{N+i}=1,  \\ i=1,...,m}}b_{N+i}(\omega )\right) ^{-1}.
\end{equation*}%
Thus, for any $\psi $, 
\begin{eqnarray}
\frac{\frac{\mu _{\omega }(I_{N})}{\mu _{\omega }(I_{N+m})}}{\left( \frac{%
\left\vert I_{N}\right\vert }{\left\vert I_{N+m}\right\vert }\right) ^{\psi }%
} &=&\left( \prod_{\substack{ v_{N+i}=0,  \\ i=1,...,m}}\frac{a_{N+i}^{\psi }%
}{p_{N+i}}\right) \left( \prod_{\substack{ v_{N+i}=1,  \\ i=1,...,m}}\frac{%
b_{N+i}^{\psi }}{1-p_{N+i}}\right)  \notag \\
&\leq &\prod_{i=N+1}^{N+m}\max \left( \frac{a_{i}^{\psi }(\omega )}{%
p_{i}(\omega )},\frac{b_{i}^{\psi }(\omega )}{1-p_{i}(\omega )}\right) .
\label{eq:YandZ}
\end{eqnarray}%
Now 
\begin{equation*}
\frac{a_{i}^{\psi }}{p_{i}}\geq \frac{b_{i}^{\psi }}{1-p_{i}}\text{ if and
only if }p_{i}\leq \frac{a_{i}^{\psi }}{a_{i}^{\psi }+b_{i}^{\psi }},
\end{equation*}%
hence 
\begin{equation}
\frac{\mu _{\omega }(I_{N})}{\mu _{\omega }(I_{N+m})}\leq \left( \frac{%
\left\vert I_{N}\right\vert }{\left\vert I_{N+m}\right\vert }\right) ^{\psi }
\label{P1}
\end{equation}%
if 
\begin{equation}
\left( \prod_{\substack{ i=N+1,...,N+m;  \\ p_{i}\leq a_{i}^{\psi
}/(a_{i}^{\psi }+b_{i}^{\psi })}}\frac{a_{i}^{\psi }}{p_{i}}\right) \left(
\prod_{\substack{ i=N+1,...,N+m;  \\ p_{i}>a_{i}^{\psi }/(a_{i}^{\psi
}+b_{i}^{\psi })}}\frac{b_{i}^{\psi }}{(1-p_{i})}\right) \leq 1.  \label{P4}
\end{equation}%
Taking logarithms, we see that (\ref{P4}) is equivalent to the statement%
\begin{equation}
\psi \geq \frac{\sum_{i=N+1}^{N+m}Y_{i}(\psi )(\omega )}{%
\sum_{i=N+1}^{N+m}Z_{i}(\psi )(\omega )}.  \label{P2}
\end{equation}

Finally, assume $G(\psi )<\psi $, say $G(\psi )\leq \psi -2\delta $ for some 
$\delta >0$. According to the probabilistic result, Lemma \ref{PR}, there is
a set $\Omega _{j,\psi }$, depending on both $j$ and $\psi $ and of full
measure in $\Omega $, such that for each $\omega \in \Omega _{j,\psi }$
there is some integer $N_{j}=N_{j}(\omega )$ such that for all $N\geq N_{j}$
and all $m\geq \zeta _{N}^{j},$%
\begin{equation*}
\left\vert \frac{\sum_{n=N+1}^{N+m}Y_{n}(\psi )(\omega )}{%
\sum_{n=N+1}^{N+m}Z_{n}(\psi )(\omega )}-\frac{\mathbb{E}(Y_{1}(\psi ))}{%
\mathbb{E}(Z_{1}(\psi ))}\right\vert =\left\vert \frac{%
\sum_{n=N+1}^{N+m}Y_{n}(\psi )(\omega )}{\sum_{n=N+1}^{N+m}Z_{n}(\psi
)(\omega )}-G(\psi )\right\vert \leq \delta .
\end{equation*}%
Consequently,%
\begin{equation*}
\frac{\sum_{n=N+1}^{N+m}Y_{n}(\psi )(\omega )}{\sum_{n=N+1}^{N+m}Z_{n}(%
\psi)(\omega )}\leq G(\psi )+\delta \leq \psi -\delta <\psi .
\end{equation*}%
Thus our previous observations imply that for each $\omega \in \Omega
_{j,\psi }$ there is an integer $N_{j}$ such that for all $N\geq N_{j}$ and
all $m\geq \zeta _{N}^{j},$%
\begin{equation}
\frac{\mu _{\omega }(I_{N})}{\mu _{\omega }(I_{N+m})}\leq \left( \frac{%
\left\vert I_{N}\right\vert }{\left\vert I_{N+m}\right\vert }\right) ^{\psi
}.  \label{P3}
\end{equation}

Next, suppose $\omega \in \Omega _{j,\psi },N_{j}=N_{j}(\omega )$ is as
above and $x\in \mathcal{C}_{\omega }$. Choose $N\geq N_{j}$ and $m$ so that 
$|I_{N}(x)|<A^{N_{j}+L}$ and 
\begin{equation*}
|I_{N+m}(x)|\leq |I_{N}(x)|^{1+\Phi (|I_{N}(x)|)}.
\end{equation*}%
Then Lemma \ref{prelim}(i) implies $m\geq \zeta _{N}^{j}$ and so by (\ref{P3}%
) and Lemma \ref{lem:intervals_give_dimension} we know that $\overline{\dim }%
_{\Phi }\mu _{\omega }\leq \psi $ for all $\omega \in \Omega _{j,\psi }$.

Now, let $\Phi $ be any large dimension function and 
\begin{equation*}
\omega \in \Gamma _{\psi }=\bigcap_{j=1}^{\infty }\Omega _{j,\psi },
\end{equation*}%
again a set of full measure. There exists $j$ such that $\Phi \left(
t\right) \geq \Phi _{j}(t)$ for $t$ sufficiently close to $0$. As $\omega
\in \Omega _{j,\psi },$ $\overline{\dim }_{\Phi }\mu _{\omega }\leq 
\overline{\dim }_{\Phi _{j}}\mu _{\omega }\leq \psi $. It follows that $%
\overline{\dim }_{\Phi }\mu _{\omega }\leq \psi $ for all $\omega \in \Gamma
_{\psi }$ and all large dimension functions $\Phi $.

\smallskip

(ii) Given $\omega ,$ consider the Moran intervals which arise by choosing
the left child at step $n$ if 
\begin{equation*}
\frac{a_{n}^{\psi }(\omega )}{p_{n}(\omega )}=\max \left( \frac{%
a_{n}^{\psi}(\omega )}{p_{n}(\omega )},\frac{b_{n}^{\psi }(\omega )}{%
1-p_{n}(\omega )}\right)
\end{equation*}%
and the right child otherwise. Call the interval at step $n$ which arises by
this construction $I_{n}=I_{n}(\psi ,\omega )$. These form a nested sequence
of Moran intervals.

For $n>N,$ 
\begin{equation*}
\frac{\mu _{\omega }(I_{N})}{\mu _{\omega }(I_{n})}= \prod_{\substack{ i=N+1 
\\ p_{i}\leq a_{i}^{\psi }/(a_{i}^{\psi }+b_{i}^{\psi })}}^{n}p_{i}^{-1}
\prod _{\substack{ i=N+1  \\ p_{i}>a_{i}^{\psi }/(a_{i}^{\psi}+b_{i}^{\psi
}) }}^{n}(1-p_{i})^{-1}
\end{equation*}%
and 
\begin{equation*}
\frac{\left\vert I_{N}\right\vert }{\left\vert I_{n}\right\vert }%
=\prod_{p_{i}\leq a_{i}^{\psi }/(a_{i}^{\psi }+b_{i}^{\psi})}a_{i}^{-1}
\prod_{p_{i}>a_{i}^{\psi }/(a_{i}^{\psi }+b_{i}^{\psi})}b_{i}{}^{-1}.
\end{equation*}%
Thus, for any $\beta >0,$ 
\begin{equation*}
\frac{\mu _{\omega }(I)}{\mu _{\omega }(I_{n})}\geq \left( \frac{\left\vert
I_{N}\right\vert }{\left\vert I_{n}\right\vert }\right) ^{\beta }
\end{equation*}%
if and only if 
\begin{equation*}
\sum_{i=N+1}^{n}Y_{i}(\psi )(\omega )\leq \beta
\sum_{i=N+1}^{n}Z_{i}(\psi)(\omega )
\end{equation*}%
if and only if (writing $n=N+m)$ 
\begin{equation*}
\frac{\sum_{i=N+1}^{N+m}Y_{i}(\psi )(\omega )}{\sum_{i=N+1}^{N+m}Z_{i}(%
\psi)(\omega )}\geq \beta .
\end{equation*}

Fix $\delta >0$ and choose the constant function $H=H(\delta )$ so large
that Lemma \ref{PR} guarantees that there is a set $\Omega _{\delta,\psi},$
of full measure, such that for all $\omega \in \Omega _{\delta,\psi}$ and $N$
sufficiently large, 
\begin{equation*}
\left\vert \frac{\sum_{i=N+1}^{N+m}Y_{i}(\psi)(\omega )}{%
\sum_{i=N+1}^{N+m}Z_{i}(\psi)(\omega )}-G(\psi )\right\vert \leq \delta 
\text{ for all }m\geq \zeta _{N}^{H},
\end{equation*}%
and hence%
\begin{equation*}
\frac{\sum_{i=N+1}^{N+m}Y_{i}}{\sum_{i=N+1}^{N+m}Z_{i}}\geq G(\psi )-\delta
\geq \psi -\delta \text{.}
\end{equation*}%
It follows that%
\begin{equation*}
\frac{\mu _{\omega }(I_{N})}{\mu _{\omega }(I_{N+m})}\geq \left( \frac{%
\left\vert I_{N}\right\vert }{\left\vert I_{N+m}\right\vert }\right) ^{\psi
-\delta }
\end{equation*}%
for all $\omega \in \Omega _{\delta,\psi}$, $m\geq \zeta _{N}^{H}$ and $N$
sufficiently large.

Now, take $\delta _{j}=1/j$, let $H(\delta _{j})=H_{j}$ and $\Omega_j=\Omega
_{\delta_j,\psi}$. Let $\Gamma_\psi =\bigcap_{j}\Omega _{j}$, a set of full
measure. As $\Phi $ is a large dimension function, for any $j$ there exists $%
t_{j}>0$ such that $\Phi (t)=H(t)\log \left\vert \log t\right\vert
/\left\vert \log t\right\vert $ where $H(t)\geq H_{j}$ for $t\leq t_{j}$.
Consequently, for large $N,$ $\zeta _{N}^{H}\geq \zeta _{N}^{H_{j}}$. If $%
\omega \in \Gamma_\psi$, then $\omega \in \Omega _{j}$ and therefore 
\begin{equation*}
\frac{\mu _{\omega }(I_{N})}{\mu _{\omega }(I_{N+\zeta _{N}^{H}})}\geq
\left( \frac{\left\vert I_{N}\right\vert }{\left\vert I_{N+\zeta
_{N}^{H}}\right\vert }\right) ^{\psi -1/j}
\end{equation*}%
for all $N$ sufficiently large. It follows that for all $\omega \in
\Gamma_\psi ,$ $\overline{\dim }_{\Phi }\mu _{\omega }\geq \psi -1/j$ and
since this is true for all $j$, we must have $\overline{\dim }_{\Phi }\mu
_{\omega }\geq \psi $ as claimed.

The arguments for the lower $\Phi $-dimension are very similar, but rather
than considering $\max \left( \frac{a_{n}^{\psi }}{p_{n}},\frac{b_{n}^{\psi }%
}{1-p_{n}}\right) ,$ we study $\min \left( \frac{a_{n}^{\psi }}{p_{n}},\frac{%
b_{n}^{\psi }}{1-p_{n}}\right) $. Thus the functions $Y_{n}^{\prime},Z_{n}^{%
\prime }$ and $G^{\prime }$ arise in place of $Y_{n},Z_{n}$ and $G$. The
details are left for the reader. \newline
\end{proof}

\subsection{Consequences of the Theorem}

We continue to use the notation introduced earlier. In particular, $G$ is as
defined in (\ref{G}). Since positive constant functions are large dimension
functions, the following corollary follows directly from the theorem.

\begin{corollary}
\label{cor:large_quasi} (i) If $G(\psi )<\psi $, then $\dim _{qA}\mu
_{\omega }\leq \psi $ a.s.

(ii) If $G(\psi )\geq \psi $, then $\dim _{qA}\mu _{\omega }\geq \psi $ a.s.
\end{corollary}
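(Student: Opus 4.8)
The plan is to deduce this directly from Theorem \ref{MainLarge}, as the remark preceding the statement suggests, by exploiting the fact that the dimension functions used to \emph{define} the quasi-Assouad dimension are themselves large dimension functions. First I would verify that every positive constant function is large. Writing a constant $\Phi(t)=\delta>0$ in the canonical form $\Phi(t)=H(t)\log|\log t|/|\log t|$ forces $H(t)=\delta|\log t|/\log|\log t|$; substituting $u=|\log t|\to\infty$ as $t\to 0^+$ gives $H=\delta\, u/\log u\to\infty$, so $\Phi$ is large (it is a genuine dimension function since $t^{1+\delta}\downarrow 0$). In particular, for each $0<\theta<1$ the function $\Phi_{\theta}=1/\theta-1>0$ is a positive constant, hence a large dimension function, and the upper quasi-Assouad dimension is recovered as $\dim_{qA}\mu_{\omega}=\lim_{\theta\to 1}\overline{\dim}_{\Phi_{\theta}}\mu_{\omega}$.

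For part (i), assume $G(\psi)<\psi$. Theorem \ref{MainLarge}(i) supplies a single full-measure set $\Gamma_{\psi}$ on which $\overline{\dim}_{\Phi}\mu_{\omega}\leq\psi$ holds \emph{simultaneously} for every large dimension function $\Phi$. Specializing to the family $\{\Phi_{\theta}\}_{0<\theta<1}$ gives $\overline{\dim}_{\Phi_{\theta}}\mu_{\omega}\leq\psi$ for all $\theta\in(0,1)$ and all $\omega\in\Gamma_{\psi}$; letting $\theta\to 1$ passes the bound through the defining limit and yields $\dim_{qA}\mu_{\omega}\leq\psi$ on $\Gamma_{\psi}$, which is exactly the almost-sure statement. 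Part (ii) is identical in structure: when $G(\psi)\geq\psi$, Theorem \ref{MainLarge}(ii) gives a full-measure set on which $\overline{\dim}_{\Phi_{\theta}}\mu_{\omega}\geq\psi$ for every $\theta$, and taking $\theta\to 1$ produces $\dim_{qA}\mu_{\omega}\geq\psi$ almost surely.

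The only point requiring any care — and it is where the earlier work already paid off — is that Theorem \ref{MainLarge} delivers its conclusion \emph{uniformly} over the whole class of large $\Phi$ on one common set $\Gamma_{\psi}$, rather than a separate null-set exception for each $\Phi$. This uniformity is what lets the same $\Gamma_{\psi}$ serve the entire family $\{\Phi_{\theta}\}$ and permits interchanging the almost-sure bound with the limit defining $\dim_{qA}$; no additional null-set bookkeeping or appeal to a monotone-comparison principle is needed, since the constant functions $\Phi_{\theta}$ already lie in the class the theorem covers. There is thus essentially no analytic obstacle here: all the probabilistic content was carried by Lemma \ref{PR} and Theorem \ref{MainLarge}, and this corollary is merely the specialization to constant dimension functions.
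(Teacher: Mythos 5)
Your proposal is correct and follows exactly the paper's route: the paper disposes of this corollary with the single remark that positive constant functions are large dimension functions, so Theorem \ref{MainLarge} applies to each $\Phi_{\theta}=1/\theta-1$ and the bound survives the limit $\theta\to 1$ defining $\dim_{qA}$. Your write-up merely makes explicit the verification that constants are large and the role of the single full-measure set $\Gamma_{\psi}$, both of which are implicit in the paper's one-line argument.
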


Similar statements hold for $G^{\prime }$ and the quasi-lower Assouad
dimension.

A useful fact, which we show below, is that continuous functions $G$ (or $%
G^{\prime }$) typically satisfy the hypotheses of Corollaries \ref%
{cor:large_dim} and \ref{cor:Gfuncroot}. This is often the situation, c.f. (%
\ref{GFormula}) where it is shown that $G$ is even differentiable when $%
a_{n}=a,$ $b_{n}=b$ and $p_{n} $ is uniformly distributed over $[0,1]$. More
generally, $G$ is continuous if $p_{n}$ has a density distribution of the
form $f(t)dt$, where $f(t)\log t $ and $f(t)\log (1-t)$ are integrable over $%
[0,1]$, such as when $f$ is bounded.

\begin{lemma}
\label{alpha} Assume $\left\vert \mathbb{E}(\log p_{1})\right\vert $, $%
\left\vert \mathbb{E}(\log (1-p_{1}))\right\vert <\infty $. If $G(\theta )$
is continuous, then there is a unique choice of $\alpha $ such that $%
G(\alpha )=\alpha $ and $G(\psi )<\psi $ if $\psi >\alpha $.
\end{lemma}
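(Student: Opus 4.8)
The plan is to study the continuous function $h(\theta)=G(\theta)-\theta$ on $[0,\infty)$ and to locate $\alpha$ as the largest point where $G$ meets the diagonal. First I would record the two sign facts that make $G$ well behaved. Since $p_1\in(0,1)$ almost surely, both $\log p_1$ and $\log(1-p_1)$ are negative, so $Y_1(\theta)\le 0$ pointwise and hence $\mathbb{E}(Y_1(\theta))\le 0$; and since the admissible pairs satisfy $A\le\min(a_1,b_1)$ and $a_1+b_1\le B<1$, each of $a_1,b_1$ lies in $[A,B)$, so $Z_1(\theta)\le\log B<0$ and therefore $\mathbb{E}(Z_1(\theta))\le\log B<0$. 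In particular $G(\theta)$ is a ratio of two nonpositive quantities with strictly negative denominator, so $G(\theta)\ge 0$ for every $\theta$; in particular $h(0)=G(0)\ge 0$.

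The key estimate is a uniform (in $\theta$) upper bound for $G$. Since $Y_1(\theta)$ equals either $\log p_1$ or $\log(1-p_1)$, we have the pointwise bound $|Y_1(\theta)|\le|\log p_1|+|\log(1-p_1)|$, and the hypothesis that $|\mathbb{E}(\log p_1)|$ and $|\mathbb{E}(\log(1-p_1))|$ are finite (together with $\log p_1,\log(1-p_1)\le 0$) gives $\mathbb{E}|Y_1(\theta)|\le M$, where $M:=|\mathbb{E}(\log p_1)|+|\mathbb{E}(\log(1-p_1))|<\infty$ is independent of $\theta$. Combining this with $|\mathbb{E}(Z_1(\theta))|\ge|\log B|>0$ yields
\[
0\le G(\theta)=\frac{|\mathbb{E}(Y_1(\theta))|}{|\mathbb{E}(Z_1(\theta))|}\le\frac{M}{|\log B|}=:M_0
\]
for all $\theta\ge 0$. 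Thus $h(\theta)\le M_0-\theta<0$ as soon as $\theta>M_0$, so $G$ eventually lies strictly below the diagonal.

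With these two facts in hand, existence and uniqueness follow by a supremum argument. Set $S=\{\theta\ge 0:G(\theta)\ge\theta\}$. It is nonempty since $0\in S$, and it is bounded above by $M_0$ (if $G(\theta)\ge\theta$ then $\theta\le G(\theta)\le M_0$); moreover $S$ is closed because $G$ is assumed continuous. Hence $\alpha:=\sup S$ is finite and belongs to $S$, so $G(\alpha)\ge\alpha$, while $G(\psi)<\psi$ for every $\psi>\alpha$ by definition of the supremum. If we had $G(\alpha)>\alpha$, continuity would force $G(\theta)>\theta$ on a neighbourhood of $\alpha$, contradicting $\alpha=\sup S$; therefore $G(\alpha)=\alpha$. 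Finally, uniqueness is immediate: if $\alpha_1<\alpha_2$ both satisfied the stated conditions, then the property of $\alpha_1$ would give $G(\alpha_2)<\alpha_2$ (as $\alpha_2>\alpha_1$), contradicting $G(\alpha_2)=\alpha_2$.

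The routine parts are the intermediate-value and supremum bookkeeping; the one step deserving care — and the crux of the argument — is the uniform bound on $G$. It is precisely the integrability hypothesis on $\log p_1$ and $\log(1-p_1)$ that keeps the numerator $\mathbb{E}(Y_1(\theta))$ bounded, and the separation constraints $A\le\min(a_1,b_1)$ and $a_1+b_1\le B$ that keep the denominator $\mathbb{E}(Z_1(\theta))$ bounded away from zero; without a $\theta$-independent ceiling for $G$ one could not conclude that $G$ must cross, and then stay below, the diagonal.
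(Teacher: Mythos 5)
Your proof is correct, but it reaches the key fact --- that $G$ eventually lies strictly below the diagonal --- by a genuinely different route than the paper. The paper's proof computes the asymptotic behaviour of $G$: under the extra side assumption $\mathcal{P}(a_1=b_1)=0$ (with the complementary case left to the reader), the variables $Y_1(\theta)$ and $Z_1(\theta)$ converge pointwise as $\theta\to\infty$ (to $\log(1-p_1)$ and $\log b_1$ when $a_1<b_1$, to $\log p_1$ and $\log a_1$ when $a_1>b_1$), so after an interchange of limit and expectation $G(\theta)$ tends to an explicit finite constant; combined with the computation $G(0)>0$ and continuity, $G$ must cross and then stay below the diagonal. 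You instead prove the $\theta$-independent bound
\[
0\le G(\theta)\le \frac{\left\vert \mathbb{E}(\log p_{1})\right\vert +\left\vert \mathbb{E}(\log (1-p_{1}))\right\vert }{\left\vert \log B\right\vert },
\]
using only the pointwise inequalities $\left\vert Y_{1}(\theta )\right\vert \leq \left\vert \log p_{1}\right\vert +\left\vert \log (1-p_{1})\right\vert$ and $Z_{1}(\theta )\leq \log B<0$. This is more elementary (no pointwise-limit computation or dominated-convergence step) and slightly more general, since it makes no assumption about $\mathcal{P}(a_1=b_1)$; on the other hand, the paper's computation yields the actual limiting value of $G$ at infinity, which your bound does not. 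Both arguments conclude in essentially the same way; your supremum argument over $S=\{\theta \geq 0: G(\theta )\geq \theta \}$, together with the two-line uniqueness check, simply makes explicit the ``crossing the diagonal'' step that the paper compresses into its final sentence.
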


\begin{proof}
We will assume that $\mathcal{P}(a_{n}=b_{n})=0$ and leave the contrary case
to the reader. Note that as $\theta \rightarrow \infty ,$ 
\begin{equation*}
\frac{a^{\theta }}{a^{\theta }+b^{\theta }}\rightarrow \left\{ 
\begin{array}{cc}
0 & \text{if }a<b \\ 
1 & \text{if }a>b%
\end{array}%
\right. ,
\end{equation*}%
and therefore 
\begin{equation*}
Y_{1}(\theta )(\omega )\rightarrow \left\{ 
\begin{array}{cc}
\log (1-p_{1}(\omega )) & \text{if }a_{1}(\omega )<b_{1}(\omega ) \\ 
\log p_{1}(\omega ) & \text{if }a_{1}(\omega )>b_{1}(\omega )%
\end{array}%
\right. \quad \text{ as }\theta \rightarrow \infty
\end{equation*}%
and 
\begin{equation*}
Z_{1}(\theta )(\omega )\rightarrow \left\{ 
\begin{array}{cc}
\log b_{1}(\omega ) & \text{if }a_{1}(\omega )<b_{1}(\omega ) \\ 
\log a_{1}(\omega ) & \text{if }a_{1}(\omega )>b_{1}(\omega )%
\end{array}%
\right. \quad \text{ as }\theta \rightarrow \infty .
\end{equation*}%
Hence 
\begin{equation*}
G(\theta )\rightarrow \frac{\mathcal{P}(a_{1}<b_{1})\mathbb{E}(\log
(1-p_{1}))+\mathcal{P}(a_{1}>b_{1})\mathbb{E}(\log p_{1})}{\mathcal{P}%
(a_{1}<b_{1})\mathbb{E}(\log b_{1})+\mathcal{P}(a_{1}>b_{1})\mathbb{E}(\log
a_{1})}\text{ as }\theta \rightarrow \infty.
\end{equation*}%
In particular, $G$ approaches a (finite) constant as $\theta \rightarrow
\infty $.

On the other hand, 
\begin{equation*}
G(0 ) = \frac{\mathbb{E}(\log p_{1}|_{p_{1}\leq 1/2})+\mathbb{E}(\log
(1-p_{1})|_{p_{1}>1/2})}{\mathcal{P}(p_{1}\leq 1/2)\mathbb{E}(\log a_{1})+%
\mathcal{P}(p_{1}>1/2)\mathbb{E}(\log b_{1})}\text{ }>0.
\end{equation*}

Since $G$ is continuous, $G(0)>0$ and eventually $G(\theta )<\theta $, there
must be a unique choice of $\alpha $ such that $G(\alpha )=\alpha $ and if $%
\psi >\alpha ,$ then $G(\psi )<\psi $.
\end{proof}

\begin{corollary}
\label{Cor:Cont} Suppose $\left\vert \mathbb{E}(\log p_{1})\right\vert $, $%
\left\vert \mathbb{E}(\log (1-p_{1}))\right\vert <\infty $ and $G(\theta )$
is continuous$.$ Then $\overline{\dim }_{\Phi }\mu _{\omega }=\alpha $ a.s.
where $G(\alpha )=\alpha $ and $G(\psi )<\psi $ for all $\psi >\alpha $.
\end{corollary}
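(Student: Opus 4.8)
The plan is to obtain this statement as a direct combination of the two results immediately preceding it, so that essentially all the real work has already been done. The first move is to verify that the hypotheses here are precisely those required to produce the critical exponent $\alpha$. Under the standing assumptions that $\left\vert \mathbb{E}(\log p_{1})\right\vert$ and $\left\vert \mathbb{E}(\log(1-p_{1}))\right\vert$ are finite and that $G(\theta)$ is continuous, I would invoke Lemma \ref{alpha}. That lemma delivers a \emph{unique} value $\alpha$ satisfying $G(\alpha)=\alpha$ together with the one-sided condition $G(\psi)<\psi$ for every $\psi>\alpha$; this uniqueness guarantees that the quantity we are computing is unambiguously defined.

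The second move is to feed this $\alpha$ into Corollary \ref{cor:large_dim}. The hypothesis of that corollary is exactly the statement ``there is a choice of $\alpha$ such that $G(\alpha)=\alpha$ and $G(\psi)<\psi$ if $\psi>\alpha$'', which Lemma \ref{alpha} has just supplied. Corollary \ref{cor:large_dim} then yields a set $\Gamma\subseteq\Omega$ of full measure on which $\overline{\dim}_{\Phi}\mu_{\omega}=\alpha$ for \emph{all} large dimension functions $\Phi$ simultaneously. Reading ``full measure'' as ``almost surely'' gives precisely the claimed equality $\overline{\dim}_{\Phi}\mu_{\omega}=\alpha$ a.s., with $\alpha$ the root described in the statement.

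Since this is a two-line deduction rather than a fresh argument, there is no genuine obstacle to overcome: the probabilistic content lives in the Chernov/Borel--Cantelli argument of Lemma \ref{PR} that underlies Theorem \ref{MainLarge}, and the analytic content (continuity of $G$ forcing a crossing of the diagonal with the correct one-sided behaviour) lives in Lemma \ref{alpha}. The only point that warrants a sentence of care is confirming that the hypotheses match verbatim, namely that the integrability assumptions of Lemma \ref{alpha} and the continuity of $G$ are exactly what is assumed in the corollary, so that no additional regularity on $\Phi$ or on the distributions of $a_n,b_n,p_n$ need be imposed. With that alignment checked, the proof is complete.
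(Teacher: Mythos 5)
Your proposal is correct and is precisely the paper's intended argument: Corollary \ref{Cor:Cont} is stated without a separate proof exactly because it is the immediate combination of Lemma \ref{alpha} (which supplies the unique $\alpha$ with $G(\alpha)=\alpha$ and $G(\psi)<\psi$ for $\psi>\alpha$) with Corollary \ref{cor:large_dim}. Your check that the hypotheses of Lemma \ref{alpha} coincide verbatim with those of the corollary is the only point needing verification, and it holds.
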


The upper $\Phi $-dimension of $\mu $ is always an upper bound for the upper
local dimension of $\mu $ at any point $x$, where the latter is defined by 
\begin{equation}  \label{eq:upperlocaldim}
\overline{\dim }_{\text{loc}}\mu (x)=\limsup_{r\rightarrow 0}\frac{\log \mu
(B(x,r))}{\log r}.
\end{equation}

In a similar way the lower $\Phi$-dimension of $\mu$ is always a lower bound
for the lower local dimension (defined as in (\ref{eq:upperlocaldim}) but
using a liminf). However, in general it is possible for $\underline{\dim}%
_\Phi \mu < \inf_x \underline{\dim}_{\text{loc}}\mu(x)$ and $\sup_{x}%
\overline{\dim }_{\text{loc}}\mu (x)<\overline{\dim }_{\Phi }\mu $. (See 
\cite{HH} for proofs of these statements.) In the case of our random
measures, there is no gap for either inequality.

\begin{proposition}
Assume $G(\psi )<\psi $ for all $\psi >\theta $ and $G(\theta )=\theta $.
Then for any large dimension function $\Phi $ and almost all $\omega $ we
have that 
\begin{equation*}
\sup_{x}\overline{\dim }_{\rm{loc}}\mu _{\omega }(x)=\theta =\overline{%
\dim }_{\Phi }\mu _{\omega }.
\end{equation*}%
Similarly, if $G^{\prime }(\psi )>\psi $ for all $\psi <\theta ^{\prime }$
and $G^{\prime }(\theta ^{\prime })=\theta ^{\prime }$ then for any large
dimension function $\Phi $ and almost all $\omega $ we have that 
\begin{equation*}
\inf_{x}\underline{\dim }_{\rm{loc}}\mu _{\omega }(x)=\theta ^{\prime }=%
\underline{\dim }_{\Phi }\mu _{\omega }.
\end{equation*}
\end{proposition}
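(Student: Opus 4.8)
The plan is to prove each of the two displayed equalities by pairing an upper bound with a matching lower bound, treating the upper statement in full and noting that the lower statement is entirely symmetric (replace $Y_n, Z_n, G$ and the ``max'' branch of Theorem \ref{MainLarge}(ii) by $Y_n', Z_n', G'$ and the corresponding ``min'' branch, and reverse all inequalities). For the upper statement, the right-hand identity $\theta = \overline{\dim}_\Phi\mu_\omega$ is exactly Corollary \ref{cor:large_dim} applied with $\alpha = \theta$, which holds for all large $\Phi$ simultaneously on one full-measure set $\Gamma$. The inequality $\sup_x \overline{\dim}_{\mathrm{loc}}\mu_\omega(x) \le \overline{\dim}_\Phi\mu_\omega$ is the general fact recorded just before the statement (the upper $\Phi$-dimension dominates the local upper dimension at every point; see \cite{HH}). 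Hence it remains only to produce, for almost every $\omega$, a single point $x \in C_\omega$ with $\overline{\dim}_{\mathrm{loc}}\mu_\omega(x) \ge \theta$.

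For this lower bound I would reuse the extremal nested sequence of Moran intervals constructed in the proof of Theorem \ref{MainLarge}(ii), specialized to $\psi = \theta$: at level $n$ choose the child realizing $\max\!\big(a_n^\theta/p_n,\, b_n^\theta/(1-p_n)\big)$, producing a point $x = x(\theta,\omega) \in C_\omega$. The key observation is that along this branch the contributions to $\log\mu_\omega(I_N(x))$ and to $\log|I_N(x)|$ are precisely $Y_i(\theta)(\omega)$ and $Z_i(\theta)(\omega)$, so that
\[
\frac{\log\mu_\omega(I_N(x))}{\log|I_N(x)|} = \frac{\sum_{i=1}^{N} Y_i(\theta)(\omega)}{\sum_{i=1}^{N} Z_i(\theta)(\omega)}.
\]
Since $a_1, b_1 \in [A,B] \subset (0,1)$, the variable $Z_1(\theta) \in \{\log a_1, \log b_1\} \subseteq [\log A, \log B]$ is bounded with $\mathbb{E}(Z_1(\theta)) < 0$, while $\mathbb{E}|Y_1(\theta)| < \infty$ follows from the moment hypotheses $\mathbb{E}(p_n^{-t}), \mathbb{E}((1-p_n)^{-t}) < \infty$. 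Thus the ordinary strong law of large numbers gives a full-measure set $\Omega_0$ on which the above ratio converges to $\mathbb{E}(Y_1(\theta))/\mathbb{E}(Z_1(\theta)) = G(\theta) = \theta$. I emphasize that here the plain SLLN suffices: because we follow a single infinite branch rather than controlling all tail segments at once, the uniform-in-$m$ estimate of Lemma \ref{PR} is not needed.

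Finally I would transfer this interval statement to balls via Lemma \ref{L1}. For $R$ with $|I_N(x)| \le R < |I_{N-1}(x)|$ one has $\mu_\omega(I_N(x)) \le \mu_\omega(B(x,R)) \le \mu_\omega(I_{N-L}(x))$ and $\log|I_N(x)| \le \log R < \log|I_{N-1}(x)|$; writing $S_k^Y = \sum_{i \le k} Y_i(\theta)$ and $S_k^Z = \sum_{i \le k} Z_i(\theta)$, the quotient $\log\mu_\omega(B(x,R))/\log R$ is then squeezed between $S_{N-L}^Y / S_N^Z$ and $S_N^Y / S_{N-1}^Z$. Since $Y_N, Z_N = o(N)$ (a consequence of the SLLN), the fixed shifts by $L$ and by $1$ are negligible after dividing through by $N$, and both bounding ratios tend to $\theta$; hence $\overline{\dim}_{\mathrm{loc}}\mu_\omega(x) = \theta$ on $\Omega_0$. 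Intersecting $\Gamma$ with $\Omega_0$ gives a full-measure set on which all three quantities coincide, proving the claim. The main obstacle I anticipate is the bookkeeping in this last squeeze — in particular checking that the additive $L$- and $1$-level corrections vanish in the limit — but this is routine once one records that $\mathbb{E}(Z_1(\theta)) \ne 0$, so the denominators $S_N^Z$ grow linearly in $N$.
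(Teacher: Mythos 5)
Your proposal is correct and follows essentially the same route as the paper: the same extremal branch (choosing the child realizing $\max(a_n^\theta/p_n,\,b_n^\theta/(1-p_n))$), the same identification of $\log\mu_\omega(I_N(x))$ and $\log|I_N(x)|$ with $\sum Y_i(\theta)$ and $\sum Z_i(\theta)$, the same transfer to balls via Lemma \ref{L1}, and the same sandwich combining Corollary \ref{cor:large_dim} with the general fact from \cite{HH} that local upper dimensions are dominated by $\overline{\dim}_\Phi\mu$. The only cosmetic differences are that you invoke the SLLN explicitly and absorb the $L$-level shift by a two-sided squeeze, where the paper tracks it as an additive constant $C$ in the denominator; both are routine bookkeeping around the same estimate.
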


\begin{proof}
Put $v_{j}=0$ if $a_{j}^{\theta }/p_{j}=\max (a_{j}^{\theta
}/p_{j},b_{j}^{\theta }/(1-p_{j}))$ and $v_{j}=1$ else. Let $x\in
\bigcap_{n=1}^{\infty }I_{v_{1},....,v_{n}},$ so that $%
I_{n}(x)=I_{v_{1},....,v_{n}}$ for each $n$. Given any small $r>0,$ choose $%
n $ such that $\left\vert I_{n}(x)\right\vert \leq r<\left\vert
I_{n-1}(x)\right\vert $, so that $I_{n}(x)\subseteq B(x,r)\leq I_{n-L}(x)$.

We have 
\begin{equation*}
\mu_\omega (B(x,r))\leq \mu_\omega
(I_{n-L}(x))=\prod_{j=1;v_{j}=0}^{n-L}p_{j}%
\prod_{j=1;v_{j}=1}^{n-L}(1-p_{j}),
\end{equation*}%
so 
\begin{equation*}
\left\vert \log \mu_\omega (B(x,r))\right\vert \geq \left\vert
\sum_{i=1}^{n-L} Y_{i}(\theta )\right\vert .
\end{equation*}%
Similarly, 
\begin{equation*}
r\geq \prod_{j=1;v_{j}=0}^{n}a_{j}\prod_{j=1;v_{j}=1}^{n}b_{j}=\left(
\prod_{j=1;v_{j}=0}^{n-L}a_{j}\prod_{j=1;v_{j}=1}^{n-L}b_{j}\right) \left(
\prod_{\substack{ j=n-L+1;  \\ v_{j}=0}}^{n}a_{j}\prod_{\substack{ j=n-L+1; 
\\ v_{j}=1}}^{n}b_{j}\right) .
\end{equation*}%
As $a_{j},b_{j}$ are bounded away from $0$ and $L$ is fixed, there is some
constant $C>0$ such that%
\begin{equation*}
\left\vert \log r\right\vert \leq |\sum_{i=1}^{n-L} Z_i(\theta )|+C.
\end{equation*}

Hence%
\begin{equation*}
\frac{\left\vert \log \mu_\omega (B(x,r))\right\vert }{\left\vert \log
r\right\vert } \geq \frac{\left\vert \displaystyle \sum_{i=1}^{n-L}
Y_i(\theta )\right\vert }{\left|\displaystyle \sum_{i=1}^{n-L} Z_{i}(\theta
) \right|+C}.
\end{equation*}

Fix $\varepsilon >0$ and choose a set of full measure, $\Omega_{%
\varepsilon}, $ such that 
\begin{equation*}
\frac{ \displaystyle \sum_{i=1}^m Y_{i}(\theta )(\omega)}{ \displaystyle %
\sum_{i=1}^m Z_{i}(\theta )(\omega)} \geq G(\theta )-\varepsilon
\end{equation*}
for $m\geq m_{\omega }$ and each $\omega \in \Omega_\epsilon$. Further, as $%
\left\vert \sum_{i=1}^m Z_{i}\right\vert \rightarrow \infty $ as $%
m\rightarrow \infty $, given any $\delta >0$ we can choose $m_{0}$
sufficiently large so that for all $m\geq m_{0}$ we have $%
C\left|\sum_{i=1}^m Z_{i}(\theta )\right|^{-1} \leq \delta $. Thus, for all $%
\omega \in \Omega _{\varepsilon }$ and all $n\geq \max (m_{0},m_{\omega }) +
L,$%
\begin{equation*}
\frac{\left\vert \sum_{i=1}^{n-L} Y_{i}(\theta )(\omega)\right\vert }{\left|
\sum_{i=1}^{n-L} Z_{i}(\theta )(\omega) \right|+C}\geq \frac{G(\theta
)-\varepsilon }{1+C\left\vert \sum_{i=1}^{n-L} Z_{i}(\theta
)(\omega)\right\vert^{-1} }\geq \frac{\theta -\varepsilon }{1+\delta }.
\end{equation*}%
If we make the choice of $\delta $ sufficiently small, depending on $\theta
, $ then we can conclude that 
\begin{equation*}
\frac{\left\vert \log \mu _{\omega }(B(x,r))\right\vert }{\left\vert \log
r\right\vert }\geq \theta -2\varepsilon
\end{equation*}%
for sufficiently small $r$. By choosing the sequence $\varepsilon =1/k$ and
putting $\Omega _{0}=\bigcap_{k=1}^{\infty }\Omega _{1/k},$ we deduce that
for all $\omega \in \Omega _{0},$ a set of full measure%
\begin{equation*}
\overline{\dim}_{\text{loc}} \mu_\omega(x) = \limsup_{r\rightarrow 0}\frac{%
\left\vert \log \mu (B(x,r))\right\vert }{\left\vert \log r\right\vert }\geq
\theta .
\end{equation*}

The claim follows since it is always true that the supremum of the upper
local dimensions is dominated by $\overline{\dim }_{\Phi }\mu $ for any
dimension function $\Phi$ (see \cite{HH}) which, according to Corollary \ref%
{cor:large_dim}, is equal to $\theta$ almost everywhere.

The statement about the lower local dimension and $G^{\prime }$ is proved in
an analogous manner.
\end{proof}

\subsection{Example: The deterministic Moran set $\mathcal{C}_{ab}$}

\label{subsec:Cab}

Consider the deterministic Moran set $\mathcal{C}_{ab},$ which can be viewed
as a random Moran set where $(a_{n},b_{n})$ is chosen from the singleton $%
\{(a,b)\}$.

\subsubsection{ $p_n$ chosen uniformly over $[0,1]$}

Suppose $p_n$ has the uniform distribution over $[0,1]$. Then, we have 
\begin{eqnarray*}
\mathbb{E}(Y(\theta )(\omega )) &=&\int_{p_n(\omega)\leq a^{\theta
}/(a^{\theta }+b^{\theta })} \hskip -1.6 cm \log p_n(\omega)\text{ }d\mathbb{%
P(\omega )+}\int_{p_n(\omega)>a^{\theta }/(a^{\theta }+b^{\theta })} \hskip %
-1.6 cm \log (1-p_n(\omega))\text{ }d\mathbb{P(\omega )} \\
&=&\frac{a^{\theta }}{a^{\theta }+b^{\theta }}\log \left( \frac{a^{\theta }}{%
a^{\theta }+b^{\theta }}\right) +\frac{b^{\theta }}{a^{\theta }+b^{\theta }}%
\log \left( \frac{b^{\theta }}{a^{\theta }+b^{\theta }}\right) -1
\end{eqnarray*}%
and%
\begin{eqnarray*}
\mathbb{E}(Z(\theta )(\omega )) &=&(\log a)\mathcal{P}\left( p_n \leq
a^{\theta }/(a^{\theta }+b^{\theta })\right) +(\log b)\mathcal{P}\left( p_n
>a^{\theta }/(a^{\theta }+b^{\theta })\right) \\
&=&\frac{a^{\theta }}{a^{\theta }+b^{\theta }}\log a+\frac{b^{\theta }}{%
a^{\theta }+b^{\theta }}\log b.
\end{eqnarray*}%
Consequently,%
\begin{eqnarray}
G(\theta ) &=&\frac{a^{\theta }\log \left( \frac{a^{\theta }}{a^{\theta
}+b^{\theta }}\right) -a^{\theta }+b^{\theta }\log \left( \frac{b^{\theta }}{%
a^{\theta }+b^{\theta }}\right) -b^{\theta }}{a^{\theta }\log a+b^{\theta
}\log b}.  \label{GFormula}
\end{eqnarray}%
One can clearly see that $G$ is a continuous function (even differentiable)
and so Corollary \ref{Cor:Cont} applies.

Choose $\gamma $ so that $a=b^{\gamma }$. Then $G(\theta )=\theta $ if and
only if 
\begin{equation*}
-(b^{\theta \gamma }+b^{\theta })\log (1+b^{\theta (1-\gamma )})-(b^{\theta
\gamma }+b^{\theta })+\theta b^{\theta }(1-\gamma )\log b=\theta (\log
b)(\gamma b^{\theta \gamma }+b^{\theta })
\end{equation*}%
if and only if%
\begin{equation*}
-(b^{\theta \gamma }+b^{\theta })\log (1+b^{\theta (1-\gamma )})-(b^{\theta
\gamma }+b^{\theta })=\theta \gamma (\log b)(b^{\theta \gamma }+b^{\theta }).
\end{equation*}%
Dividing through by $b^{\theta \gamma }+b^{\theta },$ this is equivalent to
the statement%
\begin{equation*}
\log (1+b^{\theta (1-\gamma )})+1=-\gamma \theta \log b.
\end{equation*}%
Taking the exponential of both sides, it follows that $G(\theta )=\theta $
if and only if 
\begin{equation*}
b^{\theta }+b^{\theta \gamma }-e^{-1}=0\text{.}
\end{equation*}

\begin{example}
Suppose $\mathcal{C}_{ab}$ is the deterministic Moran set with $a=b^{2}$, $%
p_{n}$ is uniformly distributed over $[0,1]$ and $\mu $ is the corresponding
random measure. The analysis above shows that $G(\theta )=\theta $ if and
only if 
\begin{equation*}
b^{2\theta }+b^{\theta }-e^{-1}=0,
\end{equation*}%
equivalently, $b^{\theta }=\left( -1\pm \sqrt{1+4e^{-1}}\right) /2$. Hence
according to Cor. \ref{Cor:Cont}, for all large $\Phi ,$ 
\begin{equation*}
\overline{\dim }_{\Phi }\mu =\frac{\log \left( \frac{\sqrt{1+4e^{-1}}-1}{2}%
\right) }{\log b}\text{ a.s.}
\end{equation*}%
For example, if $b=1/2$ and $a=1/4,$ then $\overline{\dim }_{\Phi }\mu
\approx (1.25)/\log 2$.

It is interesting that the ratio of $\overline{\dim }_{\Phi }\mu $ to $\dim
_{H}\mathcal{C}_{ab}$ is constant (and approximately $2.60$) for these
measures. We see this since 
\begin{equation*}
\dim _{H}\mathcal{C}_{ab}=\frac{\log (\sqrt{5}/2-1/2)}{\log b}
\end{equation*}%
is the non-negative solution to $b^{2d}+b^{d}=1$. (We note that because of
self-similarity and the separation condition, all the \textquotedblleft
usual\textquotedblright\ dimensions of $\mathcal{C}_{ab}$ agree with the
similarity dimension.)
\end{example}

\begin{example}
We continue with $\mathcal{C}_{ab}$ as the deterministic Moran set with $%
p_{n}$ being drawn uniformly from $[0,1]$ and with $\mu $ as the
corresponding random measure. In Figure \ref{fig:Cab_unifP} (obtained by
numerically solving $G(\theta )=\theta $) we show the almost sure upper $%
\Phi $-dimension (for large $\Phi $) of $\mu $ on $\mathcal{C}_{ab}$ as a
function of $(a,b)$ where, for this figure, we draw $(a,b)$ from the set 
\begin{equation*}
\Lambda =\{(a,b):1/50\leq \min \{a,b\}\leq a+b\leq 49/50\}.
\end{equation*}%
\begin{figure}[tph]
\begin{center}
\includegraphics[width = 3 in]{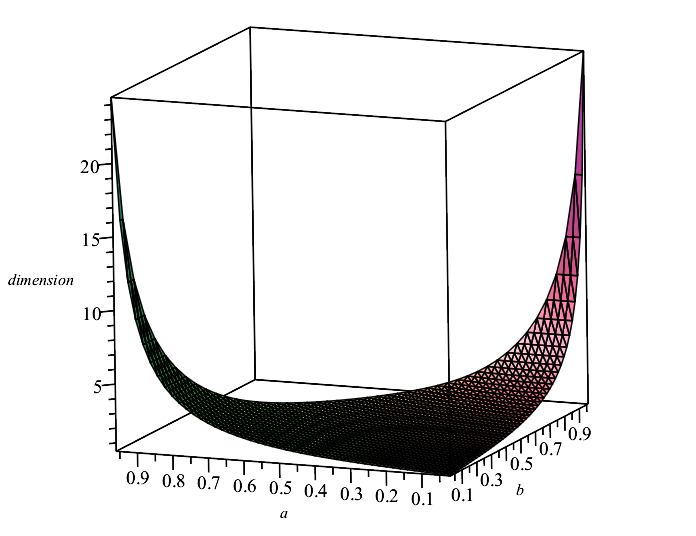}
\end{center}
\caption{$\overline{\dim }_{\Phi }\protect\mu $ as a function of $(a,b)$ for 
$\protect\mu $ on $C_{ab}$ with $p\sim U[0,1]$.}
\label{fig:Cab_unifP}
\end{figure}
It is notable that the dimension is a continuous function of $(a,b)\in
\Lambda $ and it appears to increase as either $a\rightarrow 1$ or $%
b\rightarrow 1$ and vanish as $a$ and $b$ both tend to $0$. In fact that is
indeed the case as we now argue.

For our discussion, let $D_{a b}$ be the almost sure upper $\Phi $-dimension
of the random measure $\mu $ for the large $\Phi$ case. What we wish to show
is that $D_{a b}\rightarrow 0$ as $a$ and $b$ tend to $0$ and $D_{a
b}\rightarrow \infty $ as $a$ or $b$ tend to $1$.

\begin{proof}
From (\ref{GFormula}) we have%
\begin{equation*}
G(\theta )=\frac{a^{\theta }\log \left( \frac{a^{\theta }}{a^{\theta
}+b^{\theta }}\right) -a^{\theta }+b^{\theta }\log \left( \frac{b^{\theta }}{%
a^{\theta }+b^{\theta }}\right) -b^{\theta }}{a^{\theta }\log a+b^{\theta
}\log b}.
\end{equation*}%
and thus $G(\theta )\geq \theta $ if and only if 
\begin{equation}
a^{\theta }\log \left( \frac{a^{\theta }}{a^{\theta }+b^{\theta }}\right)
-a^{\theta }+b^{\theta }\log \left( \frac{b^{\theta }}{a^{\theta }+b^{\theta
}}\right) -b^{\theta }\leq \theta (a^{\theta }\log a+b^{\theta }\log b).
\label{lower}
\end{equation}%
After some simplification, we see that this happens precisely when 
\begin{equation*}
(a^{\theta }+b^{\theta })(1+\log (a^{\theta }+b^{\theta }))\geq 0
\end{equation*}%
which, since $a^{\theta }+b^{\theta }>0$, is equivalent to 
\begin{equation*}
a^{\theta }+b^{\theta }\geq e^{-1}.
\end{equation*}

Suppose $\theta <\infty $. Then this inequality will clearly hold once
either $a$ or $b$ is sufficiently close to $1$. Consequently, Theorem \ref%
{MainLarge}(ii) implies $D_{ab}\geq \theta $ if either $a$ or $b$ is
sufficiently large. Since $\theta $ was arbitrary, it follows that $D_{ab}$
tends to infinity as either $a$ or $b$ tend to $1$.

On the other hand, if $\theta >0$ and $a,b$ are both sufficiently small,
then $a^{\theta }+b^{\theta }<e^{-1}$ and hence $G(\theta )<\theta $.
Consequently, Theorem \ref{MainLarge}(i) implies that $D_{ab}<\theta $ and
hence $D_{ab}\rightarrow 0$ as both $a,b\rightarrow 0$.
\end{proof}
\end{example}

\subsubsection{ $p_n$ chosen from the two-element set $\{ p, 1 -p \}$ for
fixed $0 < p < 1/2$}

For our next example, we consider the deterministic Moran set $\mathcal{C}%
_{ab}$, but with $p_{n}$ chosen from a two-element set.

\begin{example}
\label{Cab} Consider the deterministic Moran set $\mathcal{C}_{ab}$ with $%
a<b $, but in this case let $\mu $ be the random measure with probability $%
p_{n}$ chosen uniformly from the two values $p$ or $1-p$ where $0<p<1/2$ is
fixed. Define $\beta $ and $\eta $ by $a^{\beta }=b$ and $(1-p)^{\eta }=p,$
so $\beta <1$ and $\eta >1$. We claim 
\begin{equation}
\overline{\dim }_{\Phi }\mu _{\omega }=\left\{ 
\begin{array}{cc}
\frac{\log p+\log (1-p)}{2\log b} & \text{if }\eta +1+\beta -3\eta \beta
\geq 0 \\ 
\frac{\log p}{\frac{1}{2}(\log a+\log b)} & \text{if }\eta +1+\beta -3\eta
\beta <0%
\end{array}%
\right. \text{ a.s}.  \label{Claim}
\end{equation}

\begin{proof}
Let $c(\theta )=a^{\theta }/(a^{\theta }+b^{\theta })$. Note that $c(\theta
) $ is a decreasing function and for $\theta \geq 0,$ $c(\theta )\leq 1/2$.
In particular, there is no non-negative solution to $c(\theta )=1-p>1/2$.
Let $\theta _{0}$ satisfy $c(\theta _{0})=p$, so 
\begin{equation*}
\theta _{0}=\frac{\log ((1-p)/p)}{\log (b/a)}=\frac{(\eta -1)\log (1-p)}{%
(1-\beta )\log a}.
\end{equation*}

If $\theta \geq \theta _{0},$ then both $p,1-p\geq c(\theta ),$ so $%
Y_{n}=\log (1-p_{n})$ and $Z_{n}=\log b$.

If $0\leq \theta <\theta _{0},$ then $p\leq c(\theta )<1-p,$ hence if $%
p_{n}=p,$ then $Y_{n}=\log p_{n}=\log p$ and $Z_{n}=\log a$, while if $%
p_{n}=1-p,$ then $Y_{n}=\log (1-p_{n})=\log p$ and $Z_{n}=\log b$.

It is easy to see from these observations that 
\begin{equation*}
\mathbb{E}(Y_{1})=\left\{ 
\begin{array}{cc}
\frac{1}{2}(\log p+\log (1-p)) & \text{if }\theta \geq \theta _{0} \\ 
\log p & \text{if }0\leq \theta <\theta _{0}%
\end{array}%
\right.
\end{equation*}%
and 
\begin{equation*}
\mathbb{E}(Z_{1})=\left\{ 
\begin{array}{cc}
\log b & \text{if }\theta \geq \theta _{0} \\ 
\frac{1}{2}(\log a+\log b) & \text{if }0\leq \theta <\theta _{0}%
\end{array}%
\right. .
\end{equation*}%
Hence 
\begin{equation*}
G(\theta )=\left\{ 
\begin{array}{cc}
\frac{\log p+\log (1-p)}{2\log b} & \text{if }\theta \geq \theta _{0} \\ 
\frac{\log p}{\frac{1}{2}(\log a+\log b)} & \text{if }0\leq \theta <\theta
_{0}%
\end{array}%
\right. .
\end{equation*}%
Replacing $p$ by $(1-p)^{\eta }$ and $b$ by $a^{\beta },$ this is the same
as stating%
\begin{equation*}
G(\theta )=\left\{ 
\begin{array}{cc}
\frac{(\eta +1)\log (1-p)}{2\beta \log a} & \text{if }\theta \geq \frac{%
(\eta -1)\log (1-p)}{(1-\beta )\log a} \\ 
\frac{\eta \log (1-p)}{\frac{1}{2}(\beta +1)\log a} & \text{if }0\leq \theta
<\frac{(\eta -1)\log (1-p)}{(1-\beta )\log a}%
\end{array}%
\right. .
\end{equation*}

It is easy to check that if $\eta +1+\beta -3\eta \beta <0,$ then 
\begin{equation*}
\frac{\eta \log (1-p)}{\frac{1}{2}(1+\beta )\log a}<\frac{(\eta -1)\log (1-p)%
}{(1-\beta )\log a},
\end{equation*}%
so $G(\alpha )=\alpha $ for%
\begin{equation*}
\alpha =\frac{\eta \log (1-p)}{\frac{1}{2}(\beta +1)\log a}=\frac{\log p}{%
\frac{1}{2}(\log a+\log b)}.
\end{equation*}%
If $\alpha <\psi <\theta _{0},$ then obviously $G(\psi )=\alpha <\psi $. If $%
\psi \geq \theta _{0}>\alpha ,$ then one can also check that $\eta +1+\beta
-3\eta \beta <0$ implies 
\begin{equation*}
\frac{(\eta +1)\log (1-p)}{2\beta \log a}<\frac{\eta \log (1-p)}{\frac{1}{2}%
(\beta +1)\log a},
\end{equation*}%
so again we have $G(\psi )<\alpha <\psi $.

Similarly, if $\eta +1+\beta -3\eta \beta \geq 0,$ then 
\begin{equation*}
\frac{(\eta +1)\log (1-p)}{2\beta \log a}\geq \frac{(\eta -1)\log (1-p)}{%
(1-\beta )\log a},
\end{equation*}%
hence $G(\alpha )=\alpha $ for 
\begin{equation*}
\alpha =\frac{(\eta +1)\log (1-p)}{2\beta \log a}=\frac{\log p+\log (1-p)}{%
2\log b}
\end{equation*}%
and if $\psi >\alpha ,$ then $G(\psi )=\alpha <\psi $.

It follows from the theorem that $\overline{\dim }_{\Phi }\mu $ is as
claimed in (\ref{Claim}).
\end{proof}
\end{example}

\subsubsection{ $p_n$ deterministic and $(a,b)$ random}

For our last two examples we now take $p_n = 1/2$ and choose the scalings $%
a_n$ and $b_n$ randomly.

\begin{example}
\label{exa,b} Consider the random Moran set, $\mathcal{C}_{\omega },$ where $%
a_{n},b_{n}$ are chosen independently from $\{A,B\}$ with equal likelihood
and $0<A<B<1/2$. Let $\mu _{\omega }$ be the random measure supported on $%
\mathcal{C}_{\omega }$ where $p_{n}=1/2$ for all $n$. Obviously, $%
Y_{n}(\theta )=\log 1/2$ for all $n$ and all $\theta $. Note that the
condition $p_{n}\leq a_{n}^{\theta }/(a_{n}^{\theta }+b_{n}^{\theta })$
simply reduces to the inequality $a_{n}\geq b_{n}$ and this is true whenever 
$a_{n}=A$ or $a_{n}=b_{n}=A$. Thus for all $n$ and $\theta ,$ 
\begin{eqnarray*}
\mathbb{E}(Z_{n}(\theta )) &=&\int_{\{a_{n}=B,a_{n}=b_{n}=A\}}\log
a_{n}+\int_{\{a_{n}=A,b_{n}=B\}}\log b_{n} \\
&=&\frac{1}{2}\log B+\frac{1}{4}\log A+\frac{1}{4}\log B=\frac{3}{4}\log B+%
\frac{1}{4}\log A.
\end{eqnarray*}%
It follows from the theorem that for all large $\Phi ,$ 
\begin{equation*}
\overline{\dim }_{\Phi }\mu _{\omega }=\frac{4\log 1/2}{3\log B+\log A}\text{
a.s.}
\end{equation*}
\end{example}

\begin{example}
\label{exUnif} Fix $0 < 2 A < B < 1$ and consider the random Moran set with $%
(a_n,b_n)$ chosen uniformly over the set 
\begin{equation*}
\Lambda = \{ (x,y) : A \le \min\{ x, y \} \le x+y \le B \}.
\end{equation*}
Let $\mu _{\omega}$ be the random measure where $p_{n}=1/2$ for all $n$.
Similarly to the previous example, for all $n$ and $\theta $, $Y_{n}(\theta
)=\log 1/2$ and 
\begin{equation*}
Z_{n}(\theta )=\left\{ 
\begin{array}{cc}
\log a_{n} & \text{if }a_{n}\geq b_{n} \\ 
\log b_{n} & \text{if }a_{n}<b_{n}%
\end{array}%
\right. .
\end{equation*}%
Thus 
\begin{eqnarray*}
\mathbb{E}(Z_{n}) &=&\int_{a_{n}\geq b_{n}}\log a_{n}+\int_{a_{n}<b_{n}}\log
b_{n} = 2 \int_{a_n \ge b_n} \log a_n \\
&=&\frac{4}{(B-2A)^2} \left( \int_{A}^{B/2}\left( \int_{A}^{x}\log x\,
dy\right) dx+\int_{B/2}^{B-A} \left( \int_{A}^{B-x}\log x\, dy\right) dx
\right) \\
&=&\frac{4}{(B-2A)^2} \left( \int_{A}^{B/2}(x-A)\log x\, dx+\int_{B/2}^{B-A}
(B-A-x) \log x\, dx \right) \\
&=& \frac{ 2 (B-A)^2 \log(B-A) + B^2 \log(2) + 2 A^2 \log(A) - B^2 \log(B) -
6 (B/2 - A)^2}{(B - 2 A)^2}.
\end{eqnarray*}%
Hence for all large $\Phi$, almost surely we have 
\begin{equation*}
\overline{\dim }_{\Phi }\mu _{\omega }= \frac{ (B-2A)^2 \log(1/2)}{ 2
(B-A)^2 \log(B-A) + B^2 \log(2) + 2 A^2 \log(A) - B^2 \log(B) - 6 (B/2 - A)^2%
}.
\end{equation*}
\end{example}

\medskip

\subsubsection{ Further remarks on $G(\protect\theta )$}

\label{subsubsec:G_theta}

For a fixed $A,B$ with $0<A<B<1$, set 
\begin{equation*}
\Lambda =\{(a,b,z):A\leq \min \{a,b\}\leq a+b\leq B,0\leq z\leq 1\}
\end{equation*}%
as our parameter space. Then each point $(a,b,p)\in \Lambda $ defines an
iterated function system with probabilities (IFSP). If this configuration
(scalings $a,b$ and probabilities $p,1-p$) is chosen at every level, the
resulting (deterministic) Moran set and measure are both self-similar. The
associated function $G(\theta )$ is piecewise constant with at most one
discontinuity. It is possible to show that the location of the discontinuity
cannot be between the two values of $G(\theta )$. Using this it is not
difficult to see that there is a unique solution to $G(\theta )=\theta $.

In terms of our random model we can identify this single IFSP with a
probability measure on $\Lambda $ which is a point-mass at the point $%
(a,b,p) $. If, instead, we take a probability measure on $\Lambda $ which is
a combination of $N$ point masses, this is identified with a finite
collection of different IFSPs from which to randomly choose at each level,
with the choice independent from level to level. This time the function $%
G(\theta )$ has at most $N$ points of discontinuity and hence at most a
finite number of solutions to $G(\theta )=\theta $. It would be very
interesting to know if it were possible to construct an explicit example
where $G(\theta )=\theta $ has no solutions; this would happen if a point of
discontinuity of $G(\theta )$ coincided with a jump in the value from $%
G(\psi )>\psi $ to $G(\psi )<\psi $. For a single IFSP this is not possible,
but it is unclear if this might be possible for a collection of IFSPs.

On the other hand if we begin with a probability measure $\eta $ on $\Lambda 
$ which is absolutely continuous with respect to Lebesgue measure, then $%
G(\theta )$ is a continuous function of $\theta $ and so Corollary \ref%
{Cor:Cont} applies. It is worth pausing for a moment to contemplate why this
is the case. For each fixed value of $\theta >0$, the set $\Lambda $ is
partitioned into the two regions 
\begin{equation*}
\{p\leq \frac{a^{\theta }}{a^{\theta }+b^{\theta }}\}\quad \mbox{ and }\quad
\{p>\frac{a^{\theta }}{a^{\theta }+b^{\theta }}\}
\end{equation*}%
and the boundary between these regions is a smooth function of $(a,b,p)$ and
also of $\theta $. The values of $Y(\theta )$ and $Z(\theta )$ depend
entirely on which of these two sets the particular (random) choice of $%
(a,b,p)$ belongs to, and thus the expected values of $Y$ and $Z$ are given
by the distribution of $\eta $ over these two sets. Since the boundary is a
smooth surface, if $\eta $ is absolutely continuous, changing $\theta $
moves the boundary smoothly and thus changes $G(\theta )$ in a continuous
way.

\subsection{Relating $\dim_{\Phi} \protect\mu$ to $\dim_{\Phi} {\mathcal{C}}_%
\protect\omega$}

\label{sec:Large_dim_underlying}


It is known that $\overline{\dim}_\Phi \mu \ge \overline{\dim}_\Phi \supp %
\mu $ for any measure $\mu$ and if $\mu$ is doubling then we also have $%
\underline{\dim}_\Phi \mu \le \underline{\dim}_\Phi \supp \mu$ (see \cite[%
Prop 2.9]{HH}). This leads us to ask if we can arrange for an almost sure
equality, that is, can we choose the $p_n(\omega)$ in such a way so that $%
\overline{\dim}_\Phi \mu_\omega = \overline{\dim}_\Phi {\mathcal{C}}_\omega$
for $\omega$ in a set of full measure.

There is a standard, and ``natural'', way of doing this for a single IFS
with probabilities. Given scaling factors $a$ and $b$, we set $p = a^d$ and $%
1 - p = b^d$, where $d>0$ is the solution to the Moran equation $a^x + b^x =
1$. This choice of $p$ will ``balance'' the scaling of the lengths with the
redistribution of the mass to ensure that $\dim \mu = \dim {\mathcal{C}}$,
with all the large $\Phi$-dimensions coinciding with the Hausdorff
dimension. In fact, in this particular case it is easy to see that $%
G^{\prime }(\theta) = G(\theta) = d$ for all $\theta$.

However, even in the next simplest case of randomly choosing between two
IFSPs this ``natural'' choice of probabilities does not typically give $%
\overline{\dim}_\Phi \mu_\omega = \overline{\dim}_\Phi {\mathcal{C}}_\omega$
almost surely. As an example, take the IFSP $\{ x/3, x/9 + 8/9 \}$ with
probabilities $p, 1-p$ and a second IFSP $\{ x/4, x/16 + 15/16\}$ with
corresponding probabilities $q, 1-q$. To get our random ${\mathcal{C}}%
_\omega $ and $\mu_\omega$ we will choose equally likely between these two
IFSs at each level.

The Moran equation for the first IFS is $3^{-x}+3^{-2x}=1$, whose solution
is 
\begin{equation*}
d_{1}=\frac{\ln \left( \frac{\sqrt{5}-1}{2}\right) }{-\ln (3)},%
\mbox{ with
corresponding }p=3^{-d_{1}}=\frac{\sqrt{5}-1}{2}.
\end{equation*}%
Similarly, the Moran equation for the second is $4^{-x}+4^{-2x}=1$, with
solution 
\begin{equation*}
d_{2}=\frac{\ln \left( \frac{\sqrt{5}-1}{2}\right) }{-\ln (4)},%
\mbox{ and
corresponding }q=4^{-d_{2}}=\frac{\sqrt{5}-1}{2}.
\end{equation*}%
Using these choices for $p$ and $q$, elementary computations show that 
\begin{equation*}
G(\theta )=%
\begin{cases}
\frac{2\ln \left( \frac{\sqrt{5}-1}{2}\right) }{\ln (\frac{1}{4})+\ln (\frac{%
1}{3})}, & \text{ if }\theta \leq \frac{\ln \left( \frac{\sqrt{5}-1}{2}%
\right) }{\ln (\frac{1}{4})}; \\[10pt] 
\frac{3\ln \left( \frac{\sqrt{5}-1}{2}\right) }{\ln (\frac{1}{4})+\ln (\frac{%
1}{9})}, & \text{ if }\frac{\ln \left( \frac{\sqrt{5}-1}{2}\right) }{\ln (%
\frac{1}{4})}<\theta \leq \frac{\ln \left( \frac{\sqrt{5}-1}{2}\right) }{\ln
(\frac{1}{3})}; \\[10pt] 
\frac{2\ln \left( \frac{\sqrt{5}-1}{2}\right) }{\ln (\frac{1}{4})+\ln (\frac{%
1}{3})}, & \text{ if }\theta >\frac{\ln \left( \frac{\sqrt{5}-1}{2}\right) }{%
\ln (\frac{1}{3})}.%
\end{cases}%
\end{equation*}%
From this we can deduce that 
\begin{equation*}
\overline{\dim }_{\Phi }\mu _{\omega }=\frac{3\ln \left( \frac{\sqrt{5}-1}{2}%
\right) }{\ln (\frac{1}{4})+\ln (\frac{1}{9})}\approx 0.402\text{ almost
surely.}
\end{equation*}

The almost sure Hausdorff dimension of ${\mathcal{C}}_\omega$ is given by
the solution $D > 0$ to $( 3^{-x} + 3^{-2 x})^{1/2}( 4^{-x} + 4^{-2
x})^{1/2} = 1$ (see \cite{Ham}). It is conjectured in \cite{Tr2} that for
all large $\Phi$ 
\begin{equation*}
\overline{\dim}_\Phi {\mathcal{C}}_\omega = \dim_H {\mathcal{C}}_\omega = D
\approx 0.388 \mbox{ almost surely.}
\end{equation*}

On the other hand, if there is a $d>0$ so that $a_n(\omega)^d +
b_n(\omega)^d = 1$ for all $\omega$ and $n$ (i.e., for all possible IFS in
the given model), then choosing $p_n(\omega) = a_n(\omega)^d$ will result in 
$d = \overline{\dim}_\Phi \mu_\omega = \overline{\dim}_\Phi {\mathcal{C}}%
_\omega$ almost surely. This is because in this very special situation we
will have $G(\theta) = d$ for all $\theta$. We conjecture that, other than
in this special case, generically the ``natural'' choice of $p_n$ will
result in $\overline{\dim}_\Phi \mu_\omega > \overline{\dim}_\Phi {\mathcal{C%
}}_\omega$ almost surely. Verifying this by explicit computations seems to
be exceedingly complicated.

\subsection{Comments on a more general construction}

\label{sec:multi}

In this short subsection we briefly indicate how we can modify our
construction so that it works in ${\mathbb{R}}^D$ and with the possibility
of more than two children per parent. We can also allow the number of
children at each level to be random and change from level to level. None of
these significantly change anything as long as the number of children is
uniformly bounded. To describe the generalization, we first need to
establish some notation and definitions.

For $I\subset {\mathbb{R}}^{d}$, we denote by $diam(I)$ the diameter of $I$.
Given $r>0$, we say $J\subseteq I$ is an $r$-\textbf{similarity} of $I$ if
there is a similarity $S$ such that $J=S(I)$ and $diam(J)=r\cdot diam(I)$. A
collection of $r_{j}$-similarities, $J_{1},J_{2},\ldots ,J_{k}$, (possibly
of distinct contraction factors) is $\tau $-\textbf{separated} if $%
d(J_{i},J_{j})\geq \tau \cdot diam(I)$ for all $i\neq j$. If such a
collection exists, we say that $I$ has the $(k,\tau )$-\textbf{separation
property}.

In the event that the interior of $I$ is non-empty, then for a given $k$ and
small enough $\tau > 0$, it is easy to see that $I$ will have the $(k,\tau)$%
-separation property for any $r_j \le \rho_k$, $j=1,2,\ldots, k$, for a
suitably small $\rho_k > 0$. For example, if $I = [0,1]$ and $\tau k < 1$,
then $\rho_k = (1 - (k-1)\tau)/k$ will work. We can view the $(k,\tau)$%
-separation condition as a uniform strong separation condition. 

Lemma \ref{L1}, which relates balls with level $n$ sets and thus contains
the essential geometric result, is changed very little in the more general
setup. We redefine $L$ by the condition that 
\begin{equation*}
2 B^{L-1} \le \tau
\end{equation*}
and replace $1-B$ with $\tau$ in the proof and everything else is the same.

Let $I_{0}$ be a fixed compact subset of ${\mathbb{R}}^{D}$ with non-empty
interior and diameter one. Fix $\tau \in (0,1)$, $K\geq 2$ and let $B_{i}\in
(0,1)$, $i=2,\ldots ,K$, be such that $I_{0}$ has the $(i,\tau )$-separation
property for all $r_{j}\leq B_{i}$. We again let $A\in (0,\min_{i}B_{i})$.
For each $\omega $ and step $n$ in the construction, we take the random
variables $K_{n}=k_{n}(\omega )\in \{2,3,\ldots ,K\}$ and $%
a_{n}^{(1)}(\omega ),...,a_{n}^{(K_{n})}(\omega )$ where $%
a_{n}^{(j)}=a_{n}^{(j)}(\omega )\geq A$ for each $j=1,2,\ldots ,K_{n}$ and
also $a_{n}^{(1)}+a_{n}^{(2)}+\cdots +a_{n}^{(K_{n})}\leq B_{K_{n}}$; these
determine the relative sizes of the children at step $n$. Specifically, the
children $J_{j}(\omega )$ of the parent $I_{n}(\omega )=I_{n}$ are $%
a_{n}^{(j)}$-similarities of $I_{n}$, for $j=1,2,\ldots ,K_{n}$, which are $%
\tau $-separated. The random Moran set $\mathcal{C}_{\omega }$ is then
defined (as usual) to be 
\begin{equation*}
\mathcal{C}_{\omega }=\bigcap_{n=1}^{\infty }{\mathcal{M}}_{n}(\omega ),
\end{equation*}%
where ${\mathcal{M}}_{n}(\omega )$ is the union of the step $n$ children.

Define a random measure $\mu _{\omega }$ supported on this Moran set $%
\mathcal{C}_{\omega }$ by the rule that if the children of $I_{n}$ are
labelled $I_{n}^{(j)}$, $j=1,...,K_{n}$, then $\mu _{\omega
}(I_{n}^{(j)})=p_{n}^{(j)}$ $\mu _{\omega }(I_{n}),$ where the random
variables $p_{n}^{(j)}(\omega )\geq 0$ satisfy $%
\sum_{j=1}^{K_{n}}p_{n}^{(j)}=1$ for all $n$. We assume that $\mathbb{E}%
(\left( a_{n}^{(j)}\right) ^{-t})<\infty $ and $\mathbb{E}(\left(
p_{n}^{(j)}\right) ^{-t})<\infty $ for some $t>0$ and all $j=1,...,K_{n}$
and $n$.

Define 
\begin{equation*}
\begin{array}{c}
Y_{n}(\theta )(\omega )=\log p_{n}^{(m)}(\omega ) \\ 
Z_{n}(\theta )(\omega )=\log a_{n}^{(m)}(\omega )%
\end{array}%
\text{ where }\frac{a_{n}^{(m)\theta }}{p_{n}^{(m)}}=\max \left( \frac{%
a_{n}^{(k)\theta }}{p_{n}^{(k)}}:k=1,...,K_{n}\right)
\end{equation*}%
and, as before, define 
\begin{equation*}
G(\theta )=\frac{\mathbb{E}_{\omega }(Y_{1}(\theta )(\omega ))}{\mathbb{E}%
_{\omega }(Z_{1}(\theta )(\omega ))}.
\end{equation*}

Essentially the same arguments as before show that Theorem \ref{MainLarge}
holds in this case as well.

\begin{example}
Suppose $K_n=3$ (the same for all $n$) and the ratios are $a_{n}^{(1)}=1/4,$ 
$a_{n}^{(2)}=a_{n}^{(3)}=1/16$ for all $\omega $. Assume the probabilities $%
p_{n}^{(j)}$ are $1/2,1/4,1/4$ with $1/2$ assigned to position $j$ with
equal likelihood. Note that if $p_{n}^{(1)}=1/2,$ then $\max \left(
a_{n}^{(k)\theta }/p_{n}^{(k)}\right) =a_{n}^{(1)\theta }/p_{n}^{(1)}$ if $%
\theta \geq 1/2$ and $a_{n}^{(2)\theta }/p_{n}^{(2)}$ otherwise. If $%
p_{n}^{(j)}=1/2$ for $j=2,3,$ then $\max \left( a_{n}^{(k)\theta
}/p_{n}^{(k)}\right) =a_{n}^{(1)\theta }/p_{n}^{(1)}$ for all $\theta \geq 0$%
. One can check that 
\begin{equation*}
\mathbb{E}(Y_{1}(\theta ))=\left\{ 
\begin{array}{cc}
\log (1/4) & \text{if }\theta <1/2 \\ 
\frac{5}{3}\log (1/2) & \text{ if }\theta \geq 1/2%
\end{array}%
\right.
\end{equation*}%
and 
\begin{equation*}
\mathbb{E}(Z_{1}(\theta ))=\left\{ 
\begin{array}{cc}
\frac{4}{3}\log (1/4) & \text{if }\theta <1/2 \\ 
\log (1/4) & \text{ if }\theta \geq 1/2%
\end{array}%
\right. .
\end{equation*}%
Thus 
\begin{equation*}
G\mathbb{(}\theta )=\left\{ 
\begin{array}{cc}
3/4 & \text{if }\theta <1/2 \\ 
5/6 & \text{ if }\theta \geq 1/2%
\end{array}%
\right.
\end{equation*}%
and consequently, for all large dimension functions $\Phi $, $\overline{\dim}
_{\Phi}\mu =5/6$ a.s.
\end{example}

\section{\protect\bigskip Dimension results for small $\Phi $}

\label{sec:SmallPhi}

We now move to a discussion of the \textquotedblleft
small\textquotedblright\ dimension functions $\Phi $. Recall that this means
that $\Phi \ll \log \left\vert \log t\right\vert /\left\vert \log
t\right\vert $. We again restrict our discussion to the case of two children
per parent interval for the sake of clarity. The modifications necessary for
the more general case are straightforward.

\subsection{The Dimension Theorem for Small $\Phi $}

\label{sec:dimthm_small}

Put 
\begin{eqnarray*}
\alpha &=&\max \left( \esssup\left( \frac{\log p_{1}(\omega )}{\log
a_{1}(\omega )}\right) ,\esssup\left( \frac{\log (1-p_{1}(\omega ))}{\log
b_{1}(\omega )}\right) \right) , \\
\beta &=&\min \left( \essinf\left( \frac{\log p_{1}(\omega )}{\log
a_{1}(\omega )}\right) ,\essinf\left( \frac{\log (1-p_{1}(\omega ))}{\log
b_{1}(\omega )}\right) \right) .
\end{eqnarray*}

\begin{theorem}
\label{small}There is a set $\Gamma $ of full measure, such that $\overline{%
\dim }_{\Phi }\mu _{\omega }=\alpha $ and $\underline{\dim }_{\Phi }\mu
_{\omega }=\beta $ for all $\omega \in \Gamma $ and for all small dimension
functions $\Phi $.
\end{theorem}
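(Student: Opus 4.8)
The plan is to reduce everything, via Lemma \ref{lem:intervals_give_dimension}, to estimating the quantity
\[
R(I_N,I_n):=\frac{\log\bigl(\mu_\omega(I_N)/\mu_\omega(I_n)\bigr)}{\log\bigl(|I_N|/|I_n|\bigr)}
\]
over nested Moran intervals $I_n\subseteq I_N$ subject to $|I_n|<|I_N|^{1+\Phi(|I_N|)}$. Writing the two quotients as products over the levels $N+1,\dots,n$ exactly as in the proof of Theorem \ref{MainLarge}, each $R(I_N,I_n)$ is a ratio of sums $\sum(-\log(\mathrm{prob}_i))/\sum(-\log(\mathrm{scale}_i))$ of positive terms, where at level $i$ the term is $(-\log p_i,-\log a_i)$ or $(-\log(1-p_i),-\log b_i)$ according to the chosen child. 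The mediant inequality then pins $R$ between the smallest and largest per-level ratios. Since a.s. $p_i\ge P_0$, $a_i\le A_0$ (and similarly for the other quantities) for all $i$ simultaneously, each per-level ratio lies in $[\beta,\alpha]$. Hence on a single full-measure set, and for \emph{every} dimension function $\Phi$, we obtain $\overline{\dim}_\Phi\mu_\omega\le\alpha$ and $\underline{\dim}_\Phi\mu_\omega\ge\beta$; this direction uses nothing about the size of $\Phi$.

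For the reverse inequalities I focus on $\overline{\dim}_\Phi\mu_\omega\ge\alpha$ (the bound $\underline{\dim}_\Phi\mu_\omega\le\beta$ is the mirror image, exchanging $\max\leftrightarrow\min$ and essential sup $\leftrightarrow$ essential inf). Assume first $\alpha=\log P_0/\log A_0<\infty$. Fix $\varepsilon>0$; since $P_0,A_0$ are an essential inf and sup, the single-level event $E_i=\{p_i\in[P_0,P_0+\varepsilon'],\ a_i\in[A_0-\varepsilon',A_0]\}$ has probability $\delta=\delta(\varepsilon)>0$ for $\varepsilon'=\varepsilon'(\varepsilon)$ small, and (using $p_i\perp(a_i,b_i)$ and the i.i.d.\ assumption) these events are independent across levels. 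On a block of consecutive levels all in $\bigcap E_i$, descending through left children yields a pair $I_N\supseteq I_n$ with $R(I_N,I_n)\ge\alpha-\varepsilon$; and because $a_i\le A_0$ on the block, $|I_N|/|I_n|\ge A_0^{-m}$ where $m$ is the block length, so the length-ratio grows with $m$.

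The crux is to choose the blocks so that (i) they are long enough that the admissibility constraint $|I_n|<|I_N|^{1+\Phi(|I_N|)}$ holds, (ii) $m\to\infty$ so the growing length-ratio overwhelms the fixed constant $c_2$ of Lemma \ref{lem:intervals_give_dimension}, and (iii) good blocks occur infinitely often. For (i), since $|I_N|\le B^N$, the constraint is implied by $m|\log A_0|\ge\Phi(|I_N|)|\log|I_N||=H(|I_N|)\log|\log|I_N||$, i.e.\ by $m\ge m^{\min}_N\asymp H(B^N)\log N/|\log A_0|$; this is exactly where smallness enters, since $H\to0$ forces $m^{\min}_N=o(\log N)$. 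I take disjoint blocks with starting levels $N_1<N_2<\cdots$ and lengths $m_j=\max(m^{\min}_{N_j},\lceil\log\log j\rceil)$, $N_{j+1}=N_j+m_j$, so that $m_j\to\infty$ (giving (ii)) while $m_j=o(\log j)$ (as $\log N_j\asymp\log j$ and $H\to0$). Independence across disjoint blocks gives $\mathcal P(\text{block }j\text{ good})=\delta^{m_j}=e^{-|\log\delta|\,m_j}\ge j^{-o(1)}$, whence $\sum_j\delta^{m_j}=\infty$ and the second Borel--Cantelli lemma produces infinitely many good blocks a.s., establishing (iii). Crucially, this probabilistic set is independent of $\Phi$; only the deterministic verification of (i) depends on $\Phi$, and it holds for all large $j$ for any fixed small $\Phi$. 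Thus on one full-measure set and for every small $\Phi$ there are infinitely many admissible pairs with $R\ge\alpha-\varepsilon$ and $|I_N|/|I_n|\to\infty$, forcing $\overline{\dim}_\Phi\mu_\omega\ge\alpha-\varepsilon$; intersecting over $\varepsilon=1/k$ gives $\ge\alpha$.

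I expect the main obstacle to be precisely this three-way balancing of the block lengths---meeting the admissibility lower bound $m^{\min}_N$, forcing $m_j\to\infty$, and keeping $\sum_j\delta^{m_j}$ divergent---together with arranging that a single ($\Phi$-independent) full-measure set serves all small $\Phi$ at once. The degenerate case $\alpha=\infty$ (say $P_0=0$) is a variant: for any $M$ choose $\eta>0$ with $-\log\eta>M|\log A|$, run the same scheme with $E_i=\{p_i\le\eta\}$ (of positive probability since $\mathrm{ess\,inf}\,p_1=0$), and use $-\log a_i\le|\log A|$ to get $R\ge M$ infinitely often, whence $\overline{\dim}_\Phi\mu_\omega=\infty$. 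The lower-$\Phi$ statement $\underline{\dim}_\Phi\mu_\omega=\beta$ follows by the symmetric construction, replacing the maximal per-level ratio by the minimal one.
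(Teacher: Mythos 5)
Your overall strategy is the same as the paper's (reduce to Moran intervals via Lemma \ref{lem:intervals_give_dimension}, force blocks of consecutive nearly-extremal levels, apply the second Borel--Cantelli lemma to disjoint blocks), and your easy direction via the mediant inequality is a correct, slightly slicker packaging of the paper's computation; the degenerate case $P_0=0$ is also handled correctly in outline. The genuine gap is in the hard direction, at exactly the point you flagged as the crux. Your block lengths are $m_j=\max\bigl(m^{\min}_{N_j},\lceil\log\log j\rceil\bigr)$, and $m^{\min}_{N_j}$ depends on $H$, i.e.\ on the particular small dimension function $\Phi$. Consequently the events ``block $j$ is good,'' and hence the full-measure set produced by Borel--Cantelli, depend on $\Phi$, so your sentence ``crucially, this probabilistic set is independent of $\Phi$'' is false for the construction you actually specified. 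What you have really proved is: for each \emph{fixed} small $\Phi$ there is a full-measure set $\Gamma_{\Phi}$ with $\overline{\dim}_{\Phi}\mu_\omega\ge\alpha$ on $\Gamma_\Phi$. Since the small dimension functions form an uncountable family, you cannot intersect these sets, and the per-$\Phi$ statement is strictly weaker than the theorem, whose entire content is a single $\Gamma$ serving all small $\Phi$ simultaneously. Note also that dropping the $\Phi$-dependent term and keeping only $m_j=\lceil\log\log j\rceil$ does not repair this: for $H(t)=(\log|\log t|)^{-1/2}$ one has $m^{\min}_N\asymp \log N/\sqrt{\log\log N}\gg\log\log N$, so admissibility would fail for that $\Phi$.

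The repair is to make the block lengths $\Phi$-independent and as long as Borel--Cantelli permits, namely $m_j\asymp\log j/|\log\delta|$: then $\delta^{m_j}\gtrsim 1/j$ still gives a divergent series, while for \emph{every} small $\Phi$ one has $m^{\min}_{N_j}(\Phi)=o(\log N_j)=o(\log j)$, so admissibility holds for all sufficiently large $j$ (the threshold may depend on $\Phi$, which is harmless since only infinitely many good admissible blocks are needed). This is essentially what the paper does, phrased differently: it runs the argument for the countable family of critical test functions $\Phi_i(t)=J_i\log|\log t|/|\log t|$, with $J_i$ calibrated to $\delta_i$ so that the block length $\chi_{N,i}\asymp\log N$ makes $\mathcal{P}(\Gamma_{N,i})\ge 1/N$ along $N_k=k\log k$, and then transfers to an arbitrary small $\Phi$ by monotonicity: $\Phi\le\Phi_i$ near $0$ gives $\overline{\dim}_{\Phi}\mu_\omega\ge\overline{\dim}_{\Phi_i}\mu_\omega\ge\alpha-1/i$ on the $\Phi$-independent set $\Gamma=\bigcap_i\Gamma_i$. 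With that one change (blocks of length $\asymp\log j$ tied only to $\delta$, or equivalently the paper's test-function-plus-monotonicity device), your argument goes through.
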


\begin{proof}
We will begin by verifying that $\overline{\dim }_{\Phi }\mu \leq \alpha $
a.s. (and this will hold for all choices of $\Phi ,$ not just small $\Phi $%
). Of course, this is obvious if $\alpha =\infty $. Otherwise, consider the
Moran interval $I_{v}(\omega )=I_{v_{1}...v_{N}}$ and descendent interval $%
I_{u}(\omega )=I_{v_{1}...v_{n}}$ where $\left\vert I_{u}\right\vert \leq
\left\vert I_{v}\right\vert ^{1+\Phi (\left\vert I_{v}\right\vert )}$. Then%
\begin{equation*}
\frac{\mu _{\omega }(I_{v})}{\mu _{\omega }(I_{u})}=\prod_{\substack{ j=N+1 
\\ v_{j}=0}}^{n}p_{j}^{-1}\prod_{\substack{ j=N+1  \\ v_{j}=1}}%
^{n}(1-p_{j})^{-1}
\end{equation*}%
and 
\begin{equation*}
\frac{\left\vert I_{v}\right\vert }{\left\vert I_{u}\right\vert }=\prod 
_{\substack{ j=N+1  \\ v_{j}=0}}^{n}a_{j}^{-1}\prod_{\substack{ j=N+1  \\ %
v_{j}=1}}^{n}b_{j}^{-1},
\end{equation*}%
so 
\begin{equation*}
\frac{\frac{\mu _{\omega }(I_{v})}{\mu _{\omega }(I_{u})}}{\left( \frac{%
\left\vert I_{v}\right\vert }{\left\vert I_{u}\right\vert }\right) ^{\alpha }%
}=\prod_{\substack{ j=N+1  \\ v_{j}=0}}^{n}\frac{p_{j}^{-1}}{a_{j}^{-\alpha }%
}\prod_{\substack{ j=N+1  \\ v_{j}=1}}^{n}\frac{(1-p_{j})^{-1}}{%
b_{j}^{-\alpha }}.
\end{equation*}

Almost surely, $\alpha \geq \log p_{j}/\log a_{j}$ and $\alpha \geq \log
(1-p_{j})/\log b_{j}$ for all $j,$ hence $a_{j}^{-\alpha }\geq p_{j}^{-1}$
and $b_{j}^{-\alpha }\geq (1-p_{j})^{-1}$ a.s. Thus 
\begin{equation*}
\frac{\mu _{\omega }(I_{v})}{\mu _{\omega }(I_{u})}\leq \left( \frac{%
\left\vert I_{v}\right\vert }{\left\vert I_{u}\right\vert }\right) ^{\alpha }%
\text{ a.s.}
\end{equation*}%
and consequently $\ \overline{\dim }_{\Phi }\mu \leq \alpha $ a.s.

\smallskip

For the reverse inequality, first suppose $\alpha <\infty $. Fix $i\in 
\mathbb{N}$. Without loss of generality, we will assume 
\begin{equation*}
\alpha =\esssup \left( \frac{\log p_{1}}{\log a_{1}}\right) =\esssup \left( 
\frac{\log p_{1}^{-1}}{\log a_{1}^{-1}}\right) .
\end{equation*}%
From the definition of the essential supremum, there must be some $0<\delta
_{i}<1$ such that 
\begin{equation*}
\mathcal{P}\left( \omega :\frac{\log p_{1}^{-1}}{\log a_{1}^{-1}}\geq \alpha
-\frac{1}{2i}\right) \geq \delta _{i}\text{.}
\end{equation*}

Let 
\begin{equation*}
J_{i}=\frac{|\log B|}{2|\log \delta _{i}|}\text{ and }\Phi _{i}(t)=\frac{%
J_{i}\log |\log t|}{|\log t|}.
\end{equation*}%
For each positive integer $N,$ let 
\begin{equation*}
\chi _{N,i}^{{}}=\frac{J_{i}\log (N|\log A|)}{|\log B|}.
\end{equation*}%
Clearly, $\chi _{N,i}^{{}}\rightarrow \infty $ as $N\rightarrow \infty $ and
the definitions ensure that $\delta _{i}^{\chi _{N,i}}\geq 1/N$ for large
enough $N$. Set 
\begin{equation*}
\Gamma _{N,i}=\left\{ \omega :\frac{\log p_{j}^{-1}}{\log a_{j}^{-1}}\geq
\alpha -\frac{1}{2i}\text{ for }j=N+1,...,N+\chi _{N,i}\right\} .
\end{equation*}%
As the tuples $(p_{n},a_{n},b_{n})$ are independent, 
\begin{equation*}
\mathcal{P}\left( \Gamma _{N,i}\right) =\prod_{j=N+1}^{N+\chi _{N,i}}%
\mathcal{P}\left( \omega :\frac{\log p_{j}^{-1}}{\log a_{j}^{-1}}\geq \alpha
-\frac{1}{2i}\right) =\delta _{i}^{\chi _{N,i}}\geq \frac{1}{N}\text{.}
\end{equation*}%
Thus if we let $N_{k}=k\log k,$ then for some suitably large $K_{0},$ 
\begin{equation*}
\sum_{k}\mathcal{P}(\Gamma _{N_{k},i}^{{}})\geq \sum_{k\geq K_{0}}\frac{1}{%
k\log k}=\infty .
\end{equation*}

As we can replace $\delta _{i}$ with any smaller, strictly positive number,
there is no loss of generality in assuming it is so small that $%
N_{k+1}>N_{k}+\chi _{N_{k},i}$. Hence the events $\Gamma _{N_{k},i}$ are
independent and thus the Borel-Cantelli lemma implies that $\mathcal{P}%
(\Gamma _{N_{k},i}^{{}}$ i.o.$)=1$ for each (fixed) $i$. Let $\Gamma _{i}$
be this set of full measure.

Take any $\omega \in \Gamma _{i}$ and consider any Moran interval, $%
I_{N}(\omega ),$ of step $N$. Let $I_{n}(\omega )$ be the left-most
descendent of $I_{N}$ at level $n=N+$ $\chi _{N,i}^{{}}$ (where we make the
choice of the left descendent since $\alpha =\esssup$ $\log p_{1}/\log a_{1}$%
). Since $\left\vert I_{N}\right\vert \geq A^{N}$ and the function $t^{\Phi
_{i}(t)}$ decreases as $t$ decreases to $0,$ the choice of $\chi _{N,i}^{{}}$
ensures 
\begin{equation*}
\left\vert I_{N}\right\vert ^{\Phi _{i}\left( \left\vert I_{N}\right\vert
\right) }\geq A^{N\Phi _{i}(A^{N})}=A^{\frac{J_{i}\log (N\left\vert \log
A\right\vert )}{\left\vert \log A\right\vert }}\geq B^{\chi _{N,i}^{{}}}\geq 
\frac{\left\vert I_{n}\right\vert }{\left\vert I_{N}\right\vert }.
\end{equation*}%
Hence $\left\vert I_{n}\right\vert \leq \left\vert I_{N}\right\vert ^{1+\Phi
_{i}\left( \left\vert I_{N}\right\vert \right) }$. As $\omega \in \Gamma
_{i},$ it follows that for infinitely many $N,$%
\begin{equation*}
\log p_{j}^{-1}\geq (\log a_{j}^{-1})(\alpha -\frac{1}{2i})\text{ for }%
j=N+1,...,N+\chi _{N,i},
\end{equation*}%
equivalently, 
\begin{equation*}
p_{j}^{-1}\geq a_{j}^{-(\alpha -1/(2i))}.
\end{equation*}%
Thus 
\begin{equation*}
\frac{p_{j}^{-1}}{a_{j}^{-(\alpha -1/i)}}\geq a_{j}^{-1/(2i)}\geq
B^{-(1/(2i))}\text{ for }j=N+1,...N+\chi _{N,i}\text{.}
\end{equation*}%
Consequently, for each fixed $i,$%
\begin{equation*}
\frac{\frac{\mu _{\omega }(I_{N})}{\mu _{\omega }(I_{n})}}{\left( \frac{%
\left\vert I_{N}\right\vert }{\left\vert I_{n}\right\vert }\right) ^{\alpha
-1/i}}=\prod_{j=N+1}^{N+\chi _{N,i}^{{}}}\frac{p_{j}^{-1}}{a_{j}^{-(\alpha
-1/i)}}\geq (B^{-1/(2i)})^{\chi _{N,i}}\text{.}
\end{equation*}%
and since $(B^{-1/(2i)})^{\chi _{N,i}}$ as $N\rightarrow \infty $ it follows
that there can be no constant $C$ such that 
\begin{equation*}
\frac{\mu _{\omega }(I_{N})}{\mu _{\omega }(I_{n})}\leq C\left( \frac{%
\left\vert I_{N}\right\vert }{\left\vert I_{n}\right\vert }\right) ^{\alpha
-1/i}
\end{equation*}%
for all such $N,n$. By Lemma \ref{lem:intervals_give_dimension} that implies 
$\overline{\dim }_{\Phi _{i}}\mu _{\omega }\geq \alpha -1/i$ for all $\omega
\in \Gamma _{i}$.

Let $\Gamma =\bigcap_{i=1}^{\infty }\Gamma _{i}$, a set of full measure, and
assume $\Phi $ is any small dimension function. Then there is some function $%
H(t)\rightarrow 0$ as $t\rightarrow 0$ so that 
\begin{equation*}
\Phi (t)\leq \frac{H(t)\log |\log t|}{|\log t|}\text{ for all }t\leq t_{0}.
\end{equation*}

Consequently, for each $i$ there is some $t_{i}>0$ such that $\Phi (t)\leq
\Phi _{i}(t)$ for all $t\leq t_{i}$. This property and our observations
above ensure that $\overline{\dim }_{\Phi }\mu _{\omega }\geq $ $\overline{%
\dim }_{\Phi _{i}}\mu _{\omega }\geq \alpha -1/i$ for all $i$ and all $%
\omega \in \Gamma $. We conclude that $\overline{\dim }_{\Phi }\mu _{\omega
}\geq \alpha $ for all $\omega \in \Gamma $, as we desired to show.

If $\alpha =\infty $, replacing `$\alpha -1/(2i)$' in the arguments with `$%
2i $'$,$ in the same manner we deduce that for every $i\in \mathbb{N}$ and
infinitely many $N,$ 
\begin{equation*}
\frac{\frac{\mu _{\omega }(I_{N})}{\mu _{\omega }(I_{n})}}{\left( \frac{%
\left\vert I_{N}\right\vert }{\left\vert I_{n}\right\vert }\right) ^{i}}\geq
B^{-i\chi _{N,i}}
\end{equation*}%
and, of course, this tends to infinity as $N\rightarrow \infty $. It follows
that $\overline{\dim }_{\Phi _{i}}\mu _{\omega }\geq i$ for all $\omega \in
\Gamma _{i}$, a set of full measure and with similar reasoning to above, we
deduce that $\overline{\dim }_{\Phi }\mu _{\omega }=\infty $ a.s.

If, instead, $\alpha = \esssup \left( \log (1-p_{1})/\log b_{1}\right) ,$ we
consider a Moran interval of level $N$ and its right-most descendent at
level $N+\chi _{N,i}^{{}},$ and argue in a similar fashion.

\smallskip

The arguments to establish $\underline{\dim }_{\Phi }\mu = \beta $ a.s. are
analogous and left to the reader.
\end{proof}

\begin{corollary}
Almost surely, $\dim _{A}\mu _{\omega }=\alpha $ and $\dim _{L}\mu _{\omega
}=\beta $.
\end{corollary}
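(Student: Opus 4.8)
The plan is to deduce this corollary immediately from the preceding theorem by testing it against one conveniently chosen small dimension function and then invoking part (iv) of the Proposition in Section~\ref{sec:phidim}, which identifies the $\Phi$-dimensions with the Assouad and lower Assouad dimensions for sufficiently small $\Phi$.

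First I would fix the particular function $\Phi_0(t)=1/|\log t|$ and check that it is a legitimate dimension function: for $0<t<1$ we have $t^{1+\Phi_0(t)}=t\cdot t^{1/|\log t|}=t/e$, which decreases to $0$ as $t\downarrow 0$. Moreover $\Phi_0$ is \emph{small}, since writing $\Phi_0(t)=H(t)\log|\log t|/|\log t|$ forces $H(t)=1/\log|\log t|\to 0$ as $t\to 0$. Hence the theorem applies to $\Phi_0$, furnishing a set $\Gamma$ of full measure on which $\overline{\dim}_{\Phi_0}\mu_\omega=\alpha$ and $\underline{\dim}_{\Phi_0}\mu_\omega=\beta$.

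Next I would apply part (iv) of the Proposition in Section~\ref{sec:phidim}, which states that whenever $\Phi(t)\leq 1/|\log t|$ for $t$ near $0$ one has $\overline{\dim}_\Phi\mu=\dim_A\mu$ and $\underline{\dim}_\Phi\mu=\dim_L\mu$. Since $\Phi_0(t)=1/|\log t|$ meets this bound with equality, we obtain $\dim_A\mu_\omega=\overline{\dim}_{\Phi_0}\mu_\omega$ and $\dim_L\mu_\omega=\underline{\dim}_{\Phi_0}\mu_\omega$ for every $\omega$. Combining the two identifications on $\Gamma$ then gives $\dim_A\mu_\omega=\alpha$ and $\dim_L\mu_\omega=\beta$ for all $\omega\in\Gamma$, which is the assertion.

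There is essentially no obstacle: the full content of the corollary is already carried by the theorem, and the only thing to verify is that the single function $\Phi_0(t)=1/|\log t|$ simultaneously satisfies the smallness hypothesis of the theorem and the bound $\Phi_0(t)\leq 1/|\log t|$ required by part (iv) of the Proposition. Both checks are immediate, so the corollary is a one-line consequence once those two prior results are in hand.
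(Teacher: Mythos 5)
Your proof is correct. The paper's own proof is even more direct: it observes that the constant function $\Phi \equiv 0$ is itself a small dimension function (take $H\equiv 0$), and by Remark (i) of Section \ref{sec:phidim} the $\Phi$-dimensions with $\Phi=0$ \emph{are} by definition $\dim_A\mu$ and $\dim_L\mu$, so the corollary follows by plugging $\Phi=0$ into the theorem. Your route---testing the theorem on $\Phi_0(t)=1/|\log t|$, verifying it is a small dimension function, and then invoking part (iv) of the Proposition in Section \ref{sec:phidim} to identify $\overline{\dim}_{\Phi_0}\mu$ with $\dim_A\mu$ and $\underline{\dim}_{\Phi_0}\mu$ with $\dim_L\mu$---reaches the same conclusion with one extra, previously established, ingredient. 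A minor side benefit of your version is that it only ever uses a strictly positive dimension function, so it is insensitive to whether the codomain $\mathbb{R}^+$ in the definition of a dimension function is read as allowing the value $0$; the paper's choice $\Phi=0$ implicitly relies on that reading (which the paper clearly intends, since it lists the constants $\Phi(t)=\delta\geq 0$ among its examples). Both arguments are one-liners once the small-$\Phi$ theorem is in hand.
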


\begin{proof}
This is immediate from the theorem as the constant function $\Phi =0$ is a
small dimension function.
\end{proof}

Let 
\begin{eqnarray*}
a_{0} &=&\essinf a_{1}(\omega )\text{, }b_{0}=\essinf b_{1}(\omega )\text{, }
\\
A_{0} &=&\esssup a_{1}(\omega )\text{, }B_{0}=\esssup b_{1}(\omega ),
\end{eqnarray*}%
\begin{eqnarray*}
p_{0} &=&\esssup p_{1}(\omega )\text{, }q_{0}=\esssup(1-p_{1}(\omega ))\text{%
,} \\
\text{ }P_{0} &=&\essinf p_{1}(\omega )\text{, }Q_{0}=\essinf %
(1-p_{1}(\omega )).
\end{eqnarray*}

and put 
\begin{equation*}
\alpha ^{\prime }=\max \left( \frac{\log P_{0}}{\log A_{0}},\frac{\log Q_{0}%
}{\log B_{0}}\right) \text{, }\beta ^{\prime }=\min \left( \frac{\log p_{0}}{%
\log a_{0}},\frac{\log q_{0}}{\log b_{0}}\right) .
\end{equation*}%
Notice that if $p_{n}$ is chosen independently of $(a_{n},b_{n})$ for all $n$%
, then $\alpha ^{\prime }=\alpha $ and $\beta ^{\prime }=\beta $.
Consequently, another immediate consequence of the theorem is

\begin{corollary}
Suppose $p_{n}$ is chosen independently of $(a_{n},b_{n})$ for all $n$.
There is a set $\Gamma $ of full measure, such that $\overline{\dim }_{\Phi
}\mu _{\omega }=\alpha ^{\prime }$ and $\underline{\dim }_{\Phi }\mu
_{\omega }=\beta ^{\prime }$ for all $\omega \in \Gamma $ and for all small
dimension functions $\Phi $.
\end{corollary}

\begin{example}
Consider, again, the Moran set $\mathcal{C}_{ab}$ and random measure $\mu
_{\omega }$ with probabilities chosen with equal likelihood from $\{p,1-p\}$
with $a<b$ and $p<1-p,$ as in Example \ref{Cab}. Then $\overline{\dim }%
_{\Phi }\mu _{\omega }=\log p/\log b$ and \underline{$\dim $}$_{\Phi }\mu
_{\omega }=\log (1-p)/\log a$ almost surely.
\end{example}

\begin{example}
In both examples \ref{exa,b} and \ref{exUnif}, it is trivial to compute the $%
\Phi $-dimensions for small $\Phi $. In both cases $a_{0}=b_{0}=A$, $%
A_{0}=B_{0}=B$ and $p_{0}=q_{0}=P_{0}=Q_{0}=1/2$. Thus $\overline{\dim }%
_{\Phi }\mu_{\omega} =\frac{\log 1/2}{\log B}$ and \underline{$\dim $}$%
_{\Phi }\mu_{\omega} =\frac{\log 1/2}{\log A}$ almost surely.
\end{example}

\begin{remark}
As in Subsection \ref{sec:multi}, suppose that each parent interval in the
Moran set construction has $K\geq 2$ children and define a random Moran set $%
\mathcal{C}_{\omega }$ and measure $\mu _{\omega }$ as was done there (with
the same assumptions). With the notation of that subsection, for $j=1,...,K$
put

\begin{eqnarray*}
\alpha &=&\max_{j=1,...,K}\esssup\left( \frac{\log p_{j}(\omega )}{\log
a_{j}(\omega )}\right) , \\
\beta &=&\min_{j=1,...,K}\essinf\left( \frac{\log p_{j}(\omega )}{\log
a_{j}(\omega )}\right) .
\end{eqnarray*}%
The same reasoning as in the proof of the theorem shows $\overline{\dim }%
_{\Phi }\mu _{\omega }=\alpha $ and $\underline{\dim }_{\Phi }\mu _{\omega
}=\beta $ for almost all $\omega $ and for all small dimension functions $%
\Phi $.
\end{remark}

\subsection{Relating $\dim_{\Phi} \protect\mu$ to $\dim_{\Phi} {\mathcal{C}}_%
\protect\omega$}

\label{sec:small_dim_underlying}


As in the case of the large dimension functions, it is natural to ask if one
could obtain the almost sure $\Phi $-dimensions of these random Moran sets
as the almost sure $\Phi $-dimensions of the random measures arising from 
\textit{some} choice of probabilities, as is the case when $a_{n}(\omega
)=b_{n}(\omega )$ for all $n$ and $\omega $ (see \cite{HM}). The following
example shows that this need not be the case for the small dimension
functions $\Phi $ when the random set is not generated by equicontractive
similarities.

\begin{example}
Choose $0<a_{2}<a_{1}<1/2$, \thinspace $0<b_{1}<b_{2}<1/2$ and consider the
family of random Moran sets $C_{\omega }$ where we choose $(a_{n},b_{n})$
independently and with equal likelihood from $\{(a_{1},b_{1}),$ $%
(a_{2},b_{2})\}$. We will let $\mathcal{C}_{1},\mathcal{C}_{2}$ denote the
(deterministic) Moran sets generated by $(a_{1},b_{1})$ and $(a_{2},b_{2})$
respectively. It is known \cite[Thm. 2.6]{FMT} that 
\begin{equation*}
\dim _{A}\mathcal{C}_{\omega }=\max \left\{ \dim _{A}\mathcal{C}_{1},\dim
_{A}\mathcal{C}_{2}\right\} \text{ a.s.}
\end{equation*}%
and that $\dim _{A}\mathcal{C}_{j}$ is the value of $d_{j}$ satisfying $%
a_{j}^{d_{j}}+b_{j}^{d_{j}}=1$ for $j=1,2$.

Let $0\leq p\leq q\leq 1$ and for convenience let%
\begin{equation*}
\lambda (p,q)=\max \left\{ \frac{\log p}{\log a_{1}},\frac{\log (1-q)}{\log
b_{2}}\right\} .
\end{equation*}%
Theorem \ref{small} shows that if we denote by $\mu _{\omega }=\mu _{\omega
}(p,q)$ the random measures supported on the Moran sets $\mathcal{C}_{\omega
}$ where we choose probabilities $p_{n}$ independently and with equal
likelihood from $\{p,q\}$, then for each $p,q$%
\begin{equation*}
\dim _{A}\mu _{\omega }(p,q)=\lambda (p,q)\text{ a.s.}
\end{equation*}%
Notice that a compactness argument ensures that $\inf_{0\leq p\leq q\leq
1}\lambda (p,q)=$ $\lambda (p_{0},q_{0})$ for a suitable choice of $%
p_{0},q_{0}$.

We will see that 
\begin{equation}
\dim _{A}\mathcal{C}_{\omega }<\lambda (p_{0},q_{0})\text{ a.s.}  \label{*}
\end{equation}

To prove this, we first note that at the minimal value of $\lambda (p,q)$ we
must have $\log p_{0}/\log a_{1}=\log (1-q_{0})/\log b_{2}$. Moreover, as
the function $\log p/\log a_{1}$ decreases as $p$ increases and the function 
$\log (1-q)/\log b_{2}$ decreases as $q$ decreases, the minimum value occurs
when $p_{0}=q_{0}$, hence at a choice of $p_{0}$ where 
\begin{equation*}
\frac{\log p_{0}}{\log a_{1}}=\frac{\log (1-p_{0})}{\log b_{2}}.
\end{equation*}%
If we suppose $\gamma $ is chosen so that $b_{2}=a_{1}^{\gamma },$ solving
the equation above gives that $p_{0}$ satisfies $p_{0}^{\gamma }+p_{0}=1$
or, equivalently, $p_{0}=(1-p_{0})^{1/\gamma }$. To summarize, 
\begin{equation*}
\lambda (p_{0},p_{0})=\frac{\log p_{0}}{\log a_{1}}=\frac{\log (1-p_{0})}{%
\log b_{2}}\text{ where }p_{0}^{\gamma }+p_{0}=1.
\end{equation*}

Now assume $\kappa $ is chosen so that $b_{1}=a_{1}^{\kappa }$. As $%
b_{1}<b_{2}$, we must have $\kappa >\gamma $. Furthermore, $d_{1}=\dim _{A}%
\mathcal{C}_{1}$ is defined by the rule $1=a_{1}^{d_{1}}+(a_{1}^{d_{1}})^{%
\kappa }$. Since $\kappa >\gamma ,$ if $a_{1}^{d_{1}}\leq p_{0}$ we obtain
the contradiction 
\begin{equation*}
1=a_{1}^{d_{1}}+(a_{1}^{d_{1}})^{\kappa }\leq p_{0}+p_{0}^{\kappa
}<p_{0}+p_{0}^{\gamma }=1.
\end{equation*}%
Hence $a_{1}^{d_{1}}>p_{0}$ and thus 
\begin{equation*}
\dim _{A}\mathcal{C}_{1}=d_{1}<\frac{\log p_{0}}{\log a_{1}}.
\end{equation*}

Likewise, if we assume $a_{2}^{\eta }=b_{2},$ then as $a_{1}>a_{2}$ we must
have $1/\eta >1/\gamma $. And as $d_{2}$ is defined by the rule $1=\left(
b_{2}^{d_{2}}\right) ^{1/\eta }+b_{2}^{d_{2}},$ it similarly follows that $%
b_{2}^{d_{2}}>1-p_{0},$ that is 
\begin{equation*}
\dim _{A}\mathcal{C}_{2}=d_{2}<\frac{\log (1-p_{0})}{\log b_{2}}.
\end{equation*}%
These observations prove (\ref{*}).

Since $\dim _{A}\mu _{\omega }=\overline{\dim }_{\Phi }\mu _{\omega }$ a.s.
for all small $\Phi ,$ and $\dim _{A}\mathcal{C}_{\omega }\geq \overline{%
\dim }_{\Phi }\mathcal{C}_{\omega }$ for all $\omega $ and $\Phi ,$ this
also proves that for all small dimension functions $\Phi $ and for each $p,q$
\begin{equation*}
\dim _{\Phi }\mathcal{C}_{\omega }<\frac{\log p_{0}}{\log a_{1}}\leq 
\overline{\dim }_{\Phi }\mu _{\omega }(p,q)\text{ a.s.}
\end{equation*}%
where $p_{0}$ is given by the rule $p_{0}+p_{0}^{\gamma }=1$ and $\gamma
=\log b_{2}/\log a_{1}$.

For an explicit example, suppose $a_{1}=1/2$, $b_{1}=1/4$, $a_{2}=1/3=b_{2}$%
. Then 
\begin{equation*}
\dim _{A}\mathcal{C}_{1}=\frac{\log (\left( \sqrt{5}-1\right) /2)}{\log 1/2}%
\approx .69\text{ and }\dim _{A}\mathcal{C}_{2}=\frac{\log 2}{\log 3}\approx
.63
\end{equation*}%
so 
\begin{equation*}
\dim _{A}\mathcal{C}_{\omega }=\dim _{A}\mathcal{C}_{1}\approx .69\text{ a.s.%
}
\end{equation*}%
We have $b_{2}=a_{1}^{\gamma }$ for $\gamma =\log 3/\log 2$ and Maple gives
the approximate solution to $p_{0}^{\gamma }+p_{0}=1$ as $p_{0}\approx .58.$
Hence for each choice of $p,q,$ 
\begin{equation*}
\dim _{A}\mu _{\omega }(p,q)\geq \frac{\log p_{0}}{\log 1/2} \approx .78%
\text{ a.s.}
\end{equation*}
\end{example}

\section*{Appendix: Examples of $G(\protect\theta)$}

Even though the function $G(\theta)$ is only used as a technical tool for
proving our results, it is helpful to see a few plots of some examples.
Referring to Figure \ref{fig:Gtheta}, we present our four examples starting
with the first row and going left to right. We comment that we used Maple
for both the computations and for the plots.

For the first example, we set $b=2a$ and choose $a$ uniformly from the
interval $[\frac{1}{10},\frac{3}{10}]$. In addition, $p$ is chosen uniformly
from $[0,1]$. The resulting function $G(\theta )$ is clearly smooth, but not
monotone.

In the second, we now set $b = 50 a$ and choose $a$ uniformly in the range $%
1/100 \le a \le (1 - 1/100)/51$. Again $p$ is chosen uniformly from $[0,1]$.
This time $G(\theta)$ is monotone increasing, but still smooth.

For our third example, we again set $b = 2 a$ and choose $a$ uniformly from $%
[\frac{1}{10}, \frac{3}{10}]$. However, this time $p$ is chosen uniformly
from the set $[0,\frac{1}{10}] \cup [ \frac{1}{5}, \frac{2}{5}] \cup [\frac{7%
}{10}, 1]$. The function is continuous, but only piecewise smooth and not
monotone. \emph{Note that the vertical axis is different than in the other
plots.} This was done in order to more clearly show the shape of the graph.

Finally, for our fourth example we take the 9 triples $(a,b,p)$ 
\begin{align*}
\left( \frac{2}{25}, \frac{18}{25}, \frac{1}{18} \right), \left( \frac{4}{25}%
, \frac{16}{25}, \frac{1}{8} \right), \left( \frac{6}{25}, \frac{14}{25}, 
\frac{3}{14} \right), \left( \frac{8}{25}, \frac{12}{25}, \frac{1}{3}
\right), \left( \frac{10}{25}, \frac{10}{25}, \frac{1}{2} \right), \\
\left( \frac{12}{25}, \frac{8}{25}, \frac{5}{6} \right), \left( \frac{14}{25}%
, \frac{6}{25}, \frac{5}{7} \right), \left( \frac{16}{25}, \frac{4}{25}, 
\frac{5}{8} \right), \left( \frac{18}{25}, \frac{2}{25}, \frac{5}{9}
\right), \hskip 1 cm
\end{align*}
which each define an IFS with probabilities. For our random model, at each
level we choose one of these IFSPs equally likely. As is clear from the
plot, in this case $G(\theta)$ is discontinuous, piecewise constant, and
non-monotone. The discontinuities occur at $\theta = \log(p/(1-p))/\log(a/b)$
for the various choices of $a$, $b$ and $p$.

\begin{figure}[tbp]
\includegraphics[width=2 in]{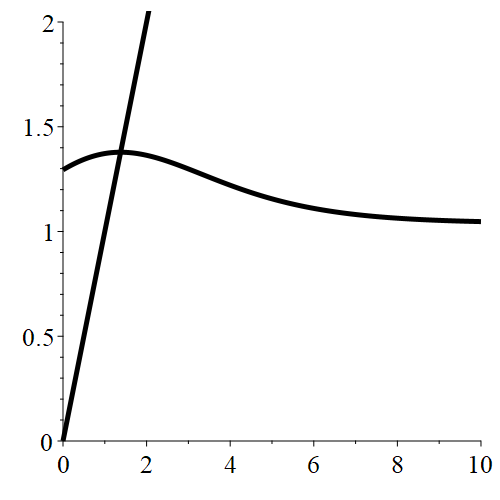} 
\includegraphics[width=2
in]{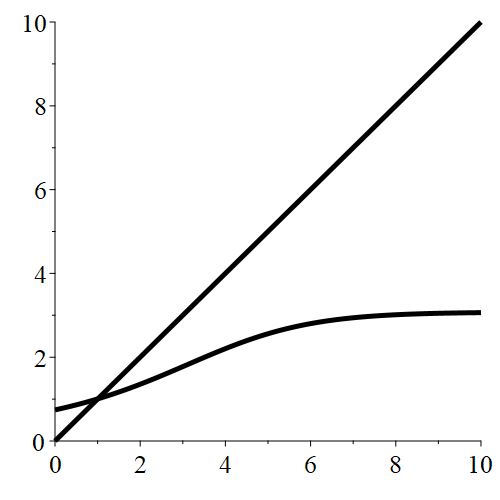}
\par
\includegraphics[width=2 in, height = 2 in]{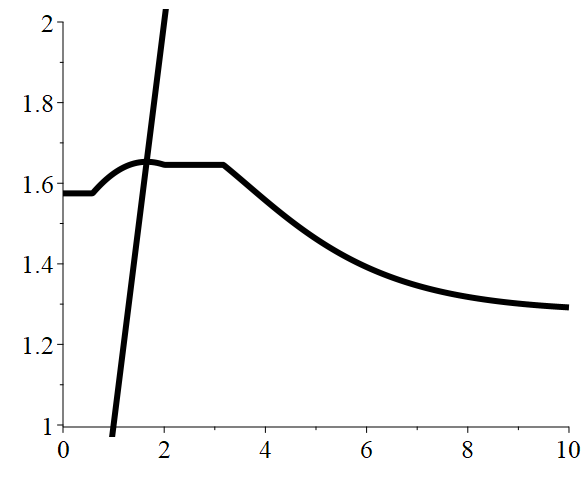} %
\includegraphics[width=2 in]{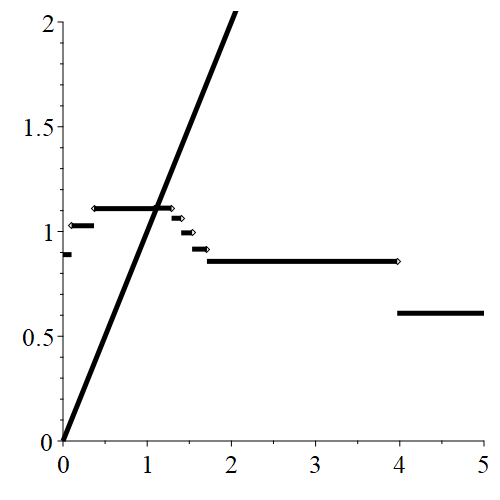}
\caption{Four different examples of $G(\protect\theta)$, described in the
appendix.}
\label{fig:Gtheta}
\end{figure}

\end{document}